\documentclass[a4paper,12pt,leqno]{article}
\usepackage[a4paper,centering,vscale=0.77,hscale=0.77]{geometry}
\usepackage{amsmath,amsthm,amssymb,bbm,bm}
\usepackage{xcolor}
\usepackage{verbatim}
\usepackage{todonotes,ulem}
\usepackage{mathrsfs}

%\usepackage{showkeys}
%pacchetti etichette/hyperlinks
\usepackage{thmtools}
\usepackage[pdfdisplaydoctitle,colorlinks,breaklinks,urlcolor=blue,linkcolor=blue,citecolor=blue]{hyperref}
\usepackage{enumitem}   
\numberwithin{equation}{section}

\newtheorem{theorem}{Theorem}[section]
\newtheorem{lemma}[theorem]{Lemma}
\newtheorem{proposition}[theorem]{Proposition}

\newtheorem{definition}[theorem]{Definition}
\newtheorem{remark}[theorem]{Remark}
\newtheorem{assumption}[theorem]{Assumption}

\newcommand{\df }{\mathrm{d}}
\newcommand{\im }{\mathrm{i}}
\newcommand{\Real}{\operatorname{Re}}
\newcommand{\dualityReal}[2]{\Real \langle #1, #2 \rangle}
\newcommand{\productReal}[2]{\Real \left( #1, #2 \right)_H}
\newcommand{\Prob}{\mathbb{P}}
\newcommand{\Filtration}{\mathbb{F}}
\newcommand{\E}{\mathbb{E}}
\newcommand{\F}{\mathscr{F}}
\newcommand{\EA}{{V}}
\newcommand{\EAdual}{{V^*}}

\title{Stationary solutions for the nonlinear Schr\"odinger~equation}
\author{Benedetta Ferrario\thanks{Dipartimento di Scienze Economiche e Aziendali, Universit\`a di Pavia, 27100 Pavia, Italy  \textit{E-mail address}: benedetta.ferrario@unipv.it }  
   	\and Margherita Zanella\thanks{\textit{Corresponding author.} Dipartimento di Matematica "Francesco Brioschi", Politecnico di Milano, Via Bonardi 13, 20133 Milano, Italy  \textit{E-mail address}: margherita.zanella@polimi.it	}		}
\date{\today}

\begin{document}

\maketitle

\tableofcontents

\begin{abstract}
We construct stationary statistical solutions of a  deterministic unforced nonlinear Schr\"o\-din\-ger equation, 
by perturbing it by a linear damping $\gamma u$ and a stochastic force whose intensity is proportional to $\sqrt \gamma$, and then letting  $\gamma\to 0^+$. 
We prove indeed that the family of stationary solutions $\{U_\gamma\}_{\gamma>0}$
of the  perturbed equation possesses an accumulation point for any vanishing sequence  $\gamma_j\to 0^+$ and this stationary limit solves 
the deterministic unforced  nonlinear Schr\"o\-din\-ger equation and  is not a   trivial process.
This technique has been introduced in  \cite{KuksinS}, using a different  dissipation. 
However, considering a  linear damping of zero order and weaker solutions, we can deal with larger  ranges of the nonlinearity and of the spatial dimension; moreover we  consider  the focusing equation and the defocusing 
 equation as well.
\end{abstract}

{\bf MSC}:  
35Q55,  %  	NLS equations (nonlinear Schr\"odinger equations) 
35R60, %   	PDEs with randomness, stochastic partial differential equations 
60H30, %   	Applications of stochastic analysis (to PDEs, etc.)
60G10, %  	Stationary stochastic processes
60H15. %   	Stochastic partial differential equations (aspects of stochastic analysis)

{\bf Key words}: nonlinear Schr\"odinger equation - stationary solutions - 
inviscid limit
\bigskip

%%  INTRODUCTION  %%
\section{Introduction}

The  nonlinear Schr\"odinger (NLS) equation occurs as a basic model in many areas of physics: 
hydrodynamics, plasma physics, optics, molecular biology, chemical reaction, etc. 
It describes the propagation of waves in media with both nonlinear and dispersive responses (see, e.g., \cite{Cazenave,Sulem} and references therein). 
When the nonlinearity is of polynomial type it reads as 
\begin{equation} \label{eq0}
\partial_t u+ \im \Delta  u+\im \alpha |u|^{2\sigma} u = f
\end{equation}
where the unknown is $u=u(t,x):\mathbb R\times D \to\mathbb C$ with $D\subset\mathbb R^d$. 
$\Delta$ is the spatial Laplacian and 
the parameters are  $\sigma\ge 0$ and
$\alpha \in \{-1,1\}$;  for
$\alpha=1$ this is called the focusing equation and for $\alpha=-1$ this is  the defocusing one.
 
Fractional powers of the Laplacian have been introduced by Laskin \cite{La}.
Hence we consider the more general equation
\begin{equation}\label{eq-beta}
\partial_t u- \im (-\Delta)^\beta  u+\im \alpha |u|^{2\sigma} u= f
\end{equation}
for $\beta>0$.
When there is no forcing term ($f=0$), 
for smooth enough solutions there are two conserved quantities: the mass and the energy.

Attempts to define invariant Gibbs measures by means of these  conserved quantities for the NLS equation 
\eqref{eq0}
 have been done by 
Bourgain  in \cite{Bourgain96} for the cubic defocusing case and by 
Tzvetkov  in \cite{Tzvetkov06} for the sub-cubic (focusing or defocusing)  case  and 
in  \cite{Tzvetkov} for the 
 defocusing sub-quintic case.
 A recent preprint by Casteras and Monsaingeon \cite{CM} deals with the 
 fractional Schr\"odinger equation with a Moser-Trudinger type nonlinearity.
It is well known that this approach to define  stationary (in time) 
statistical solutions presents several difficult issues.

On the other side, one can look for stationary statistical solutions of  the unforced NLS equation
by means of stochastic analysis as done by Kuskin and collaborators. Their technique 
works for  infinite dimensional Hamiltonian systems with at least two invariant quantities;  in \cite{K04}
Kuksin started by considering  the equations of fluid dynamics developed 
in many papers  by Kuksin and collaborators  later on (see also \cite{Fer}
for the the equations of fluid dynamics with fractional dissipation), then  they dealt  with 
the KdV  equation (see \cite{KP-KdV,Kuksin-KdV}) 
as well as the NLS equation    (see \cite{Kuksin99,KuksinS,Kuksin,Sh11}).  
 
The technique is the following. First,  equation  \eqref{eqS} is perturbed 
by adding a dissipative term and a stochastic force. In this paper we consider a linear dissipative
term of zero  order, called  damping term from now on, i.e.
\begin{equation}\label{eqsS}
\partial_t u- \im (-\Delta)^\beta  u+\im \alpha |u|^{2\sigma} u +\gamma u
= \sqrt \gamma\Phi  \partial_t W,
\end{equation}
for $\gamma>0$. 
The right hand side  represents a random force which is white in time and smooth in 
space, with the spatial regularity  depending on the  operator $\Phi$. When $\gamma=0$ we recover 
the unforced NLS equation
\begin{equation}\label{eqS}
\partial_t u- \im (-\Delta)^\beta   u+\im \alpha |u|^{2\sigma} u = 0.
\end{equation}
Since the perturbing terms produce the stochastic  linear  equation
\begin{equation}
\partial_t  u+\gamma u
= \sqrt \gamma\Phi  \partial_t  W,
\end{equation}
for which an invariant measure is known to exists (a Gaussian measure, independent of $\gamma$), it is natural to 
 expect that there exist stationary solutions of the damped  and driven NLS equation \eqref{eqsS}. 
 This will indeed  be proved as well as  bounds of the moments, independent of $\gamma$.
Then, given a family $\{U_\gamma\}_{\gamma>0}$ of stationary solutions for the perturbed equation \eqref{eqsS}
one considers the limit  as $\gamma\to 0^+$.
The main result consists in proving that 
 the limit,  obtained considering  any vanishing (sub)sequence $\gamma_j\to 0$,  is a stationary solution obeying the 
deterministic unforced equation \eqref{eqS}, and this process is not a trivial solution.

\smallskip
The contents of the paper are structured as follows. In Section \ref{s-2} we introduce the mathematical setting, the assumptions and the basic results on the NLS. 
In Section \ref{sect-existence} we prove the existence  of stationary 
martingale solutions for the stochastic damped NLS equation \eqref{eqsS}. 
Let us point out that in the literature there are  stronger results, i.e.  on  invariant measures instead of 
stationary solutions. In particular when $D=\mathbb R^d$ the existence of invariant measures has been proven in  
\cite{Ekren_2017,Kim} and the uniqueness for large damping $\gamma$ in \cite{large-damping}. 
In a bounded domain $D$ there are less results;  for $d=1$ Debussche and Odasso \cite{DebO} have proven the 
existence of a unique invariant measure when the noise is additive, $\sigma=1$ and $\alpha=-1$; for $d=2$, $\alpha=1$ 
and in a more general setting for the noise there is our  existence result with Brze{\'{z}}niak 
 \cite{noi2d}. 
Therefore our results of Section \ref{sect-existence} on the existence of stationary  martingale solutions are new 
if $d\ge 3$ for the defocusing equation and if $d \ge 2$ for the focusing equation. Moreover we provide 
precise moment estimates for the mass and the energy.
In Section \ref{il} we consider the limit of vanishing damping. We prove that
any vanishing sequence has a subsequence converging, as $\gamma_j\to 0$, to a stochastic process $U$, 
which is a  non-trivial stationary solution of the deterministic unforced Schr\"odinger equation \eqref{eqS}.
 Indeed the mean of its mass is expressed in terms of the spatial covariance of the noise term.
Hence  noises with different covariances provide different limits.  Moreover, the random variable $U(t)$ (at fixed time $t$) is not a trivial random variable and  for a full noise its mass is absolutely continuous with respect to the Lebesgue measure on $[0,+\infty)$.
Section \ref{more_reg_sec} considers more regular solutions %in dimension $d \le 2$. 
under specific conditions on the dimension $d$ and the fractional power $\beta$. This covers in particular the cases $d=1,2$ and $\beta=1$, more common in the literature.
In Section \ref{final} we compare our results  on the NLS equation \eqref{eqS} 
with those of Kuksin and Shirikyan in  \cite{KuksinS}. % and \textcolor{blue}{those of Sy and Yu in \cite{S} and \cite{SY}}. 
Basic results on the Faedo-Galerkin approximation are collected in Appendix \ref{app-Gal}. In Appendix \ref{section-compactness} we recall a compactness criterium used throughout the paper.
Appendix \ref{domini} contains a characterization of the domain of the 
fractional Laplacian  and  provides references for the extension of our results either to  a 
$d$-dimensional bounded spatial  domain with  Neumann boundary conditions
or to a  $d$-dimensional bounded manifold without boundary. Indeed, for the sake of exposition, the results 
are presented in the case of a $d$-dimensional bounded domain with Dirichlet boundary conditions.
Appendix \ref{section-dense} presents the properties of the random variable $U(t)$ and its mass, being $U$ any stationary process obtained in the limit for vanishing $\gamma$.

%%%%%%%%%%%%%%%%%%%%%%%%%%%%%
\section{Mathematical setting and assumptions}  \label{s-2}

In this section we first introduce the spaces and the operators we work with, 
then we state the assumptions on the parameter $\sigma$ defining the nonlinearity 
and the assumptions on the stochastic forcing term $\Phi dW$.
Then in \S \ref{s-abstract-eq} we write the  NLS equations \eqref{eqsS} and \eqref{eqS} in  the abstract form and revise their basic properties. In \S \ref{section-m-e} we deal with the mass and the energy functionals.

\noindent{\bf The spaces}\\
Let $D$ be an open bounded subset of $\mathbb R^d$ with $C^\infty$-boundary. 
By ${\mathbb L}^q$, for $q\in [1,\infty]$, we denote the space of equivalence classes of $q$-Lebesgue integrable functions $u:D\to\mathbb C$.
 We  abbreviate ${H}:={\mathbb L}^2$.
This is a complex Hilbert space with inner product $(u,v)_H=\int_D u(x) \overline{v(x)} \,{\rm d}x$.
However,  we can interpret $H$ as  a real Hilbert space with the inner product $\Real(u,v)_H$.
They are different but  in one-to-one correspondence: $(u,v)_H=\Real (u,v)_H+i\Real (u,iv)_H$.
These inner products introduce the same norms and hence both spaces are topologically equivalent.
%}

For $m\in \mathbb N$, we denote   by $H^m(D)$   the Hilbert space  of functions  of $H$ whose
derivatives till order $m$   are in $H$.
For real  non integer $b>0$   we define 
the spaces $H^b(D)$ by interpolation
 (see, e.g., \cite[Chapter 1]{LionsMag}).
 We denote by  $C^{\infty}_0(D)$  the set of smooth functions defined
over $D$ with compact support, and by 
$H_0^{b}(D)$ the closure  of $C^{\infty}_0(D)$  in $H^{b}(D)$.
For $b<0$ we define by duality the space $\left(H^{b}(D)\right)^*:=H^{-b}_0(D)$.
For $b>0$ we have 
\[
H^{b}_0(D)\subseteq  H^{b}(D) \subset H \subset H^{-b}(D).
\]
Moreover, we recall the  continuous embeddings 
\begin{equation}
\label{emb}
\begin{split}
\text{ for }d>2b : &\;H^b(D) \subset {\mathbb L}^{2+2s}\quad \text{ if } s=\frac{2b}{d-2b}
\\
\text{ for }d=2b : &\;H^b(D) \subset {\mathbb L}^{p} \qquad  \forall \ p\in [2, \infty)
\\
\text{ for }d<2b : &\;H^b(D) \subset {\mathbb L}^{\infty}
\end{split}
\end{equation}
and the compact embedding $H^b(D) \subset H^{b-\epsilon}(D) $ for any $\epsilon >0$.

\noindent{\bf The linear operator}\\
We consider  the negative Laplacian operator  with the homogeneous Dirichlet boundary condition $-\Delta_{\text{Dir}}$. This is  
a linear   unbounded  positive self-adjoint operator in $H$ with domain $\mathcal{D}\left(-\Delta_{\text{Dir}}\right)
= H^2(D)\cap H^1_0(D)$. 
We denote by $\{\lambda_j\}_{j \in \mathbb{N}}$ the sequence of the eigenvalues of this operator. This is a nondecreasing  sequence  with $\lambda_j >0$ and $\lambda_j\rightarrow \infty$ as $j \rightarrow \infty$. By $\{e_j\}_{j \in \mathbb{N}}$ we denote the sequence of the corresponding eigenfunctions. This is a complete orthonormal basis in $H$.
For any real $b>0$, the fractional operator $(-\Delta_{\text{Dir}})^b$ is an unbounded positive self-adjoint operator in $H$, by the functional calculus. We can write 
\[
(-\Delta_{\text{Dir}})^b v = \sum_{j=1}^\infty \lambda_j^b (v, e_j)_H e_j
\]
of domain 
\[
\mathcal{D}\left((-\Delta_{\text{Dir}})^b\right) = \left\{ v \in H: \sum_{j=1}^\infty |\lambda_j|^{2b} |(v, e_j)_H|^2 < +\infty\right\}.
\]
For a characterization of the space $\mathcal{D}\left((-\Delta_{\text{Dir}})^b\right)$ see %\cite[Theorem 2]{Grubb} and the 
Appendix \ref{domini}. 

Given the parameter $\beta>0$ appearing in equation \eqref{eqsS}, we set
\[
A:=(-\Delta)^\beta \qquad \text{and} \qquad  \EA:=\mathcal D(A^{\frac 12}).
\]
The space $\EA$ is a Hilbert space when equipped with the inner product 
\[
( u,v)_{V}=(u,v)_H+(A^{\frac 12}u,A^{\frac 12}v)_H.
\]
We call $V$ the energy space. 
We denote the dual space of $V$ by $V^*$ and abbreviate by $\langle\cdot,\cdot\rangle$ the duality bracket between $V$ and $V^*$, where the complex conjugation is taken over the second variable of the duality.\footnote{We will use the same notation for the duality ${\mathbb L}^{p}- {\mathbb L}^{p^\prime}$ for $\frac 1p+\frac 1{p^\prime}=1$, $1<p<\infty$.}
Note that $(\EA,H,\EAdual)$  is a Gelfand triple, i.e.
\[
\EA\hookrightarrow H \cong H^* \hookrightarrow \EAdual.
\]
We introduce the operator  $\hat A$ as
\[
\langle \hat A u,v\rangle=(A^{\frac 12 }u,A^{\frac 12 }v)_ H, \qquad u,v \in \EA.
\]
As proved in  \cite[Lemma 2.3(a)]{BHW-2019}, this is  a positive self-adjoint operator on $\EAdual$  with domain 
  $\mathcal D(\hat A) = \EA$ and $\hat A = A$ on $\mathcal D(A)$. 
In what follows, in most cases, where does not cause ambiguity or confusion, we use the notation $A$ for $\hat A$.

 \noindent{\bf The  nonlinear operator}\\
 We write the nonlinearity as 
 \begin{equation}\label{F}
F(u):= |u|^{2\sigma}u.
\end{equation} 
We have that $F: {\mathbb L}^{2+2\sigma} \to {\mathbb L}^{\frac{2+2\sigma}{1+2\sigma}}$ and
\begin{equation}\label{stimaF}
\|F(u)\|_{{\mathbb L}^{\frac{2+2\sigma}{1+2\sigma}}}
\lesssim \|u\|^{1+2\sigma}_{{\mathbb L}^{2+2\sigma}} .
\end{equation} 
Since 
\begin{equation}
\label{stima_F_L_infty}
\left| |a|^{2\sigma}a-|b|^{2\sigma}b\right |\lesssim_\sigma (|a|^{2\sigma}+|b|^{2\sigma})|a-b|
\end{equation}
for any $a,b\in \mathbb C$, we get by means of the H\"older inequality that $F$ is locally 
Lipschitz, i.e. 
\begin{equation}\label{stimaF-lipschitz}
\|F(u)-F(v)\|_{{\mathbb L}^{\frac{2+2\sigma}{1+2\sigma}}}
\lesssim_\sigma  \left( \|u\|_{{\mathbb L}^{2+2\sigma}}^{2\sigma}+  \|v\|_{{\mathbb L}^{2+2\sigma}}^{2\sigma}\right)
                \|u-v\|_{{\mathbb L}^{2+2\sigma}} .
\end{equation} 
Let us explain the latter notation.  If  functions $a,b\ge 0$ satisfy the inequality $a \le C_A b$ with a constant $C_A>0$ depending on the expression $A$, we write $a \lesssim_A b$; for a generic constant we put no subscript.
 If we have $a \lesssim_A b$ and $b \lesssim_A a$, we write $a \simeq_A  b$.
Moreover
 we shall denote by $C$ a generic positive constant, which might vary from one line to the other, 
except $G$ which is the particular constant in  the next estimate \eqref{constanteG} 
coming from the Gagliardo-Niremberg inequality.   However, if a constant depends on some parameter $p$ and we want to highlight this dependence,  we write $C_p$.

We work under the following assumption on the nonlinearity.

\begin{assumption}[on the nonlinearity \eqref{F}]
\label{ass-sigma}
 \hfill
\begin{itemize}
\item If $\alpha=1$ (focusing), let $0< \sigma<\frac {2\beta}d$.
\item If $\alpha=-1$ (defocusing), let $\begin{cases}0< \sigma< \frac {2\beta}{d-2\beta},& \text{ for }d>2\beta\\\sigma> 0, &  \text{ for }d\le 2\beta\end{cases}$
\end{itemize}
\end{assumption}
For the values of $\sigma$ fulfilling Assumption \ref{ass-sigma}
there is the compact embedding $H^\beta(D) \subset {\mathbb L}^{2+2\sigma}$ (see \eqref{emb}); 
 a fortiori the embedding 
\begin{equation}\label{H1-e-Lsigma}
\EA \subset {\mathbb L}^{2+2\sigma}
\end{equation} 
is compact.

Bearing in mind the continuous embeddings 
\begin{equation}\label{embedd-V-L}
\EA\subset {\mathbb L}^{2+2\sigma}\subset
H \equiv H^* \subset  ({\mathbb L}^{2+2\sigma})^*= {\mathbb L}^{\frac{2+2\sigma}{1+2\sigma}} \subset \EAdual,
\end{equation} 
from \eqref{stimaF} we also get that  $F:\EA \to \EAdual$ with
\begin{equation}\label{F-grandi}
\|F(u)\|_{\EAdual} 
\lesssim 
\|F(u)\|_{{\mathbb L}^{\frac{2+2\sigma}{1+2\sigma}}}
\underset{\eqref{stimaF}}{\lesssim}
\|u\|^{1+2\sigma}_{{\mathbb L}^{2+2\sigma}}
\lesssim
\|u\|_\EA^{1+2\sigma}.
\end{equation}

\noindent{\bf The stochastic forcing term}\\
Given two  separable Banach  spaces $X_1$ and $X_2$, we denote by
$\mathcal L(X_1,X_2)$ 
the space of  linear operators from $X_1$ to $X_2$. Moreover, 
 $\gamma(X_1,X_2)$  is 
the space of  $\gamma$-radonifying operators from $X_1$ to $X_2$.
If  $X_1$ and $X_2$ are Hilbert spaces, this is the 
space $L_{HS}(X_1,X_2)$  of  Hilbert-Schmidt operators from $X_1$ to $X_2$.

As far as the stochastic term is concerned, we consider  a complete probability space
 $(\Omega,\mathcal{F},\mathbb{P})$ and a real Hilbert space $Y$
with an  orthonormal basis  $\{e^Y_j\}_{j\in\mathbb{Z}_0}$, where ${\mathbb Z}_0=\mathbb Z\setminus \{0\}$. We use the double series for ease of notation in the following expression of $\Phi W$.
Let $W$ be  a $Y$-canonical cylindrical Wiener process adapted to a filtration $\mathbb{F}$ satisfying the standard
 conditions. We can write it formally as a series
\[
W(t)= \sum_{j\in \mathbb Z_0}W_j(t) e^Y_j, 
\]
with $\{W_j\}_{j\in \mathbb Z_0}$  a sequence of i.i.d. real Wiener processes (see, e.g., \cite[Ch 4]{dpz}). Hence, 
for a given linear operator  $\Phi:Y\to \EA$ we have
\begin{equation}\label{def-PhiW}
\Phi W(t)= \sum_{j\in\mathbb{Z}_0}W_j(t) \Phi e^Y_j.
\end{equation}
In particular we consider $\Phi$ of the following type
\begin{equation}\label{espressione-del-noise}
\Phi e^Y_j=\begin{cases} \varphi_j e_j, & j >0\\ \im \varphi_{j} e_{-j}, & j<0\end{cases}
\end{equation}
with $\varphi_j\in \mathbb R$ for any $j\in\mathbb Z_0$;
so \eqref{def-PhiW} becomes 
\begin{equation}\label{laserie}
\Phi W(t)= \sum_{j=1}^\infty  \left( \varphi_j W_j(t)+\im \varphi_{-j} W_{-j}(t)\right) e_j.
\end{equation}

We work under the following assumptions on the noise. We denote by 
$\|\Phi\|_{L_{HS}(Y,\EA)} := \big(\sum_{j\in{\mathbb Z}_0} \|\Phi e^Y_j\|^2_\EA\big)^{1/2}$ the Hilbert-Schmidt norm of the operator  $\Phi:Y\to \EA$.
\begin{assumption}[on the noise] \label{ass_G}
 $\Phi:Y\to \EA$ is a  Hilbert-Schmidt operator, i.e.
\begin{equation}\label{noise-nell-assumption} 
\sum_{j=1}^\infty  (1+\lambda_j^\beta) (\varphi_j^2+\varphi_{-j}^2)<\infty.
\end{equation}
\end{assumption}
Of course the operator $\Phi$ is also a $\gamma$-radonifying operator from $Y$ to ${\mathbb L}^{2+2\sigma}$
and a 
 Hilbert-Schmidt operator from $Y$ to $H$, with
\[
\|\Phi\|_{L_{HS}(Y,H)}: = \big(\sum_{j\in{\mathbb Z}_0}  \|\Phi e^Y_j\|^2_H\big)^{1/2}=
 \big(\sum_{j=1}^\infty (\varphi_j^2+\varphi_{-j}^2)\big)^{1/2} < \infty.
\]
  
 \begin{remark}
In order to compare our setting with the more general one of our previous paper \cite{noi2d} 
for $d=2$ and $\beta=1$, we point out that in that case
 $\Phi$ is also a  $\gamma$-radonifying operator from $Y$ to  ${\mathbb L}^{p}$ for any finite $p$. 
\end{remark}

We collect the basic properties of the Wiener process \eqref{def-PhiW} in the following lemma, showing that it 
 takes values in $C^a([0,\infty);\EA)$, $\mathbb{P}$-a.s., where 
the H\"older space $C^a([0,T];\EA)$ is defined in \eqref{defHa}.
%LEMMA
\begin{lemma}
\label{lemma-PhiW}
Under Assumption \ref{ass_G} we have
\begin{equation}
\label{stima-classica-W}
\E \|\Phi W(t)\|_{\EA}^2=t \|\Phi\|_{L_{HS}(Y,\EA)}^2 \qquad \text{ for any }t\ge 0
\end{equation}
and for any arbitrary $a\in [0,\frac 12)$ and $T>0$
\begin{equation}\label{PhiW-Holder}
\E\|\Phi W\|_{C^a([0,T];\EA)}\lesssim_{a,T} \|\Phi\|_{L_{HS}(Y,\EA)} .
\end{equation}
 \end{lemma}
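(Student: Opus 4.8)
The plan is to treat the two assertions separately, both via the series representation \eqref{def-PhiW} and orthogonality of the increments of the independent scalar Wiener processes $\{W_j\}_{j\in\mathbb Z_0}$. For \eqref{stima-classica-W}, I would start from $\Phi W(t)=\sum_{j\in\mathbb Z_0} W_j(t)\,\Phi e^Y_j$ and use that $\E[W_j(t)W_k(t)]=t\,\delta_{jk}$, so that by monotone convergence and the fact that $\{e^Y_j\}$ is orthonormal while $\Phi$ is Hilbert-Schmidt into $\EA$ (Assumption \ref{ass_G}), one gets
\[
\E\|\Phi W(t)\|_\EA^2=\sum_{j\in\mathbb Z_0}\E[W_j(t)^2]\,\|\Phi e^Y_j\|_\EA^2=t\sum_{j\in\mathbb Z_0}\|\Phi e^Y_j\|_\EA^2=t\,\|\Phi\|_{L_{HS}(Y,\EA)}^2.
\]
A small point to mention is that one should first check the partial sums $\sum_{|j|\le N}W_j(t)\Phi e^Y_j$ form a Cauchy sequence in $L^2(\Omega;\EA)$ (same computation), which guarantees $\Phi W(t)$ is a well-defined $\EA$-valued random variable to begin with; this is the standard construction of a $\EA$-valued Wiener process from a Hilbert-Schmidt $\Phi$.

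For the Hölder estimate \eqref{PhiW-Holder}, the cleanest route is the Kolmogorov continuity theorem. The increments $\Phi W(t)-\Phi W(s)$ are centered Gaussian in $\EA$, and by the same orthogonality computation $\E\|\Phi W(t)-\Phi W(s)\|_\EA^2=|t-s|\,\|\Phi\|_{L_{HS}(Y,\EA)}^2$; since a Gaussian vector has all moments controlled by its second moment, for every $p\ge 2$
\[
\E\|\Phi W(t)-\Phi W(s)\|_\EA^p\lesssim_p |t-s|^{p/2}\,\|\Phi\|_{L_{HS}(Y,\EA)}^p.
\]
Applying the Kolmogorov–Čentsov criterion with exponent $p$ large enough that $\frac{p/2-1}{p}>a$ yields, for any $a\in[0,\tfrac12)$, a modification in $C^a([0,T];\EA)$ together with the moment bound $\E\|\Phi W\|_{C^a([0,T];\EA)}^p\lesssim_{a,T,p}\|\Phi\|_{L_{HS}(Y,\EA)}^p$; taking $p$-th roots and, if one insists on the first-moment form stated, using Jensen or the homogeneity of the estimate in $\Phi$ gives \eqref{PhiW-Holder}. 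Alternatively one can invoke a ready-made Gaussian factorization/Da Prato–Zabczyk-type lemma for stochastic convolutions with $A=0$, but the Kolmogorov argument is self-contained given the definition of $C^a$ in \eqref{defHa}.

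There is no serious obstacle here — the lemma is essentially the textbook regularity of a Hilbert-space-valued Wiener process, and the only thing to be careful about is bookkeeping: confirming the series converges in the right space so that all the interchanges of $\E$ and $\sum$ are legitimate (monotone convergence handles the second moments, and the $L^p$ bounds then follow from Gaussianity), and choosing the Kolmogorov exponent correctly so as to reach every $a<\tfrac12$. The dependence of the implicit constant on $a$ and $T$ is exactly what the Kolmogorov theorem produces (through the constant and through the diameter $T$ of the time interval), which matches the statement $\lesssim_{a,T}$.
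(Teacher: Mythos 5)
Your proof is correct, and for the first identity it coincides with the paper's (a direct computation from the series \eqref{def-PhiW} using independence and orthonormality). For the H\"older estimate \eqref{PhiW-Holder} you take a genuinely different, though closely related, route: you feed the Gaussian increment bound $\E\|\Phi W(t)-\Phi W(s)\|_\EA^p\lesssim_p |t-s|^{p/2}\|\Phi\|^p_{L_{HS}(Y,\EA)}$ into the Kolmogorov--\v{C}entsov continuity criterion with $p$ large, whereas the paper feeds the same increment bound into the fractional Sobolev space $W^{\alpha,q}(0,T;\EA)$ (checking that the double integral $\int_0^T\int_0^T |t-s|^{q/2-1-\alpha q}\,{\rm d}s\,{\rm d}t$ is finite precisely when $\alpha<\tfrac12$) and then invokes the embedding $W^{\alpha,q}(0,T;\EA)\subset C^{\alpha-\frac1q}([0,T];\EA)$ for $\alpha q>1$, reaching every $a=\alpha-\frac1q<\tfrac12$ by taking $q$ large. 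The two arguments are essentially equivalent in strength here --- both exhaust the range $a<\tfrac12$ and both produce a constant depending on $a$ and $T$ with the right linear homogeneity in $\|\Phi\|_{L_{HS}(Y,\EA)}$; the Kolmogorov route is more self-contained, while the paper's choice of the $W^{\alpha,q}$ machinery is the one it reuses later (e.g.\ in the tightness arguments of Sections \ref{sect-existence} and \ref{il}), which is presumably why it is spelled out in that form. Your bookkeeping remarks (Cauchy partial sums in $L^2(\Omega;\EA)$, Gaussian hypercontractivity to pass from second to $p$-th moments, Jensen to return to the first-moment form of \eqref{PhiW-Holder}) are all sound; just note that the $C^a$ norm \eqref{defHa} also contains the sup term, which you control either from $\Phi W(0)=0$ together with the H\"older seminorm or from the $L^{2m}$ moment bound, exactly as the paper does via \eqref{PhiW-in-L}.
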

\begin{proof} These are classical results, see  e.g.,   \cite[Chapter  4]{dpz}. 
 However we need a particular estimate in the next sections. So we provide  some details for the $C^a([0,\infty);\EA)$-norm. 
 
Equation \eqref{stima-classica-W} can be proved by direct computation starting from the definition 
\eqref{def-PhiW}. Moreover, 
since the Wiener process has normal distribution, it easily follows that a similar relation holds for any power: 
 for any $m\in\mathbb N$ there exists a constant $C_m$ such that
 \[
 \E \|\Phi W(t)\|_{\EA}^{2m}\le C_m t^m  \|\Phi\|_{L_{HS}(Y,\EA)}^{2m}.
 \]
 Therefore 
 \begin{equation}\label{PhiW-in-L}
 \E\|\Phi W\|^{2m}_{L^{2m}(0,T;V)}=
 \E\int_0^T  \|\Phi W(t)\|_{\EA}^{2m}\  {\rm d}t \le  \frac{C_m}{m+1}   \|\Phi\|_{L_{HS}(Y,\EA)}^{2m} T ^{m+1}.
 \end{equation}

Now, for $\alpha\in(0,1)$ and $q\in (1,\infty)$
we define the space $W^{\alpha,q}(0,T;\EA)$ as the space of functions $f\in L^q(0,T;\EA)$ such that 
\[
\int_0^T \int_0^T \frac{\|f(t)-f(s)\|^q_\EA}{|t-s|^{1+\alpha q}} \ {\rm d}s \  {\rm d}t
<\infty.
\]
For any $q\in \mathbb N$ we have
\begin{equation}\label{PhiW-in-W}
\begin{split}
\E \int_0^T \int_0^T \frac{\|\Phi W(t)-\Phi W(s)\|^q_\EA}{|t-s|^{1+\alpha q}} {\rm d}s {\rm d}t
&= \int_0^T \int_0^T \frac{\E\|\Phi W(t)-\Phi W(s)\|^q_\EA}{|t-s|^{1+\alpha q}} {\rm d}s {\rm d}t
\\&
\lesssim_q   \|\Phi\|_{L_{HS}(Y,\EA)}^q  \int_0^T \int_0^T \frac{|t-s|^{\frac q2}}{|t-s|^{1+\alpha q}}
 \ {\rm d}s \  {\rm d}t
\end{split}
\end{equation}
which is finite if and only if $\frac q2-\alpha q>0$, i.e. $\alpha<\frac 12$.
Recalling that  for $\alpha q>1$ there is the continuous embedding
\[
W^{\alpha,q}(0,T;\EA)\subset C^{\alpha-\frac 1q}([0,T];\EA),
\]
from \eqref{PhiW-in-L} with $2m\ge q$  and  from \eqref{PhiW-in-W} we get
\begin{equation}\label{stima_PhiW-Ca}
\E \|\Phi W\|_{C^{\alpha-\frac 1q}([0,T];\EA)}
\lesssim_{T,\alpha,q}  \|\Phi\|_{L_{HS}(Y,\EA)},
\end{equation}
when 
\begin{equation}\label{alpha-q}
0< \alpha<\tfrac 12,\quad\alpha q>1.
\end{equation}
This condition requires $q>2$.
Therefore, if the parameters $\alpha$ and $q$ fulfil \eqref{alpha-q}, then  the process $ \{\Phi W(t)\}_{t\ge 0}$ takes values in $C^{\alpha-\frac 1q}([0,\infty);\EA)$, $\mathbb{P}$-a.s.. Finally, given any $a\in (0,\frac 12)$
there exist $\alpha$ and $q$ fulfilling the conditions \eqref{alpha-q} such that $a=\alpha-\frac 1q$.
\end{proof}

% % % % % % % % % %%%%%%%%%%%%%%%
\subsection{The equation in abstract form}  \label{s-abstract-eq}
We consider the stochastic NLS equation with additive noise
 in a more general setting , i.e.  without imposing that its intensity depends on the damping parameter
 $\gamma$ as in \eqref{eqsS}, and we write it  in the abstract form as 
\begin{equation}\label{EQSgen}
\begin{cases}
{\rm d} u(t)+\left[ -\im A u(t)+\im \alpha F( u(t)) +\gamma u(t) \right] \,{\rm d}t 
= \Phi  \,{\rm d} W(t),\qquad t>0
\\
u(0)=u^{0}
\end{cases}
\end{equation}

Let us denote by $C_w([0,T];\EA)$  the space
of all continuous functions from the interval $[0,T]$ to  the space $\EA$ endowed with the weak topology; 
it is equipped  with the locally convex topology induced by the family 
of seminorms given by $\displaystyle\sup_{0\le t\le T} |\langle u(t),v\rangle|$ for $v\in \EAdual$.
The Borel $\sigma$-algebra on $C_w([0,T];\EA)$ is the $\sigma$-algebra generated by the open sets in this locally convex topology. 

We now define what is a martingale solution and a stationary martingale solution for the NLS equation \eqref{EQSgen}.

%DEF martingale solution
\begin{definition}[martingale solution]\label{def-martingale solution}
Let $\mu$ be a probability measure on $\EA$.
A \emph{martingale solution} of the equation $\eqref{EQSgen}$ on the time interval $[0,T]$ 
with initial value of law $\mu$ is a system
\[
\bigl(\tilde{\Omega},\tilde{\F},\tilde{\mathbb P},\tilde{W},\tilde{\Filtration},u\bigr)
\]
 consisting of
	\begin{itemize}
		\item a filtered probability space $\bigl(\tilde{\Omega},\tilde{\F},\tilde{\Prob},\tilde{\Filtration}\bigr)$,   satisfying the usual conditions, i.e.
 the filtration $\tilde{\Filtration}=\bigl(\tilde{\F}_t\bigr)_{t\in [0,T]}$ is  right-continuous and such that all $\tilde{\Prob}$-negligible  sets of $\tilde{\F}$ are elements of $\tilde{\F}_0$;
		\item  a $Y$-cylindrical   Wiener  processes $\tilde{W}$   on  $\bigl(\tilde{\Omega},\tilde{\F},\tilde{\Prob}\bigr);$
		\item an $H$-valued  continuous and  $\tilde{\Filtration}$-adapted process  $u$ with
$\tilde{\Prob}$-almost all paths in $ C_w([0,T];\EA)$,
	such that $u(0)$ has law $\mu$
and for every $t\in [0,T]$
 the equality
	\begin{equation}\label{eqn-ItoFormSolution}
	u(t)=  u(0)+ \int_0^t \left[\im A u(s)-\im \alpha F(u(s))-\gamma  u(s) \right] \df s     
	+\Phi W(t)
	\end{equation}
	holds $\tilde{\mathbb{P}}$-almost surely in $\EAdual$.
\end{itemize}
\end{definition}

%REMARK
\begin{remark}\label{rem-continuita-nella-def}
Since $\tilde{\Prob}$-almost all paths $u \in C_w([0,T];\EA)$, then 
$\tilde{\mathbb{P}}$-a.s. we have  $u(t)\in \EA$ for all $t\in [0,T]$; 
in particular $u(0)\in \EA$  and the requirement that its law equals $\mu$  is meaningful.

Moreover the space $C_w([0,T];\EA)$ is a subspace of $ L^{\infty}(0,T;\EA)$. 
Then the integral in the r.h.s. of \eqref{eqn-ItoFormSolution} is well defined.
In particular
\begin{align*}
&\int_0^\cdot  A u(s) \df s   \in C([0,T];\EAdual)
\\
&\int_0^\cdot F(u(s)) \df s   \in C([0,T];  {\mathbb L}^{\frac{2+2\sigma}{1+2\sigma}})\subset C([0,T];\EAdual) \quad\text{ by \eqref{stimaF} and  \eqref{embedd-V-L}}
\\
& \int_0^\cdot u(s) \df s   \in C([0,T];\EA)\subset  C([0,T];\EAdual)
\end{align*}
\end{remark}

In the sequel we will deal with strong solutions as well; by this we mean a process  $u$
fulfilling \eqref{eqn-ItoFormSolution} and the regularity specified above but we assume that  the  filtered probability space and the Wiener process 
are  a priori assigned.

As far as stationary martingale solutions are concerned, we give the definition involving the time interval $\mathbb R_+=[0+\infty)$. 
%Now the space $C_w([0,+\infty);\EA)$  is  equipped it  with the locally convex topology induced by the family of seminorms given by $\displaystyle\sup_{0\le t\le k} |\langle u(t),v\rangle|$ for $k \in \mathbb N$, $v\in \EAdual$.
%DEF stationary martingale solution
\begin{definition}[stationary martingale solution]\label{def- stationary martingale solution}
A \emph{stationary martingale solution} of the equation $\eqref{EQSgen}$ on the time interval
$[0,+\infty)$ is a system
\[
\bigl(\tilde{\Omega},\tilde{\F},\tilde{\mathbb P},\tilde{W},\tilde{\Filtration},u\bigr)
\]
 consisting of
	\begin{itemize}
		\item a filtered probability space $\bigl(\tilde{\Omega},\tilde{\F},\tilde{\Prob},\tilde{\Filtration}\bigr)$,   satisfying the usual conditions, i.e.
 the filtration $\tilde{\Filtration}=\bigl(\tilde{\F}_t\bigr)_{t\in [0,\infty)}$ is  right-continuous and such that all $\tilde{\Prob}$-null, i.e. $\tilde{\Prob}$-negligible,  sets of $\mathscr{F}$ are elements of $\tilde{\F}_0$;
		\item  a $Y$-cylindrical   Wiener  processes $\tilde{W}$   on  $\bigl(\tilde{\Omega},\tilde{\F},\tilde{\Prob}\bigr);$
		\item an  $H$-valued  continuous and  $\tilde{\Filtration}$-adapted process  $u$ with
 $\tilde{\Prob}$-almost all paths in $ C_w([0,\infty);\EA)$,
	such that for any $t>0$ equation \eqref{eqn-ItoFormSolution} holds 
	$\tilde{\mathbb{P}}$-almost surely in $\EAdual$. This is a stationary process 
	in $\EA$. 
\end{itemize}
\end{definition}
%REMARK

%{\color{purple}  \begin{remark}According to Remark \ref{rem-continuita-nella-def}, $\tilde{\mathbb{P}}$-a.s. we have  $u(t)\in \EA$ for all $t\ge 0$. Hence the stationarity condition in $\EA$ is meaningfull. In particular  for any $t\ge 0$ the law of  $u(t)$ is {\color{purple}the  measure $\mu$} on $\EA$.\end{remark}}

%%%%% subsection
\subsection{Conserved quantities for the unforced equation \eqref{eqS} }
\label{section-m-e}

We define the mass
\begin{align}
\label{mass}
&\mathcal{M}(u)=\|u\|^2_H
\\
\intertext{and the energy} 
\label{energy}
&\mathcal{E}(u)=\frac 12 \|A^{\frac 1 2} u\|^2_H -\frac{\alpha}{2+2\sigma}\|u\|_{{\mathbb L}^{2+2\sigma}}^{2+2\sigma}.
\end{align}
Thanks to \eqref{H1-e-Lsigma} they are both well defined on $\EA$. 
Moreover, if $u$ is a martingale solution, then 
the mappings $t\mapsto \mathcal{M}(u(t))$ and $t\mapsto\mathcal{E}(u(t))$ are in   $L^\infty(0,T)$.
 
Using the evolution equation \eqref{eqS},
now written in abstract form as 
\[
\frac{d u(t)}{dt}+\im A u(t)+\im \alpha F( u(t))
=0,
\]
one can prove (see, e.g.,  \cite{Cazenave}) that 
\[
\frac{{\rm d}\;}{{\rm d}t}\mathcal{M}(u(t))=0, \qquad
\frac{{\rm d}\;}{{\rm d}t} \mathcal{E}(u(t))=0,
\]
for suitably regular solutions $u$, that is the mass and the energy are constant in time.

For the defocusing equation ($\alpha=-1$) the energy is not negative and it dominates both 
$ \|A^{\frac 1 2} u\|_H^2$ and $ \|u\|_{{\mathbb L}^{2+2\sigma}}^{2+2\sigma} $. 
In order to have similar properties in the focusing case ($\alpha=1$) 
we introduce the  modified energy. Before defining it, let us recall the Gagliardo-Niremberg inequality
(see \cite{brezis})
\begin{equation}\label{GN-ineq}
\|u\|_{{\mathbb L}^{2+2\sigma}}
\le C \|u\|_{{\mathbb L}^2}^{1-\frac{\sigma d}{2\beta(1+\sigma)}} \| u\|_{H^\beta(D)}^{\frac{\sigma d}{2\beta(1+\sigma)}}
\qquad \forall u \in H^\beta(D)
\end{equation}
when $\sigma d<2\beta(1+\sigma)$. This is equivalent to 
\begin{equation}
\|u\|^{2+2\sigma}_{{\mathbb L}^{2+2\sigma}}
\lesssim  \|u\|_{{\mathbb L}^2}^{2(1+\sigma)-\frac{\sigma d}{\beta}} \| u\|_{H^\beta(D)}^{\frac{\sigma d}{\beta}}.
\end{equation}
In particular this holds for the values of $\sigma$ specified in the Assumption \ref{ass-sigma} (for $\alpha=1$).
Therefore, in the focusing case, thanks to the 
Young inequality for any $\epsilon>0$ there exists $C_\epsilon >0$ such that
\begin{equation}\label{GN-somma}
\|u \|_{{\mathbb L}^{2+2\sigma}}^{2+2\sigma}
\le
\epsilon \| u \|_\EA^2 +C_\epsilon
 \|u \|_{H}^{2+\frac{4\beta\sigma}{2\beta-\sigma d}}
 \qquad \forall u \in V.
\end{equation}
Hence for $\alpha=1$  we define the modified energy ${\mathcal E}_1:\EA\to\mathbb R$ as
\begin{equation}\label{modifEnergy}
\begin{split}
{\mathcal E}_1(u)&= \frac 1 2 \| u\|_\EA^2-\frac1{2+2\sigma} \|u\|_{{\mathbb L}^{2+2\sigma}}^{2+2\sigma} 
+G\|u\|_H^{2+\frac{4\beta\sigma}{2\beta-\sigma d}}
\\
&\equiv {\mathcal E}(u)+   \frac 1 2 \| u\|_H^2 + G\|u\|_H^{2+\frac{4\beta\sigma}{2\beta-\sigma d}},
\end{split}
 \end{equation}
where $G=G_{\beta,\sigma,d}$ is the constant appearing in the following particular form of \eqref{GN-somma}
\begin{equation}\label{constanteG}
\frac1{2+2\sigma} \|u\|_{{\mathbb L}^{2+2\sigma}}^{2+2\sigma} 
\le \frac 1{4+4\sigma} \| u\|_\EA^2+G \|u\|_H^{2+\frac{4\beta\sigma}{2\beta -\sigma d}}.
\end{equation}
Using this bound in \eqref{modifEnergy} we obtain  that
\begin{equation}\label{nabla-Htilde}
 {\mathcal E}_1(u)\ge \frac{1+2\sigma}{4+4\sigma} \| u\|_\EA^2\ge 0 
 \qquad \forall u\in \EA
\end{equation}
so the modified energy is non negative.
Moreover \eqref{constanteG} and \eqref{nabla-Htilde} imply that 
\begin{equation}\label{tildeH-domina}
\|u\|_{{\mathbb L}^{2+2\sigma}}^{2+2\sigma} \le
\frac {2+2\sigma}{1+2\sigma} {{\mathcal E}}_1(u)+(2+2\sigma)G\|u\|_H^{2+\frac{4\beta\sigma}{2\beta-\sigma d}} \qquad \forall u\in \EA.
\end{equation}

%\sout{Summing up, the energy in the defocusing case and the modified energy in the focusing case are both non negative quantities.}
In the following we will use the notation
\begin{equation}\label{due-energie}
{\mathcal E}_\alpha(u) =  {\mathcal E}(u)+  1_{\alpha=1}\left( \frac 1 2 \| u\|_H^2 + G\|u\|_H^{2+\frac{4\beta\sigma}{2\beta-\sigma d}}\right)
\end{equation}
for $\alpha \in \{-1,+1\}$.

%%%%%%%%%%%%%%%%
\section{Existence of stationary solutions for the stochastic equation}
\label{sect-existence}

In this Section we prove  that there exist    stationary martingale solutions
for  the stochastic nonlinear Schr\"odinger equation \eqref{EQSgen}. Moreover, for any  damping coefficient
$\gamma>0$,  we prove some mean  estimates, which are uniform with respect to the time variable 
 $t\in[0,+\infty)$. They quantify the effect of the damping term.  
 In particular, two estimates involving the mass and the energy will be crucial in the analysis of the limit as $\gamma\to 0$.
We do not deal with the uniqueness problem but some comments are given  in Remark \ref{oss-generale-esistenza}. 

We introduce a finite-dimensional approximation of the  stochastic nonlinear  Schr\"odinger  equation \eqref{EQSgen} in the space $ H_n=span(e_j:j\le n)$.

We define the projector operator $P_n:H\to H_n$ as
\[
P_n  u = \sum_{j=1}^n (u,e_j)_H \ e_j.
\]
To denote the scalar product in $H_n$ we use the same notation as for $H$.

The $P_n$'s do not destroy the conservation of mass and energy;
moreover, $P_nA=AP_n$. Thus we have
\begin{align}
\label{bound_H_Pn}
&\|P_n u\|_{H}
\le \|u\|_{H}, \quad u \in H%\qquad \qquad \text{and} \qquad\|P_n u\|_{\EAdual}\le \|u\|_\EAdual, \quad   u\in \EAdual,
\\
\label{bound_V_Pn}
&\|P_n u\|_{\EA}^2=\|P_n u\|_{H}^2+\|A^\frac12 P_n u\|_{H}^2
=\|P_n u\|_{H}^2+\|P_n A^\frac 12 u\|_{H}^2
\le \|u\|_{\EA}^2, \quad u \in \EA
\\\label{bound_Phi_Pn}
&\|P_n \Phi\|_{L_{HS}(Y,H)}\le \|\Phi\|_{L_{HS}(Y,H)}, \qquad
\|P_n \Phi\|_{L_{HS}(Y,\EA)}\le \|\Phi\|_{L_{HS}(Y,\EA)}.
\end{align}
 However, there is no uniform estimate in the Lebesgue space ${\mathbb L}^{2+2\sigma}$; 
to get indeed uniform estimates we use the continuous embedding $\EA\subset {\mathbb L}^{2+2\sigma}$, so

\begin{equation}
\label{bound_L_V}
\|P_n u\|_{{\mathbb L}^{2+2\sigma}} \lesssim  \|P_n u\|_\EA \le \| u\|_\EA, \qquad u \in \EA.
\end{equation} 
By density, we can extend $P_n$ to an operator $P_n :\EAdual\to H_n$ with $\|P_n\|_{\EAdual\to\EAdual}\le 1$ and 
\[
\langle v,P_n v\rangle \in \mathbb R, \qquad \langle v,P_n w\rangle=(P_n v,w)_H, \quad v\in\EAdual, w\in \EA.
\]

 We also have
\[\begin{split}
&\lim_{n\to\infty} \|P_nx-x\|_H=0, \qquad v\in H
\\
&\lim_{n\to\infty} \|P_nx-x\|_\EA=0, \qquad v\in \EA
\\
&\lim_{n\to\infty} \|P_nx-x\|_{{\mathbb L}^{2+2\sigma}}\le C \lim_{n\to\infty} \|P_nx-x\|_\EA= 0, , \qquad v\in \EA.
\end{split}\]

The Faedo-Galerkin approximation is obtained 
by projecting  the Schr\"odinger equation  \eqref{EQSgen}
onto the finite-dimensional space $H_n$, that is 
\begin{equation}\label{eq-Gal}
{\rm d} u_n(t)+\left[ -\im A  u_n(t)+\im  \alpha P_n F(u_n(t)) +\gamma u_n(t) \right] \,{\rm d}t 
=P_n \Phi  \,{\rm d} W(t).
\end{equation}
Also the initial condition $u_n(0)=u_n^0$ is obtained by projection onto $H_n$, i.e.
$u_n^0= P_n u^0 $.

It is a classical result to show that the Faedo-Galerkin equation has a unique solution. This is a strong solution in the probabilistic sense.
The main results are given and proven in Appendix \ref{app-Gal}.

Now we deal with the stationary solutions of the Faedo-Galerkin equation \eqref{eq-Gal} and provide moment estimates.

 We introduce the function
 \begin{equation}\label{phi_alpha}
\phi_{\alpha}(d,\beta, \sigma,\gamma, \Phi)
=
\|\Phi\|_{L_{HS}(Y,\EA)}^{2}
+ \|\Phi\|_{L_{HS}(Y,\EA)}^{2+2\sigma}\gamma^{-\sigma}
+\mathbbm{1}_{\alpha=1} \|\Phi\|^{2+\frac{4\beta\sigma}{2\beta-\sigma d}}_{L_{HS}(Y,H)} \gamma^{ -\frac{2\beta\sigma}{2\beta-\sigma d}},
\end{equation}
for $\alpha\in \{-1,+1\}$.
 
% PROPOSITION
\begin{proposition}
\label{prop-u-staz-Galerkin}
Under the Assumptions  \ref{ass-sigma} and \ref{ass_G}, 
 for any $n\in \mathbb N$
  there exists a stationary solution $u^{st}_n$ of  the Galerkin equation \eqref{eq-Gal}
  such that 
\begin{equation}
  \label{Gal_uniform-mass}
\E [{\mathcal M}(u^{st}_n(t))] =\frac1{2\gamma} \|P_n \Phi\|_{L_{HS}(Y,H)}^2 \qquad\text{ for all }t\ge 0.
\end{equation}
Moreover, for every $t\ge 0$ and $p\ge 1$ 
  \begin{align}
  \label{Gal_uniform-mass-p}
\E  [ \mathcal{M}(u^{st}_n(t))^{p}]
 &\lesssim_p \| P_n \Phi \|^{2p}_{L_{HS}(Y,H)} \gamma^{-p}
 \\
 \E  [{ \mathcal E}_\alpha(u^{st}_n(t))^p]
 &\lesssim_{d,\beta,\sigma}\phi_{\alpha}(d,\beta, \sigma,\gamma,P_n \Phi)^p \gamma^{-p} \qquad
 \text{ for } \alpha\in\{-1,1\},
 \label{Gal_uniform-energy}
 \end{align}
 and for every finite $T>0$  and $p\ge 1$, 
\begin{multline}
\label{Gal_sup-est-V}
\mathbb{E} \left[\sup_{0\le t\le T}\|u_n^{st}(t)\|^{2p}_{\EA} \right] \lesssim_{d, \sigma, p,T} 
%(1+ \mathbbm{1}_{\alpha=1}e^{\gamma T})
(\mathbbm{1}_{\alpha=-1}+ \mathbbm{1}_{\alpha=1} e^{C p \gamma T})
\\\times
\left(1+ \phi_{\alpha}(d,\beta, \sigma,\gamma, P_n \Phi)^p \gamma^{-p}
+ \|P_n \Phi\|_{L_{HS}(Y,\EA)}^{2p(1+\sigma)} 
+ \mathbbm{1}_{\alpha=1} \|P_n  \Phi\|_{L_{HS}(Y,H)}^{2p(1+\frac{2\beta\sigma}{2\beta-\sigma d})}
\right).
\end{multline}
\end{proposition}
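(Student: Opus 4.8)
The plan is to construct the stationary solution of the finite-dimensional system \eqref{eq-Gal} via the classical Krylov--Bogoliubov argument applied to a well-chosen Lyapunov functional, and then extract the moment bounds from the It\^o formula applied to the mass and to the (modified) energy. More precisely, since $H_n$ is finite-dimensional and $F$ is locally Lipschitz, equation \eqref{eq-Gal} with a deterministic initial datum $u_n^0\in H_n$ has a unique global strong solution (the global existence on $[0,\infty)$ follows because the It\^o formula applied to $\mathcal M(u_n)$ and to $\mathcal E_\alpha(u_n)$, see below, produces a priori bounds that rule out explosion; this is the content of Appendix \ref{app-Gal}). Denote by $(\mu_t)_{t\ge 0}$ the time-averaged laws, $\mu_t=\frac1t\int_0^t \mathrm{Law}(u_n(s;0))\,\mathrm ds$, starting from $u_n^0=0$. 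The uniform-in-$t$ energy bound derived below gives tightness of $\{\mu_t\}_{t\ge 0}$ on $H_n$ (all norms being equivalent there), so by Krylov--Bogoliubov there is an invariant measure $\nu_n$ for the Markov semigroup associated with \eqref{eq-Gal}; the corresponding stationary process is the desired $u_n^{st}$, and it may be realized on $[0,\infty)$ by the standard extension.

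The quantitative heart is the It\^o calculus. First apply It\^o's formula to $\mathcal M(u_n(t))=\|u_n(t)\|_H^2$: the dispersive term $-\im A u_n$ and the nonlinear term $\im\alpha P_nF(u_n)$ contribute nothing to $\frac{\mathrm d}{\mathrm dt}\E\|u_n\|_H^2$ because $\Real(\im A v,v)_H=0$ and $\Real(\im F(v),v)_H=0$ (these are exactly the conservation-of-mass identities that $P_n$ respects), the damping gives $-2\gamma\|u_n\|_H^2$, and the It\^o correction gives $\|P_n\Phi\|_{L_{HS}(Y,H)}^2$. In the stationary regime $\frac{\mathrm d}{\mathrm dt}\E\mathcal M=0$, which yields \eqref{Gal_uniform-mass} immediately. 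For the $p$-th moment \eqref{Gal_uniform-mass-p}, apply It\^o to $\mathcal M(u_n)^p$: one gets $\frac{\mathrm d}{\mathrm dt}\E[\mathcal M^p]\le -2p\gamma\,\E[\mathcal M^p]+C_p\E[\mathcal M^{p-1}]\|P_n\Phi\|_{L_{HS}(Y,H)}^2$ (the martingale part vanishing in expectation, the damping providing the needed strict sign, and the trace term handled by Young's inequality absorbing $\epsilon\,\E[\mathcal M^p]$). Setting the time-derivative to zero in the stationary state and iterating on $p$ gives \eqref{Gal_uniform-mass-p}. For the energy bound \eqref{Gal_uniform-energy} apply It\^o to $\mathcal E_\alpha(u_n(t))$: the Hamiltonian structure kills the contributions of $-\im A u_n$ and $\im\alpha P_nF(u_n)$ to the energy, the damping term acting on $\frac12\|A^{1/2}u_n\|_H^2$ and on $-\frac{\alpha}{2+2\sigma}\|u_n\|_{\mathbb L^{2+2\sigma}}^{2+2\sigma}$ yields the dissipative main terms, the extra mass terms in $\mathcal E_1$ are controlled by \eqref{Gal_uniform-mass-p}, and the It\^o correction produces $\|P_n\Phi\|_{L_{HS}(Y,V)}^2$ plus lower-order terms; the coercivity \eqref{nabla-Htilde}, the domination \eqref{tildeH-domina}, and Young's inequality (precisely in the form that manufactures the $\gamma^{-\sigma}$ and, for $\alpha=1$, $\gamma^{-2\beta\sigma/(2\beta-\sigma d)}$ factors from cross terms of the shape $\|\Phi\|_{L_{HS}}^2\|u_n\|_{\mathbb L^{2+2\sigma}}$) give the bound in terms of $\phi_\alpha(d,\beta,\sigma,\gamma,P_n\Phi)$; then raise to the $p$-th power and repeat the stationary-state cancellation as for the mass.

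Finally, for the pathwise supremum bound \eqref{Gal_sup-est-V}, do not set the time-derivative to zero but integrate the It\^o formula for $\|u_n(t)\|_V^2$ — equivalently $2\mathcal E_\alpha(u_n(t))$ up to the lower-order mass terms and, for $\alpha=1$, up to the sign-indefinite $\mathbb L^{2+2\sigma}$ term controlled by \eqref{tildeH-domina} — over $[0,t]$, take the supremum over $t\le T$, and estimate the stochastic integral by the Burkholder--Davis--Gundy inequality; the quadratic variation is bounded by $\int_0^T\|u_n\|_V^2\|P_n\Phi\|_{L_{HS}(Y,V)}^2\,\mathrm ds$, so a Young splitting moves half of $\E\sup\|u_n\|_V^{2p}$ to the left-hand side, leaving an expression in $\E\int_0^T\|u_n\|_V^{2p}\,\mathrm ds$ which is controlled by the already-established stationary energy bound \eqref{Gal_uniform-energy}; the factor $e^{Cp\gamma T}$ appears in the focusing case because there the drift in $\|u_n\|_V^2$ is only controlled up to a $+\gamma\|u_n\|_V^2$ term (the modified energy, not $\|u_n\|_V^2$ itself, being the quantity with a clean sign), so a Gr\"onwall step on $[0,T]$ is needed. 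The main obstacle, and the part requiring genuine care rather than bookkeeping, is the precise Young-inequality accounting in the energy estimate: one must choose the splitting parameters so that the $V$-norm (resp.\ $\mathbb L^{2+2\sigma}$-norm) terms are absorbed by the dissipation with the \emph{correct powers of $\gamma$}, matching exactly the exponents $-\sigma$ and $-2\beta\sigma/(2\beta-\sigma d)$ in $\phi_\alpha$, since these are what make the $\gamma\to0$ analysis of Section \ref{il} work; everything else is a routine, if lengthy, application of It\^o's formula, BDG, Gr\"onwall and the Krylov--Bogoliubov theorem.
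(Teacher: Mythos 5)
Your proposal follows essentially the same route as the paper: Krylov--Bogoliubov on the finite-dimensional Galerkin system, the It\^o formulas for the mass and the (modified) energy with the damping supplying the dissipation and Gagliardo--Nirenberg handling the focusing case, and Burkholder--Davis--Gundy plus Gronwall for the supremum bound. The only minor difference is that you set $\frac{\mathrm d}{\mathrm dt}\E[\cdot]=0$ directly in the stationary regime instead of proving the transient decay estimates of Propositions \ref{bound_lemma_mass} and \ref{bound_lemma_energy} for arbitrary initial data and then evaluating them at the invariant initial law; this is legitimate provided you first record that the relevant moments under the invariant measure are finite (which your uniform-in-time bounds from zero initial data supply, by lower semicontinuity along the Krylov--Bogoliubov time averages), so that the stationary-state cancellation is justified.
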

 % PROOF
 \begin{proof}
We construct a stationary solution by means of the 
Krylov-Bogoliubov's technique, which provides an invariant measure $\pi_n $ for the Galerkin  dynamics.
Let us consider the Galerkin equation  \eqref{eq-Gal} with vanishing initial data $u_n^{0}=0$. 
Denote by  $\mathcal L(u_n(t;0))$ the law of this solution process  evaluated at  time $t>0$, 
living in  the finite-dimensional space $H_n$.
We construct the sequence of the time averaged measures
\begin{equation}\label{medie-temporali-KB}
\frac 1k\int_0^k  \mathcal L(u_n(t;0)) {\rm d}t, \qquad k\in \mathbb N.
\end{equation}
From  \eqref{p_est_mass} %Proposition  \ref{bound_lemma_mass}  in the Appendix 
we obtain 
\[
\sup_{t\ge 0} \mathbb E \left[\|u_n(t;0)\|_H^2\right]
 \le \|P_n \Phi \|^{2}_{L_{HS}(Y,H)}C \gamma^{-1}.
\]
Thus, by Chebychev's inequality we find that  the sequence of the time averaged measures
\eqref{medie-temporali-KB}
is tight in  $H_n$. Hence, there exists a  converging subsequence  (as $k_j\to\infty$) and it is a classical result to show that 
the limit is an invariant measure for the finite-dimensional Galerkin dynamics. Denote it by $\pi_n$.
With this construction, by  the definition \eqref{medie-temporali-KB}  of the  time averaged measures and 
by Propositions \ref{bound_lemma_mass} and \ref{bound_lemma_energy}  we obtain the following
%uniform 
bounds for the moments with respect to $\pi_n$ of the powers of the mass,  the energy and the modified energy:
\begin{equation}\label{massa-p-n-staz}
\int_{H_n}{\mathcal M}(v)^p {\rm d} \pi_n(v)\le \|P_n\Phi \|^{2p}_{L_{HS}(Y,H)}C \gamma^{-p},
\end{equation}
\begin{equation}\label{energy-p-n-staz}
\int_{H_n}  {\mathcal E}_\alpha(v)^p{\rm d} \pi_n(v) 
\le 
C \phi_{\alpha}(d, \beta,\sigma,\gamma,P_n\Phi)^{p}\gamma^{-p},
\end{equation}
for some  constant $C$ independent of $n$. 

Now we consider the  solution  of the Galerkin equation with initial data of law $\pi_n$; 
it is a stationary solution and  thanks to the two latter estimates on $\pi_n$, 
by Propositions \ref{bound_lemma_mass}  and \ref{bound_lemma_energy} we obtain  the estimates \eqref{Gal_uniform-mass-p} and \eqref{Gal_uniform-energy}, respectively. 
The identity  \eqref{Gal_uniform-mass} comes from \eqref{ito-base}, due to stationarity.

The bound  \eqref{Gal_sup-est-V} comes from \eqref{sup_est_V_def},  
by estimating the initial data by means of \eqref{massa-p-n-staz} and \eqref{energy-p-n-staz} and bearing in mind  the definition of the function \eqref{phi_alpha}.
\end{proof}

 Thanks to the estimates of the previous Proposition we construct a  stationary martingale solution to the 
  stochastic NLS equation \eqref{EQSgen}
  and we provide  power moments estimates (see, e.g,  \cite{FG,ChowK}
  for similar results in the case of the Navier-Stokes equations).

% PROPOSITION
 \begin{proposition}\label{prop-station}
  Under the Assumptions  \ref{ass-sigma} and \ref{ass_G}, for any $\gamma>0$
  there exists a stationary martingale solution 
  $\bigl({\tilde\Omega}_\gamma, {\tilde\F}_\gamma, {\tilde{\mathbb P}}_\gamma, {\tilde W}_\gamma,\tilde{\Filtration}_\gamma, U_\gamma \bigr)$ 
   of equation \eqref{EQSgen}  such that  for every $t\ge 0$ and $p\ge 1$ 
\begin{align}
{\tilde \E}_\gamma {\mathcal M}(U_\gamma(t)) &=\frac1{2\gamma} \|\Phi\|_{L_{HS}(Y,H)}^2
 \label{st-massa=}
\\   
 {\tilde \E}_\gamma [ \mathcal{M}(U_\gamma(t))^{p}]
 &\le \|\Phi \|^{2p}_{L_{HS}(Y,H)}C_p \gamma^{-p} 
 \label{st-massa-p}
 \\
 {\tilde \E}_\gamma [{ \mathcal E}_\alpha(U_\gamma(t))^p]&\lesssim_{d,\beta,\sigma}\phi_{\alpha}(d,\beta, \sigma,\gamma, \Phi)^p\gamma^{-p} \qquad
 \text{ for } \alpha\in\{-1,1\}.
        \label{st-energy-p}
 \end{align}
Moreover, for every finite $T>0$, 
\begin{multline}\label{U_gamma_sup-est-V}
 \tilde{\mathbb{E}}_\gamma \left[\sup_{0\le t\le T}\|U_\gamma(t)\|^{2p}_{\EA} \right]
\lesssim_{d, \sigma, p,T} 
\\
(\mathbbm{1}_{\alpha=-1}+ \mathbbm{1}_{\alpha=1} e^{C p \gamma  T})
\left(1+ \phi_{\alpha}(d,\beta, \sigma,\gamma,  \Phi)^p \gamma^{-p}+
 \|\Phi\|_{L_{HS}(Y,\EA)}^{2p(1+\sigma)}
 + \mathbbm{1}_{\alpha=1} \|  \Phi\|_{L_{HS}(Y,H)}^{2p(1+\frac{2\beta\sigma}{2\beta-\sigma d})}\right).
\end{multline} 
\end{proposition}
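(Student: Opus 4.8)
The plan is to pass to the limit $n\to\infty$ in the family of stationary solutions $u^{st}_n$ of the Galerkin equation \eqref{eq-Gal} provided by Proposition \ref{prop-u-staz-Galerkin}, using the uniform (in $n$) estimates \eqref{Gal_uniform-mass-p}--\eqref{Gal_sup-est-V}, together with a tightness/Skorokhod argument adapted to the stationary setting. First I would fix $\gamma>0$ and consider the laws of the whole trajectories $u^{st}_n$ on a suitable path space. The natural choice is $\mathcal X := C_w([0,T];\EA)\cap L^2(0,T;{\mathbb L}^{2+2\sigma})$ (or its analogue with the weak-$\EA$ topology), and one checks tightness of the laws $\{\mathcal L(u^{st}_n)\}_n$ on $\mathcal X$: the uniform bound on $\mathbb E\sup_{[0,T]}\|u^{st}_n(t)\|^{2p}_\EA$ from \eqref{Gal_sup-est-V} controls the ``static'' part, while equation \eqref{eq-Gal} gives a uniform bound on a fractional Sobolev norm in time of $u^{st}_n$ with values in $\EAdual$ (the drift $-\im A u_n+\im\alpha P_nF(u_n)+\gamma u_n$ is bounded in $\EAdual$ by \eqref{F-grandi} and the mass/energy estimates, and the stochastic convolution term is handled by Lemma \ref{lemma-PhiW} and \eqref{bound_Phi_Pn}); then the compactness criterion recalled in Appendix \ref{section-compactness} applies. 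I would then invoke the Skorokhod representation theorem to obtain, on a new probability space $(\tilde\Omega_\gamma,\tilde\F_\gamma,\tilde{\mathbb P}_\gamma)$, random variables $\tilde u_n$ and $\tilde U_\gamma$ with $\mathcal L(\tilde u_n)=\mathcal L(u^{st}_n)$ and $\tilde u_n\to \tilde U_\gamma$ in $\mathcal X$, $\tilde{\mathbb P}_\gamma$-a.s., along with the corresponding Wiener processes $\tilde W_n$, $\tilde W_\gamma$; the filtration $\tilde\Filtration_\gamma$ is the augmented filtration generated by $\tilde U_\gamma$ and $\tilde W_\gamma$.

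The next block of the proof is to identify $\tilde U_\gamma$ as a martingale solution, i.e. to pass to the limit in the Galerkin identity. This is by now a standard martingale argument: one shows that $\tilde W_\gamma$ is a $\tilde\Filtration_\gamma$-cylindrical Wiener process and that the process
\[
M_n(t):=\tilde u_n(t)-\tilde u_n(0)-\int_0^t\bigl[\im A\tilde u_n(s)-\im\alpha P_nF(\tilde u_n(s))-\gamma\tilde u_n(s)\bigr]\df s
\]
equals $P_n\Phi \tilde W_n(t)$, hence is a square-integrable $\EAdual$-valued (in fact $H$-valued) martingale with the correct quadratic variation; the a.s.\ convergence in $\mathcal X$ plus uniform integrability coming from the moment bounds lets one pass to the limit in each term (for the nonlinear term one uses $\tilde u_n\to\tilde U_\gamma$ in $L^2(0,T;{\mathbb L}^{2+2\sigma})$ together with \eqref{stimaF-lipschitz} and uniform $L^{2+2\sigma}$-bounds, and $P_n\to I$), concluding that $(\tilde\Omega_\gamma,\tilde\F_\gamma,\tilde{\mathbb P}_\gamma,\tilde W_\gamma,\tilde\Filtration_\gamma,\tilde U_\gamma)$ satisfies \eqref{eqn-ItoFormSolution} on $[0,T]$. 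Since all estimates are uniform in $T$ and the construction is consistent, a diagonal argument over $T=1,2,\dots$ yields a solution on $[0,+\infty)$; the regularity $\tilde U_\gamma\in C_w([0,\infty);\EA)$ and $H$-continuity follow from the bounds and the equation as in Remark \ref{rem-continuita-nella-def}.

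Then I would prove stationarity of $\tilde U_\gamma$. Each $u^{st}_n$ is stationary, so for every finite collection of times $t_1<\dots<t_k$ and every $h>0$ the laws of $(u^{st}_n(t_1+h),\dots,u^{st}_n(t_k+h))$ and $(u^{st}_n(t_1),\dots,u^{st}_n(t_k))$ on $\EA^k$ coincide; this property is preserved under the a.s.\ convergence in $\mathcal X$ (evaluation at a fixed time is continuous from $C_w([0,T];\EA)$ into $\EA$ with the weak topology, and the tightness/moment bounds upgrade weak convergence of finite-dimensional marginals appropriately), so $\tilde U_\gamma$ has the same finite-dimensional distributions shifted in time, i.e. it is a stationary process in $\EA$. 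Finally, the moment estimates \eqref{st-massa=}--\eqref{U_gamma_sup-est-V} are inherited from their Galerkin counterparts: \eqref{st-massa-p}, \eqref{st-energy-p} and \eqref{U_gamma_sup-est-V} follow from \eqref{Gal_uniform-mass-p}, \eqref{Gal_uniform-energy} and \eqref{Gal_sup-est-V} by lower semicontinuity of the respective norms/functionals (the $\EA$-norm and $\mathcal E_\alpha$ are lower semicontinuous along the convergence in $\mathcal X$, while $\mathbb L^{2+2\sigma}$-convergence handles the nonlinear part of $\mathcal E_\alpha$) together with Fatou's lemma and the fact that $\|P_n\Phi\|\le\|\Phi\|$; the identity \eqref{st-massa=} is obtained by applying the Itô formula to $\mathcal M(\tilde U_\gamma(t))$ (using $\mathrm d\mathcal M=-2\gamma\mathcal M\,\df t+\|\Phi\|^2_{L_{HS}(Y,H)}\,\df t+\text{mart.}$, the imaginary terms contributing zero to the real part) and taking expectations, which by stationarity forces $2\gamma\,\tilde\E_\gamma\mathcal M(\tilde U_\gamma(t))=\|\Phi\|^2_{L_{HS}(Y,H)}$.

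\textbf{Main obstacle.} The delicate point is the passage to the limit in the focusing case: there the energy $\mathcal E$ is not coercive and one must work with the modified energy $\mathcal E_1$, whose moment bound \eqref{Gal_uniform-energy} and the $\sup$-bound \eqref{Gal_sup-est-V} carry the exponential factor $e^{Cp\gamma T}$; one has to make sure these are genuinely $n$-uniform (they are, being the Galerkin estimates) and that lower semicontinuity of $\mathcal E_1$ — which combines the weakly lower semicontinuous $\tfrac12\|\cdot\|_\EA^2$ and $G\|\cdot\|_H$ terms with the $\mathbb L^{2+2\sigma}$-continuous subtracted term — is correctly used so that no spurious loss occurs when taking $n\to\infty$. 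A secondary technical care is verifying that the stochastic term in the Skorokhod copy is indeed $P_n\tilde\Phi\tilde W_n$ with the right filtration-adaptedness, and that the cylindrical Wiener structure passes to the limit; this is standard but must be stated carefully since the noise does not depend on $\gamma$ and one wants the estimates clean for the subsequent inviscid limit $\gamma\to0$.
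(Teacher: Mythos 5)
Your proposal follows essentially the same architecture as the paper's proof: tightness of the laws of the stationary Galerkin solutions $u_n^{st}$ via the uniform bound \eqref{Gal_sup-est-V} plus a time-regularity bound on the drift and the stochastic convolution, Prokhorov and the Jakubowski--Skorokhod representation, identification of the limit as a martingale solution by the martingale method with the quadratic variation $t\|P_{n_j}\Phi\|^2_{L_{HS}(Y,H)}$ and the martingale representation theorem, inheritance of stationarity from convergence in $C_w([0,\infty);\EA)$, and inheritance of the moment bounds \eqref{st-massa-p}--\eqref{U_gamma_sup-est-V} by lower semicontinuity and Fatou together with $\|P_n\Phi\|\le\|\Phi\|$. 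Two cosmetic deviations (Hölder norms $C^a([0,k];\EAdual)$ in the paper versus a fractional Sobolev norm in time in your sketch, and $L^{2+2\sigma}$ rather than $L^2$ integrability in time for the ${\mathbb L}^{2+2\sigma}$ component of the path space) do not affect the argument.

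The one genuinely different step is your derivation of the exact identity \eqref{st-massa=}. You propose applying the It\^o formula to $\mathcal M(U_\gamma(t))$ directly at the level of the limiting martingale solution and using stationarity to read off $2\gamma\,\tilde\E_\gamma\mathcal M=\|\Phi\|^2_{L_{HS}(Y,H)}$. This is morally right, but it requires a generalized It\^o formula for $\|u\|_H^2$ valid for a process with paths only in $C_w([0,T];\EA)$ whose drift contains $F(u)\in{\mathbb L}^{\frac{2+2\sigma}{1+2\sigma}}$, i.e.\ one must justify the pairing $\langle u,F(u)\rangle$ and the chain rule in the extended variational framework; this is not automatic for a weak (martingale) solution and is precisely the technical point the paper sidesteps. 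The paper instead passes to the limit in the finite-dimensional identity \eqref{Gal_uniform-mass}: the a.s.\ convergence $v_j\to v$ in $C([0,\infty);H)$ gives $\mathcal M(v_j(t))\to\mathcal M(v(t))$ pathwise, and the second-moment bound \eqref{Gal_uniform-mass-p} with $p=2$ gives uniform integrability, so the expectations converge and the equality survives in the limit. If you keep your route, you must either prove the It\^o formula for the limit solution or revert to the limiting argument; otherwise the equality in \eqref{st-massa=} (as opposed to a mere inequality) is not justified.
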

 %PROOF
\begin{proof} The damping parameter $\gamma>0$ is fixed. 
We consider the stationary solutions $\{u_n^{st}\}_{n\in\mathbb N}$ of the Galerkin equation given in Proposition \ref{prop-u-staz-Galerkin} and show that
they are tight in order to consider the limit as $n\to \infty$.  
\\
\underline{Step 1} (tightness)\\
First, we notice that, thanks to 
\eqref{bound_Phi_Pn},  the right-hand sides of  \eqref{Gal_uniform-mass-p}-\eqref{Gal_sup-est-V} can be bounded by quantities independent of $n$.

We consider the following locally convex topological spaces:
\begin{itemize}
 \item
$C([0,+\infty);\EAdual)$  with metric
$d_1(u,v)=\displaystyle\sum_{k=1}^\infty
     \frac
     1{2^k}\frac{\|u-v\|_{C([0,k];\EAdual)}}{1+\|u-v\|_{C([0,k];\EAdual)}}$;
\item 
$L^{2+2\sigma}_{\mathrm{loc}}(0,+\infty; {\mathbb L}^{2+2\sigma} )$ 
with metric 
$d_2(u,v)=\displaystyle\sum_{k=1}^\infty
     \frac 1{2^k}\frac{\|u-v\|_{L^2(0,k;L^{2+2\sigma}(D))}}{1+\|u-v\|_{L^2(0,k;{\mathbb L}^{2+2\sigma})}}$;
\item
$C_w([0,+\infty);\EA)$
 with the topology generated by the family of semi-norms\\
 $\|u\|_{k,v}=\displaystyle\sup_{0\le t\le k}|\langle u(t), v\rangle |$, $k \in
 \mathbb N, v \in \EAdual$.
\end{itemize}
We define the  space
\begin{equation}
\label{Z_space}
\mathcal Z= C([0,+\infty);\EAdual) \cap L^{2+2\sigma}_{\mathrm{loc}}(0,+\infty;{\mathbb L}^{2+2\sigma}) \cap C_w([0,+\infty);\EA).
\end{equation}
It is a locally convex topological space with the topology $\mathcal T$
given by  the supremum of the corresponding topologies.

The tightness in $\mathcal Z$ of the laws of the processes  $u^{st}_n$ defined on the time interval 
$[0,+\infty)$ is equivalent  to the  tightness in 
\begin{equation}
\label{Z_space_k}
\mathcal Z_k=C([0,k];\EAdual) \cap L^{2+2\sigma}(0,k; {\mathbb L}^{2+2\sigma}) \cap C_w([0,k];\EA)
\end{equation}
(see the Appendix \ref{section-compactness} for more details on $\mathcal Z_k$)
for any $k \in\mathbb N$ of the laws of the processes  $u^{st}_n$ 
defined on the time interval $[0,k]$.
Therefore  for every $k\in\mathbb N$ we work on the time interval  $[0,k]$ and prove that 
the family of the laws of   $\{u^{st}_n\}_n$ is tight in $\mathcal Z_k$. 

Using
the compactness result \eqref{compact-embedding2} of  Appendix \ref{section-compactness}  it is enough to show that there exists  $a\in(0,\frac 12) $ such that the following holds: 
for any $\varepsilon>0$ there exists $R_\varepsilon$ such that
\begin{equation}\label{tightness-Gal-Zk}
\inf_n \Prob\left(\|u^{st}_n\|_{L^\infty(0,k;\EA)}+\|u^{st}_n\|_{C^{a}([0,k];\EAdual)}\le R_\varepsilon \right)\ge 1-\varepsilon.
\end{equation}
If
\begin{equation}\label{stima-tight-infty}
\sup_n \E \left(\sup_{0\le t\le k}\|u^{st}_n(t)\|_{\EA}\right)<\infty
\end{equation}
and
\begin{equation}\label{stima-tight-holder}
\sup_n \E\|u^{st}_n\|_{C^a ([0,k];\EAdual)}<\infty,
\end{equation}
 then by Chebyshev's inequality we obtain \eqref{tightness-Gal-Zk}.
 
The first bound \eqref{stima-tight-infty} comes from \eqref{Gal_sup-est-V} and \eqref{bound_Phi_Pn}.
To get the other bound \eqref{stima-tight-holder}  we write the equation in the integral form
 \begin{equation}\label{eq-integrale-Gal-st}
 u^{st}_n(t)=u^{st}_n(0) + \im \int_0^t  A  u^{st}_n(r) {\rm d}r - \im  \alpha\int_0^t P_n F(u^{st}_n(r)) {\rm d}r -\gamma \int_0^t u^{st}_n(r) {\rm d}r+ 
P_n \Phi  W(t).
 \end{equation}
Then we  estimate the H\"older norm for the  three integrals as follows.
Set $J_{n,1}(t)=\int_0^t A u^{st}_n(r) {\rm d}r$; we have
\[
\E \sup_{0\le s<t\le k}\frac{ \|J_{n,1}(t)-J_{n,1}(s)\|_\EAdual }{(t-s)^{a}}
\le
\left(\E \sup_{0\le s<t\le k}\frac{ \|J_{n,1}(t)-J_{n,1}(s)\|^2_\EAdual }{(t-s)^{2a}} \right)^{\frac 12}
\]
and
\[
\frac{ \|J_{n,1}(t)-J_{n,1}(s)\|^2_\EAdual }{(t-s)^{2a}}
\le 
\frac{ \left(\int_s^t  \|A u^{st}_n(r)\|_\EAdual  {\rm d}r \right)^2 }{(t-s)^{2a}}
\le
\frac{\left(\int_s^t   \|A u^{st}_n(r)\|^2_\EAdual \rm d r\right)(t-s)}{(t-s)^{2a}}.
\]
So
\begin{equation}\label{Jn1}
\E \sup_{0\le s<t\le k}\frac{ \|J_{n,1}(t)-J_{n,1}(s)\|_\EAdual }{(t-s)^{a}}
\le k^{\frac 12 -a}
\left(\E \int_0^k   \|A u^{st}_n(r)\|^2_\EAdual \rm d r \right)^{\frac 12}
=  k^{1 -a}  \left(  \int \| v\|^2_\EA{ \rm d}\pi_n(v)\right)^{\frac 12}
\end{equation}
where $\pi_n$ is the marginal distribution at any fixed time, i.e. the law of $u^{st}_n(t)$.

Set $J_{n,2}(t)=\int_0^t P_n F( u^{st}_n(r)) {\rm d}r$.
We have
\[
 \int_s^t \|P_n F(u^{st}_n(r))\|^2_{\EAdual} {\rm d}r
 \le
  \int_s^t \|F(u^{st}_n(r))\|^2_{\EAdual} {\rm d}r
\underset{\text{by }\eqref{F-grandi}}{\le} \int_s^t  \E \| u^{st}_n(r)\|^{2(1+2\sigma)}_{\EA} {\rm d}r.
\]
Proceeding as before, we get
\begin{equation}\label{Jn2}
\E \sup_{0\le s<t\le k}\frac{ \|J_{n,2}(t)-J_{n,2}(s)\|_\EAdual }{(t-s)^{a}}
\le k^{1 -a}  \left(  \int \| v\|^{2(1+2\sigma)}_\EA{ \rm d}\pi_n(v)\right)^{\frac 12}.
\end{equation}
Set $J_{n,3}(t)=\int_0^t u^{st}_n(r) {\rm d}r$; then the estimate
\begin{equation}\label{Jn3}
 \E \sup_{0\le s<t\le k}\frac{ \|J_{n,3}(t)-J_{n,3}(s)\|_\EAdual }{(t-s)^{a}}
\le k^{1 -a}  \left(  \int \| v\|^{2}_\EA{ \rm d}\pi_n(v)\right)^{\frac 12}
\end{equation}
is obtained exactly  as for the first integral.
\\
Set $J_{n,4}(t)=P_n\Phi W(t)$; then from \eqref{PhiW-Holder}  we know that
for any $a\in (0,\frac 12)$ we have
\begin{equation}\label{Jn4}
 \E \|J_{n,4}\|_{C^a([0,k];\EA)}\lesssim_{a,k} \|P_n \Phi\|_{L_{HS}(Y,\EA)}.
\end{equation}

Now we  collect the previous estimates. From  Proposition \ref{prop-u-staz-Galerkin} 
we know that  the right hand sides of \eqref{Jn1}-\eqref{Jn3} 
are estimated by an expression which depends  on $n$ throught 
$ \|P_n \Phi \|_{L_{HS}(Y,V)}$; but this quantity is uniformly bounded in $n$, thanks to  \eqref{bound_Phi_Pn}. 
The same applies to \eqref{Jn4}.
Hence
 we obtain  \eqref{stima-tight-holder}.
% .
\\
\underline{Step 2} (convergence)\\
 We can apply the Prokhorov  Theorem and the  Jakubowski
generalization of the Skorohod Theorem to non-metric spaces (see \cite{Jak86}, \cite{Jak98}) to deduce convergence from tightness. 
We infer the existence of a subsequence $\left(u_{n_j}^{st}\right)_{j\in\mathbb{N}}$, a probability space $\left(\tilde{\Omega},\tilde{\F},\tilde{\Prob}\right)$ and random variables  $v_j, v:\tilde{\Omega} \rightarrow \mathcal Z$ with the law of $v_j$ equal to the law of $u^{st}_{n_j}$ and 
\begin{equation}
\label{convergencev_n}
v_j\to v  \ \quad \tilde{\Prob}-\text{a.s.} \quad  \text{in $\mathcal Z\quad $ as } j\to \infty.
\end{equation}
Moreover, since $V$ is compactly embedded in $H$, $v_j\to v$ $\tilde{\Prob}$-a.s. in $C([0, \infty);H)$ as  $j\to \infty$ and therefore $v\in \mathcal Z\cap C([0,+\infty);H)$.

Since each $v_j$ has the same law as $u^{st}_{n_j}$, it is a martingale solution to the Galerkin problem \eqref{eq-Gal}. Thus, each process 
\begin{equation*}
M_j(t):= v_j(t)-v_j(0)+\int_0^t \left[ -\im A  v_j(s)+\im  \alpha P_{n_j} F(v_j(s)) +\gamma v_j (s) \right] \,{\rm d}s
\end{equation*}
is a $H$-valued continuous martingale w.r.t. the filtration $\tilde{\mathbb{F}}_j:=\{\tilde{\F}_{j,t}\}_{t \in [0, \infty)}$, where $\tilde{\F}_{j,t}:=\sigma\left(v_j(s): s \le t\right)$. Its quadratic variation 
 is $\langle \langle M_j\rangle \rangle(t)=t \|P_{n_j}\Phi\|^2_{L_{HS}(Y,H)}$. 
Proceeding as in \cite[Lemma 6.2]{BHW-2019} one can prove that 
\begin{equation*}
\lim_{j\to\infty}\langle M_j(t)-M(t), \psi\rangle= 0, \qquad \tilde{\mathbb{P}}-a.s.,
\end{equation*}
for any $\psi \in \EA$ and $t \in [0,T]$, where 
\begin{equation}
M(t):= v(t)-v(0)+\int_0^t \left[ -\im A  v(s)+\im  \alpha F(v(s)) +\gamma v(s) \right] \,{\rm d}s. 
\end{equation}
It is easy to prove 
that $M$ is an $H$-valued continuous martingale w.r.t. the filtration $\tilde{\mathbb{F}}:=\{\tilde{\F}_{t}\}_{t \in [0, \infty)}$, where $\tilde{\F}_{t}:=\sigma\left(v(s): s \le t\right)$. 
Its quadratic variation is $\langle \langle M\rangle \rangle(t)=t \|\Phi\|^2_{L_{HS}(Y,H)}$.
Therefore, the Martingale Representation Theorem, see  \cite{dpz}, yields the 
existence of a cylindrical Wiener processes $\tilde{W}$ on $Y$ with respect to a standard extension of the original filtration, still denoted by $\tilde{\mathbb{F}}$ for simplicity, 
%KALLENBERG 2021, P.359: For filtrations $\mathcal{F}$, $\mathcal{G}$ on a common probability space, we say that $\mathcal{G}$ is a standard extension of $\mathcal{F}$ if $\mathcal{F}_t \subset \mathcal{G}_t \underset{\mathcal{F}_t}{\bot} \mathcal{F}, t \ge 0$. These are the minimal conditions ensuring that $\mathcal{G}$ will preserve the conditioning and adaptedness properties of $\mathcal{F}$. //CONCRETAMENTE COME SI COSTRUISCE LA STANDARD EXTENSION: vedi proof of Thm 19.13 Kallenberg: sia $M$ una martingale w.r.t. the filtration $\F$. Considero un Wiener process $W$ w.r.t. the generated filtration $\mathcal{X}$ e indipendente da $\F$. Definisco $\mathcal{G}:\{\mathcal{G}_t\}_t$ come $\mathcal{G}_t: \F_t \lor \mathcal{X}_t$. Questa \'e una standard extension of $\F$ and $\mathcal{X}$.
such that
\begin{align}
M(t)=\int_0^t \Phi \df \tilde{W}(s)
\end{align}
for $t\in [0,T].$
Therefore the system $\left(\tilde{\Omega},\tilde{\mathcal{F}},\tilde{\Prob},\tilde{W},\tilde{\Filtration},v\right)$ is a martingale solution to \eqref{EQSgen} whose paths belong
$\tilde{\mathbb{P}}$-a.s. to the space $\mathcal Z$, as already shown at the beginning of the proof.
%Since $\EA$ is compactly embedded in $H$,  from $v \in C_w([0,+\infty);\EA)$ is follows that $v\in C([0,+\infty);H)$ too.

The limit is  a stationary process in $\EA$. Indeed the weak and the strong Borel subsets of $\EA$ coincide. 
Therefore stationarity in $\EA$ is a consequence of the stationarity of the Galerkin sequence $v_j$  and  its 
convergence  in $C_w([0,+\infty);\EA)$.
\\
\underline{Step 3} (relationships)
\\
The estimates \eqref{st-massa-p}-\eqref{st-energy-p} are inherited from the same estimate for the Galerkin sequence, given in 
Proposition \ref{prop-u-staz-Galerkin}, because of the uniform bound \eqref{bound_Phi_Pn} 
on the Hilbert-Schmidt norm of $P_{n_j} \Phi$.

As far as the mass is concerned,  we consider the   equality \eqref{Gal_uniform-mass}, now written as 
\begin{equation}\label{=energia-vj}
\tilde{\E} [{\mathcal M}(v_j(t))] =\frac1{2\gamma} \|P_{n_j} \Phi\|_{L_{HS}(Y,H)}^2.
\end{equation}
Since %$\tilde{\mathbb{P}}$-a.s. $v_j \rightarrow v$ in $C_w([0,+\infty);\EA)$ and $\EA$ is compactly embedded in $H$, then   
$\tilde{\mathbb{P}}$-a.s. $v_j \rightarrow v$ in $C([0,+\infty);H)$, in particular for fixed $t$ we have 
 \[
 {\mathcal M}(v_j(t)) \to {\mathcal M}(v(t))
 \]
pathwise, as $j\to\infty$.
From \eqref{Gal_uniform-mass-p} and \eqref{bound_Phi_Pn} we know that
\[
\sup_j \tilde \E  [ \mathcal{M}(v_j(t))^{2}]
<\infty,
\]
so the sequence $ \mathcal{M}(v_j(t))$ is uniformly integrable (with respect to the probability measure $\tilde\Prob$).
Since we already know the pathwise convergence , we get the convergence in the mean
\[
\tilde{\E} [{\mathcal M}(v_j(t))]\to \tilde{\E} [{\mathcal M}(v(t))].
\]
The convergence of the right-hand side of
\eqref{=energia-vj}  towards $\frac1{2\gamma} \|\Phi\|_{L_{HS}(Y,H)}^2$  is trivial. This proves identity \eqref{st-massa=}.

Estimate \eqref{U_gamma_sup-est-V} is inherited from the same estimate for the Galerkin sequence \eqref{Gal_sup-est-V}.

Finally, to  point out that the  stationary solution depends on $\gamma$ we will denote it 
by $(\tilde{\Omega}_\gamma,\tilde{\F}_\gamma,\tilde{\mathbb P}_\gamma,\tilde{W}_\gamma,\tilde{\Filtration}_\gamma,U_\gamma)$ from now on.
 \end{proof}

%REMARK
\begin{remark}\label{oss-generale-esistenza}
 The proof of Proposition \ref{prop-station} relies on two properties.
 On the one hand, 
we used the a priori estimates of the Galerkin approximating sequence, coming from the 
 Hamiltonian structure
 of the NLS equations (see the proof of Proposition \ref{prop-Galerkin-esistenza_bis}); 
 on the other hand, we used the compact embeddings that are the basis of the tightness argument. 
 Unlike the majority of the papers in the literature, we do not use the Strichartz estimates 
 to infer the existence of solutions; this technique  first appeared in \cite{BHW-2019} in the stochastic setting.
Hence  we do not require strong restrictions on the power $\sigma$ and the spatial dimension $d$.

 With this procedure the results are obtained in any space dimension 
 and we impose on the domain only the assumptions needed to have compact embeddings: 
 e.g. we can consider  bounded domains in $\mathbb{R}^d$, $d \ge 1$,  
 as well as  $d$-dimensional compact Riemannian manifolds (see Appendix \ref{domini}). 
 The drawbacks of this approach are that we construct a martingale solution, 
 that is a weak solution in the probabilistic sense, and that we obtain solutions 
 with a.e. path in $C_w([0,+\infty);\EA)$ and not in $C([0,+\infty);\EA)$. 
 
 For $d\le 2$ the pathwise uniqueness can be proved  (see some results for $d=2$ in \cite{noi2d});  
 the strong existence and the existence of the Markov transition semigroup follow. The details are given in Section \ref{more_reg_sec}.
 In that case the stationary martingale solution yields the existence of an invariant measure.
 
 Anyway the existence of stationary solutions is sufficient for our pourposes.
\end{remark}

%%%%%%%%%%%%%%%%%%%%%%%%
\section{Inviscid limit}
\label{il}

From now on we set $\Phi_\gamma=\sqrt \gamma \Phi$, that is 
we deal with the stochastic NLS equation
\begin{equation}
\label{eqS-sqrt-gamma}
\,{\rm d}u(t) +[ -\im A u(t)  + \im \alpha F( u(t)) +\gamma   u(t)] \,{\rm d}t=\sqrt\gamma \Phi \, {\rm d}W(t).
\end{equation}
With this choice of the noise intensity we have mean  estimates independent of $\gamma$.
% PROPOSITION
\begin{proposition}\label{Prop-no-gamma}
 Under the Assumptions  \ref{ass-sigma} and \ref{ass_G}, for any $\gamma>0$ 
  there exists a stationary martingale solution 
$\bigl({\tilde\Omega}_\gamma, {\tilde\F}_\gamma, {\tilde{\mathbb P}}_\gamma, {\tilde W}_\gamma,\tilde{\Filtration}_\gamma, U_\gamma \bigr)$ 
of equation \eqref{eqS-sqrt-gamma}  such that  for every $t\ge 0$  and $p\ge 1$ 
\begin{align}\label{uniform-mass}
{\tilde \E}_\gamma [{\mathcal M}(U_\gamma(t))] &=\frac1{2} \|\Phi\|_{L_{HS}(Y,H)}^2
\\   \label{uniform-mass-p}
 {\tilde \E}_\gamma [ \mathcal{M}(U_\gamma(t))^{p}]
 &\lesssim_p \|\Phi \|^{2p}_{L_{HS}(Y,H)}
\\ \label{uniform-energy}
 {\tilde \E}_\gamma [{ \mathcal E}_\alpha(U_\gamma(t))^p]&\lesssim_{d,\beta,\sigma}\phi_{\alpha}(d,\beta, \sigma,1, \Phi)^p, \qquad
 \text{ for } \alpha\in\{-1,1\}.
 \end{align}

Moreover, for every finite $T>0$ and any $\gamma\in (0,1]$,
\begin{equation}\label{sti_tight-gamma}
 \tilde{\mathbb{E}}_\gamma \left[\sup_{0\le t\le T}\|U_\gamma(t)\|^{2p}_{\EA} \right]
\lesssim_{d, \sigma, p,T} 
1+ \phi_{1}(d,\beta, \sigma,1, \Phi)^p.
\end{equation} 

\end{proposition}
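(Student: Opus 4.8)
The plan is to obtain Proposition \ref{Prop-no-gamma} as a direct corollary of Proposition \ref{prop-station} by rescaling the noise. For a fixed $\gamma>0$ the operator $\Phi_\gamma:=\sqrt\gamma\,\Phi$ differs from $\Phi$ only by the multiplicative constant $\sqrt\gamma$, hence it still satisfies Assumption \ref{ass_G}; moreover equation \eqref{eqS-sqrt-gamma} is precisely the abstract equation \eqref{EQSgen} with $\Phi$ replaced by $\Phi_\gamma$. Therefore Proposition \ref{prop-station}, applied with $\Phi_\gamma$ in place of $\Phi$, already produces a stationary martingale solution $\bigl({\tilde\Omega}_\gamma,{\tilde\F}_\gamma,{\tilde{\mathbb P}}_\gamma,{\tilde W}_\gamma,\tilde{\Filtration}_\gamma,U_\gamma\bigr)$ of \eqref{eqS-sqrt-gamma}, together with the estimates \eqref{st-massa=}--\eqref{U_gamma_sup-est-V} in which every occurrence of $\Phi$ is replaced by $\sqrt\gamma\,\Phi$. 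It remains only to rewrite those estimates.

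The one computation that must be checked is the scaling behaviour of the auxiliary function $\phi_\alpha$ defined in \eqref{phi_alpha}. Inspecting its three summands, the powers of $\gamma$ appearing there are exactly tuned so that, using $\|\sqrt\gamma\,\Phi\|_{L_{HS}(Y,\EA)}^{2+2\sigma}\gamma^{-\sigma}=\gamma^{1+\sigma}\gamma^{-\sigma}\|\Phi\|_{L_{HS}(Y,\EA)}^{2+2\sigma}=\gamma\|\Phi\|_{L_{HS}(Y,\EA)}^{2+2\sigma}$ and similarly $\|\sqrt\gamma\,\Phi\|_{L_{HS}(Y,H)}^{2+\frac{4\beta\sigma}{2\beta-\sigma d}}\gamma^{-\frac{2\beta\sigma}{2\beta-\sigma d}}=\gamma\|\Phi\|_{L_{HS}(Y,H)}^{2+\frac{4\beta\sigma}{2\beta-\sigma d}}$, one gets
\[
\phi_\alpha(d,\beta,\sigma,\gamma,\sqrt\gamma\,\Phi)=\gamma\,\phi_\alpha(d,\beta,\sigma,1,\Phi),
\qquad\text{hence}\qquad
\phi_\alpha(d,\beta,\sigma,\gamma,\sqrt\gamma\,\Phi)^p\gamma^{-p}=\phi_\alpha(d,\beta,\sigma,1,\Phi)^p .
\]
Likewise $\tfrac1{2\gamma}\|\sqrt\gamma\,\Phi\|_{L_{HS}(Y,H)}^{2}=\tfrac12\|\Phi\|_{L_{HS}(Y,H)}^{2}$ and $\|\sqrt\gamma\,\Phi\|_{L_{HS}(Y,H)}^{2p}\gamma^{-p}=\|\Phi\|_{L_{HS}(Y,H)}^{2p}$. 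Substituting these identities into the rescaled forms of \eqref{st-massa=}, \eqref{st-massa-p} and \eqref{st-energy-p} yields \eqref{uniform-mass}, \eqref{uniform-mass-p} and \eqref{uniform-energy}, and in particular all three right-hand sides are now independent of $\gamma$.

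For \eqref{sti_tight-gamma} one starts from \eqref{U_gamma_sup-est-V} with $\sqrt\gamma\,\Phi$ in place of $\Phi$ and restricts to $\gamma\in(0,1]$. The prefactor $\mathbbm{1}_{\alpha=-1}+\mathbbm{1}_{\alpha=1}e^{Cp\gamma T}$ is then bounded by $e^{CpT}$, a constant depending only on $p,T$. Inside the bracket the term $\phi_\alpha(d,\beta,\sigma,\gamma,\sqrt\gamma\,\Phi)^p\gamma^{-p}$ equals $\phi_\alpha(d,\beta,\sigma,1,\Phi)^p\le\phi_1(d,\beta,\sigma,1,\Phi)^p$; the two remaining terms $\|\sqrt\gamma\,\Phi\|_{L_{HS}(Y,\EA)}^{2p(1+\sigma)}=\gamma^{p(1+\sigma)}\|\Phi\|_{L_{HS}(Y,\EA)}^{2p(1+\sigma)}$ and $\mathbbm{1}_{\alpha=1}\|\sqrt\gamma\,\Phi\|_{L_{HS}(Y,H)}^{2p(1+\frac{2\beta\sigma}{2\beta-\sigma d})}=\mathbbm{1}_{\alpha=1}\gamma^{p(1+\frac{2\beta\sigma}{2\beta-\sigma d})}\|\Phi\|_{L_{HS}(Y,H)}^{2p(1+\frac{2\beta\sigma}{2\beta-\sigma d})}$ carry nonnegative powers of $\gamma$, so for $\gamma\le1$ they are dominated by $\|\Phi\|_{L_{HS}(Y,\EA)}^{2p(1+\sigma)}$ and $\mathbbm{1}_{\alpha=1}\|\Phi\|_{L_{HS}(Y,H)}^{2p(1+\frac{2\beta\sigma}{2\beta-\sigma d})}$ respectively, each of which is in turn $\le\phi_1(d,\beta,\sigma,1,\Phi)^p$ directly from the definition \eqref{phi_alpha} of $\phi_1$. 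Collecting these bounds (and absorbing $e^{CpT}$) gives the stated estimate $\lesssim_{d,\sigma,p,T}1+\phi_1(d,\beta,\sigma,1,\Phi)^p$.

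There is no genuinely hard step: the proposition is just Proposition \ref{prop-station} with the explicit $\gamma$-dependence absorbed through the homogeneity of the noise. The only points that require a line of care are the scaling identity for $\phi_\alpha$ above and the use of $\gamma\le1$ both to discard the surviving favourable powers of $\gamma$ and to control the exponential factor $e^{Cp\gamma T}$ uniformly in $\gamma$.
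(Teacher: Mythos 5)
Your proposal is correct and follows essentially the same route as the paper: the paper's own proof likewise just invokes Proposition \ref{prop-station} with $\Phi$ replaced by $\sqrt\gamma\,\Phi$ and uses the scaling identity $\phi_{\alpha}(d,\beta,\sigma,\gamma,\sqrt\gamma\,\Phi)=\gamma\,\phi_{\alpha}(d,\beta,\sigma,1,\Phi)$ together with the boundedness of $T$ and $\gamma$ for the last estimate. Your write-up is in fact slightly more explicit than the paper's about how the remaining terms in \eqref{U_gamma_sup-est-V} are absorbed into $\phi_1(d,\beta,\sigma,1,\Phi)^p$ for $\gamma\le 1$, and those absorptions are all valid.
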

%PROOF
\begin{proof}
This is nothing but  Proposition \ref{prop-station} now rewritten with the covariance of the noise 
as in equation \eqref{eqS-sqrt-gamma}. Indeed, 
since the noise has intensity proportional to $\sqrt \gamma$, we have that
the right hand sides of \eqref{st-massa=}-\eqref{st-energy-p} are   independent of $\gamma$.
This is easily checked for 
the equality \eqref{st-massa=} and of the bound \eqref{st-massa-p}. Moreover, by 
the definition \eqref{phi_alpha} of $\phi_\alpha$ we have
\[
\phi_{\alpha}(d,\beta, \sigma,\gamma, \sqrt \gamma\Phi)=\gamma \phi_{\alpha}(d, \beta,\sigma,1,\Phi).
\]
In a  similar  way from \eqref{U_gamma_sup-est-V} we get  \eqref{sti_tight-gamma}, when $T$ and $\gamma$ are bounded.
\end{proof}

These estimates  provide a tighness result for the family $\{U_\gamma\}_{0<\gamma\le 1}$, which is the starting point to analyze the limit as $\gamma\to 0$.
Recall \eqref{Z_space} and \eqref{Z_space_k}; we have the following tightness result for the family $\{U_\gamma\}_{0<\gamma\le 1}$ of stationary solutions.
% PROPOSITION
\begin{proposition}\label{tight-U_gamma}
Under the Assumptions  \ref{ass-sigma} and \ref{ass_G}, there exists a  family $\{U_\gamma\}_{0<\gamma\le 1}$ of
 stationary martingale solutions of equation \eqref{eqS-sqrt-gamma}, whose laws are  tight in $\mathcal Z$.
\end{proposition}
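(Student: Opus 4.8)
The plan is to repeat, with the Galerkin limit $n\to\infty$ replaced by the inviscid limit $\gamma\to 0^+$, the tightness argument of Step~1 in the proof of Proposition~\ref{prop-station}. I would take the family $\{U_\gamma\}_{0<\gamma\le 1}$ furnished by Proposition~\ref{Prop-no-gamma}; the whole point of the scaling $\Phi_\gamma=\sqrt\gamma\,\Phi$ is that the moment bounds \eqref{uniform-mass}--\eqref{sti_tight-gamma} are uniform over $\gamma\in(0,1]$. As there, tightness of the laws of $\{U_\gamma\}$ in $\mathcal Z$ is equivalent to tightness, for each $k\in\mathbb N$, of the laws of the processes restricted to $[0,k]$ in $\mathcal Z_k$, and by the compactness criterion \eqref{compact-embedding2} of Appendix~\ref{section-compactness} it suffices to produce some $a\in(0,\tfrac12)$ for which
\[
\sup_{0<\gamma\le 1}\tilde{\E}_\gamma\Big[\sup_{0\le t\le k}\|U_\gamma(t)\|_\EA\Big]<\infty
\qquad\text{and}\qquad
\sup_{0<\gamma\le 1}\tilde{\E}_\gamma\big[\|U_\gamma\|_{C^{a}([0,k];\EAdual)}\big]<\infty .
\]
Chebyshev's inequality then gives, for every $\varepsilon>0$, a radius $R_\varepsilon$ with $\inf_{0<\gamma\le 1}\tilde{\Prob}_\gamma\big(\|U_\gamma\|_{L^\infty(0,k;\EA)}+\|U_\gamma\|_{C^{a}([0,k];\EAdual)}\le R_\varepsilon\big)\ge 1-\varepsilon$, which is the sought tightness in $\mathcal Z_k$ for every $k$, hence in $\mathcal Z$.

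The first bound is read off directly from \eqref{sti_tight-gamma}, whose right-hand side does not depend on $\gamma\in(0,1]$. For the second, I would write $U_\gamma$ in the integral form
\[
U_\gamma(t)=U_\gamma(0)+\im\int_0^t A U_\gamma(r)\,\df r-\im\alpha\int_0^t F(U_\gamma(r))\,\df r-\gamma\int_0^t U_\gamma(r)\,\df r+\sqrt\gamma\,\Phi W(t)
\]
and bound the $C^{a}([0,k];\EAdual)$-seminorm of each summand term by term, precisely as in \eqref{Jn1}--\eqref{Jn4}. Denoting by $\pi_\gamma$ the (time-independent, by stationarity) law of $U_\gamma(t)$, the three integral terms are dominated by $k^{1-a}\big(\int\|v\|_\EA^{2m}\,\df\pi_\gamma(v)\big)^{1/2}$ with $m=1$ for the dispersive term $\int_0^tA U_\gamma$ and for the damping term $\gamma\int_0^tU_\gamma$ (for which the prefactor $\gamma\le 1$ only helps), and with $m=1+2\sigma$ for the nonlinear term, using $\|F(v)\|_\EAdual\lesssim\|v\|_\EA^{1+2\sigma}$ from \eqref{F-grandi}. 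Since $\int\|v\|_\EA^{2m}\,\df\pi_\gamma(v)=\tilde{\E}_\gamma\big[\|U_\gamma(t)\|_\EA^{2m}\big]\le\tilde{\E}_\gamma\big[\sup_{0\le s\le k}\|U_\gamma(s)\|_\EA^{2m}\big]$, estimate \eqref{sti_tight-gamma} applied with $p=m$ (legitimate for any $p\ge1$) bounds all three uniformly in $\gamma\in(0,1]$. For the stochastic term, $\tilde{\E}\,\|\sqrt\gamma\,\Phi W\|_{C^{a}([0,k];\EAdual)}\le\sqrt\gamma\,\tilde{\E}\,\|\Phi W\|_{C^{a}([0,k];\EA)}\lesssim_{a,k}\sqrt\gamma\,\|\Phi\|_{L_{HS}(Y,\EA)}\le\|\Phi\|_{L_{HS}(Y,\EA)}$ by \eqref{PhiW-Holder} and $\EA\hookrightarrow\EAdual$. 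Summing the four contributions yields the second uniform bound, and hence the tightness of $\{U_\gamma\}_{0<\gamma\le 1}$ in $\mathcal Z$.

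I do not expect any genuine obstacle here: the computation is the same compactness bookkeeping already carried out for the Galerkin sequence, and the only thing to verify is that the $\gamma$-dependence drops out. That is exactly what the rescaled noise $\sqrt\gamma\,\Phi$ buys, through Proposition~\ref{Prop-no-gamma}, together with the trivial remarks that both the damping coefficient $\gamma$ and the multiplier $\sqrt\gamma$ in front of $W$ are $\le1$ on $(0,1]$ and can therefore only sharpen the estimates. The one mild point requiring attention is to invoke \eqref{sti_tight-gamma} with an exponent $2p\ge2(1+2\sigma)$ when estimating the nonlinear term, which is allowed since that estimate is available for every $p\ge1$.
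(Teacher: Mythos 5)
Your proposal is correct and follows essentially the same route as the paper: reduce to tightness in $\mathcal Z_k$, obtain the uniform $L^\infty(0,k;\EA)$ bound from \eqref{sti_tight-gamma}, and control the $C^a([0,k];\EAdual)$ norm term by term from the integral form exactly as in \eqref{Jn1}--\eqref{Jn4}, with the noise contribution handled via \eqref{stima_PhiW-Ca} and $\sqrt\gamma\le1$. The only cosmetic difference is that the paper bounds the moments of the stationary marginal directly by $\tilde\E_\gamma\|U_\gamma(t)\|_\EA^{2m}$ rather than passing through the supremum in time, which is immaterial.
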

% PROOF
\begin{proof}
From Proposition \ref{Prop-no-gamma} we know that  for any $\gamma>0$ the  equation \eqref{eqS-sqrt-gamma}
has a stationary martingale solution 
$\bigl({\tilde\Omega}_\gamma, {\tilde\F}_\gamma, {\tilde{\mathbb P}}_\gamma, {\tilde W}_\gamma,\tilde{\Filtration}_\gamma, U_\gamma \bigr)$.
We prove the tightness in   $\mathcal Z$ along the same lines as in the first step of the proof of Proposition \ref{prop-station}.

First, it is enough to prove that for    every $k \in\mathbb N$
the laws of the processes  $\{U_\gamma\}_{0< \gamma \le 1}$, when 
defined on the time interval $[0,k]$, are tight  in $\mathcal Z_k$. And this holds when 
 for some $a\in (0,\frac 12 )$ we have
\[
\sup_{0< \gamma \le 1} \tilde{\E}_\gamma 
\left(\sup_{0\le t\le k}\|U_\gamma(t)\|_{\EA}+\|U_\gamma\|_{C^a ([0,k];\EAdual)}\right)<\infty.
\]
From  estimate \eqref{sti_tight-gamma}, for $p \ge 1$ we have
\begin{equation}
\label{stima-Lebesgue-tight}
\sup_{0< \gamma \le 1}{\tilde \E}_\gamma \left[ \sup_{0 \le t \le k} \|U_\gamma(t)\|_{\EA}^{p} \right]\le M_{p},
\end{equation}
where $M_p$  is a positive constant depending on   $d,\beta, \sigma,\Phi$.

Writing the equation  \eqref{eqS-sqrt-gamma} in integral form 
\begin{equation}\label{U-somma}
\begin{split}
U_\gamma(t)&=U_\gamma(0)
+ \im \int_0^t A U_\gamma(r) {\rm d}r - \im \alpha \int_0^tF( U_\gamma(r))  {\rm d}r  -\gamma  \int_0^t U_\gamma(r) {\rm d}r+\sqrt\gamma \Phi W(t)
\\&:= U_\gamma(0)+ \im J_1(t)- \im  \alpha J_2(t)-\gamma  J_3(t)+\sqrt\gamma \Phi W(t),
\end{split}
\end{equation}
 in a similar way to \eqref{Jn1}, \eqref{Jn2},  \eqref{Jn3}
 we get, respectively
\begin{equation}
{\tilde \E}_\gamma \sup_{0\le s<t\le k}\frac{ \|J_{1}(t)-J_{1}(s)\|_\EAdual }{(t-s)^{a}}
\le k^{1 -a}  \left( \tilde\E_\gamma \| U_\gamma(t)\|^2_\EA \right)^{\frac 12}
\end{equation}
\begin{equation}
{\tilde \E}_\gamma \sup_{0\le s<t\le k}\frac{ \|J_{2}(t)-J_{2}(s)\|_\EAdual }{(t-s)^{a}}
\le k^{1 -a}  \left(  \tilde\E_\gamma \| U_\gamma(t)\|^{2(1+2\sigma)}_\EA  \right)^{\frac 12}
\end{equation}
\begin{equation}
 {\tilde \E}_\gamma \sup_{0\le s<t\le k}\frac{ \|J_{3}(t)-J_{3}(s)\|_\EAdual }{(t-s)^{a}}
\le k^{1 -a}  \left( \tilde\E_\gamma \| U_\gamma(t)\|^2_\EA \right)^{\frac 12}.
\end{equation}
Merging these estimates with \eqref{stima_PhiW-Ca}, from equation \eqref{U-somma}  and the estimate 
\eqref{stima-Lebesgue-tight} we get that there exists $a\in (0,\frac 12)$ such that 
\[
\sup_{0< \gamma \le 1}{\tilde \E}_\gamma \|U_\gamma\|_{C^a ([0,k];\EAdual)}<\infty.
\]
This concludes the proof of the tightness in  $\mathcal Z_k$.
\end{proof}

 Now we consider the vanishing limit.
%TEO
\begin{theorem}  \label{teoUstaz}
Let  the Assumptions  \ref{ass-sigma} and \ref{ass_G} hold  and consider
the family $\{U_\gamma\}_{0<\gamma\le 1}$ of stationary solutions 
to equation \eqref{eqS-sqrt-gamma}, given in Proposition \ref{Prop-no-gamma}.
Then 
any vanishing sequence has a subsequence $\gamma_j\to 0$ as $j\to\infty$ such that
there exist 
a new filtered probability space 
$\bigl(\hat{\Omega},\hat{\F},\hat{\Prob},\hat{\Filtration}\bigr)$, 
a sequence of $\mathcal Z$-valued stochastic processes ${\hat U}_{j}$ and a 
$\mathcal Z$-valued stochastic process ${\hat U}$ enjoying the following properties:
\begin{enumerate}
\item[$P_1)$]\label{i1}
 ${\hat U}_j$ and ${U}_{\gamma_j}$ have the same law;
\item[$P_2)$]\label{i2}
$\hat{\Prob}$-a.s. the sequence ${\hat U}_{j}$ converges to $\hat U$ in 
$\mathcal Z$ as $j\to \infty$;
\item[$P_3)$]\label{i3}
$\hat U$ fulfills the equation \eqref{eqS};
\item[$P_4)$]\label{i4}
$\hat U\in C([0,\infty);H)\cap C_w([0, \infty);V)$, 
$\hat{\Prob}$-a.s., 
and
\begin{align}
\label{nonbanale}
 &\hat{\mathbb E}{\mathcal M}(\hat U(t)) =\frac1{2} \|\Phi\|_{L_{HS}(Y,H)}^2, \qquad \forall t>0
  \intertext{ and for any $ t>0$  and $p\ge 1$}
  \label{finaleMp}
 &\hat{\mathbb E}[{\mathcal M}(\hat U(t))^p]  \lesssim_p \|\Phi\|^{2p}_{L_{HS}(Y,H)}
 \\ \label{finale-energy}
&{\hat \E} \left[ \mathcal{E}_\alpha(\hat U(t))^p \right]  \lesssim_{d,\beta, \sigma,p}\phi_{\alpha}(d,\beta,\sigma, 1, \Phi)^p
 \qquad \text{ for }\alpha\in\{-1,1\}.
\end{align}
\item[$P_5)$]\label{i5}
$\hat U$ is a stationary process in $V$.
\item[$P_6)$]\label{i6}
The mass  $\mathcal M(\hat U(t))$ is a random variable, constant in time.
\end{enumerate}
\end{theorem}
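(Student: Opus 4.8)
The plan is to combine the tightness result of Proposition~\ref{tight-U_gamma} with the Skorokhod representation theorem and then pass to the limit in the equation, term by term, using the uniform moment bounds of Proposition~\ref{Prop-no-gamma}.

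First I would fix a vanishing sequence $\gamma_j\to 0$. By Proposition~\ref{tight-U_gamma} the laws of $\{U_{\gamma_j}\}_j$ are tight in $\mathcal Z$, so after passing to a subsequence (not relabelled) the Prokhorov theorem gives a weakly convergent subsequence, and the Jakubowski--Skorokhod representation theorem (valid on the non-metric space $\mathcal Z$, as already invoked in Step~2 of the proof of Proposition~\ref{prop-station}) produces a new probability space $(\hat\Omega,\hat\F,\hat\Prob)$ carrying $\mathcal Z$-valued random variables $\hat U_j$ with $\mathrm{Law}(\hat U_j)=\mathrm{Law}(U_{\gamma_j})$ and $\hat U_j\to\hat U$ $\hat\Prob$-a.s. in $\mathcal Z$; this yields $P_1)$ and $P_2)$. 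Since the embedding $V\hookrightarrow H$ is compact and the processes are bounded in $C_w([0,\infty);V)$, the a.s. convergence in $\mathcal Z$ upgrades to a.s. convergence in $C([0,\infty);H)$, giving the first part of $P_4)$; the bound $\hat U\in C_w([0,\infty);V)$ comes from the $\mathcal Z$-component $C_w([0,\infty);V)$. The moment bounds \eqref{nonbanale}, \eqref{finaleMp}, \eqref{finale-energy} follow from the corresponding bounds \eqref{uniform-mass}--\eqref{uniform-energy} on $U_{\gamma_j}$ by equality of laws, pathwise convergence of $\mathcal M(\hat U_j(t))$ and $\mathcal E_\alpha(\hat U_j(t))$ at fixed $t$ (lower semicontinuity suffices for the energy, together with Fatou), and uniform integrability from the higher moments, exactly as in Step~3 of the proof of Proposition~\ref{prop-station}. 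The stationarity $P_5)$ is inherited from the stationarity of each $\hat U_j$ in $V$ together with convergence in $C_w([0,\infty);V)$, and $P_6)$ is the statement that $\mathcal M(\hat U(t))$ is constant in time, which follows from conservation of mass for the limiting deterministic equation once $P_3)$ is established (or directly by passing to the limit in the It\^o formula for the mass, where the stochastic and damping contributions carry a factor $\sqrt{\gamma_j}$ and $\gamma_j$ respectively and vanish).

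The core of the argument is $P_3)$: showing that $\hat U$ solves the deterministic unforced equation \eqref{eqS}. Writing \eqref{eqS-sqrt-gamma} in integral form for $U_{\gamma_j}$ and transporting it to the new space, one has $\hat\Prob$-a.s.
\[
\hat U_j(t)=\hat U_j(0)+\im\int_0^t A\hat U_j(r)\,\df r-\im\alpha\int_0^t F(\hat U_j(r))\,\df r-\gamma_j\int_0^t \hat U_j(r)\,\df r+\sqrt{\gamma_j}\,\Phi \hat W_j(t),
\]
tested against $v\in V$. As $j\to\infty$: the linear term passes to the limit because $\hat U_j\to\hat U$ in $C_w([0,\infty);V)$ so $\langle A\hat U_j(r),v\rangle=(A^{1/2}\hat U_j(r),A^{1/2}v)_H\to(A^{1/2}\hat U(r),A^{1/2}v)_H$ pointwise in $r$, with a uniform-in-$j$ bound from \eqref{stima-Lebesgue-tight} allowing dominated convergence in $r$; the nonlinear term converges because $\hat U_j\to\hat U$ in $L^{2+2\sigma}_{\mathrm{loc}}(0,\infty;\mathbb L^{2+2\sigma})$ together with the local Lipschitz estimate \eqref{stimaF-lipschitz} and the uniform $L^{2+2\sigma}$-moment bounds, so $\int_0^t F(\hat U_j(r))\,\df r\to\int_0^t F(\hat U(r))\,\df r$ in $\mathbb L^{(2+2\sigma)/(1+2\sigma)}\subset V^*$; the damping term is bounded in $V$ uniformly in $j$ and multiplied by $\gamma_j\to 0$, hence vanishes; and $\sqrt{\gamma_j}\,\Phi\hat W_j(t)\to 0$ in $V$ in $L^2(\hat\Prob)$ by \eqref{stima-classica-W}, hence along a further subsequence $\hat\Prob$-a.s. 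Putting these together gives, $\hat\Prob$-a.s. in $V^*$,
\[
\hat U(t)=\hat U(0)+\im\int_0^t A\hat U(r)\,\df r-\im\alpha\int_0^t F(\hat U(r))\,\df r,
\]
which is the integrated form of \eqref{eqS}.

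The main obstacle I anticipate is making the limit passage in the nonlinear term fully rigorous: one must be careful that $L^{2+2\sigma}_{\mathrm{loc}}$ convergence of the paths, which is part of the definition of $\mathcal Z$, really does control $\int_0^t\big(F(\hat U_j(r))-F(\hat U(r))\big)\,\df r$ uniformly on $[0,k]$, and this requires combining \eqref{stimaF-lipschitz} with a H\"older split and the $2p$-th moment bounds on $\|U_\gamma\|_V$ uniform in $\gamma$, plus an extraction of a further subsequence to upgrade $L^2(\hat\Prob)$ convergence to a.s. convergence. A secondary subtlety is that \eqref{eqS} is only claimed pathwise (no probability space is part of a solution notion for the deterministic equation), so one should phrase $P_3)$ as: for $\hat\Prob$-a.e. $\omega$, the path $\hat U(\cdot,\omega)$ solves \eqref{eqS} in the sense of the integrated identity above; the regularity $\hat U\in C([0,\infty);H)\cap C_w([0,\infty);V)$ established in $P_4)$ makes this a meaningful (weak) notion of solution. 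Everything else is a routine repetition of the arguments already carried out for Proposition~\ref{prop-station}.
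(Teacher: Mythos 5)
Your proposal is correct and follows essentially the same route as the paper: tightness from Proposition~\ref{tight-U_gamma}, Prokhorov plus the Jakubowski--Skorokhod representation, term-by-term passage to the limit in the integral identity (with the nonlinear term handled exactly via \eqref{stimaF-lipschitz} and a H\"older split in time, and the damping and noise terms killed by the factors $\gamma_j$ and $\sqrt{\gamma_j}$), uniform integrability for the mass identity \eqref{nonbanale}, lower semicontinuity for the remaining moment bounds, and conservation of mass read off from the limit equation. The only micro-differences (a further subsequence to make the noise term vanish a.s., dominated convergence instead of the $C_w$-seminorm for the linear term) are cosmetic and do not change the argument.
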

% DIMO
\begin{proof}
%1
\underline{Step 1 (convergence)}\\
From Proposition \ref{tight-U_gamma} we know that the family   $\{U_\gamma\}_{0<\gamma\le 1}$ of
 stationary martingale solutions of equation \eqref{eqS-sqrt-gamma} is  tight in $\mathcal Z$.
Then, given any vanishing sequence, by Prokhorov's theorem 
 there exists a (sub)sequence weakly convergent; we denote it by  $\{\gamma_{j}\}_{j=1}^\infty$. 
 And   by the  Jakubowski
generalization of the Skorohod theorem to non-metric spaces  (see \cite{Jak86}, \cite{Jak98}) there exist
a probability basis $(\hat \Omega, \{\hat {\mathcal F}_t\}_t, \hat \Prob)$, random variables
 ${\hat U}_j,\hat U:\hat \Omega\to \mathcal Z$ such that
${\hat U}_j$ and $U_{\gamma_j}$ have the same law and
\[
\lim_{j\to \infty}{\hat U}_j=\hat U \qquad \hat {\Prob}-a.s. \text{ in } \mathcal Z.
\]
Moreover the limit process $\hat U$ is stationary  in $\EA$.
These prove $P_1)$, $P_2)$ and  $P_5)$.
%2
\\
\underline{Step 2 (the limit equation)}
\\
Now we look for the equation fulfilled by the limit process $\hat U$.
Fix $t>0$ arbitrarily. We have
\[
{\hat U}_j(t)-{\hat U}_j(0) -\im \int_0^t  A {\hat U}_j(s) \,{\rm d}s + \im \alpha \int_0^t F( {\hat U}_j(s)) \,{\rm d}s
=-\gamma_j \int_0^t   {\hat U}_j(s) \,{\rm d}s+\sqrt{\gamma_j} \Phi \hat W_j(t)
\]
$\hat\Prob$-a.s. Also the following results hold pathwise and we do not specify it any more.
The r.h.s. vanishes as $j\to \infty$, in the ${\mathbb L}^{2+2\sigma}$-norm.
For the l.h.s. we have
\begin{align*}
{\hat U}_j(t)&\to {\hat U}(t)  \quad \text{ in }\EAdual
\\
\int_0^t  \langle A {\hat U}_j(s), v \rangle \,{\rm d}s
& \to \int_0^t  \langle A {\hat U}(s), v \rangle \,{\rm d}s     \qquad    \text{ for any  } v \in \EA
\\
 \int_0^t F( {\hat U}_j(s)) \,{\rm d}s & \to \int_0^t F( {\hat U}(s)) \,{\rm d}s  \quad  \text{ in } {\mathbb L}^{\frac{2+2\sigma}{1+2\sigma}}.
\end{align*}
Let us check the two latter convergences, since the first one is trivial. We have
\[
\left|\int_0^t  \langle A {\hat U}_j(s) - A {\hat U}(s), v \rangle \,{\rm d}s\right|
=
\left| \int_0^t  \langle  {\hat U}_j(s) -  {\hat U}(s),A v \rangle \,{\rm d}s\right|
\le
t \sup_{0\le s\le t} |\langle  {\hat U}_j(s) -  {\hat U}(s),A v \rangle|
\]
We conclude, since $A v\in \EAdual$ and there is convergence in $C_w([0,t];\EA)$.

As far as the nonlinear term is concerned, from  \eqref{stimaF-lipschitz}
\[
\|F({\hat U}_j(s))-F({\hat U}(s))\|_{{\mathbb L}^{\frac{2+2\sigma}{1+2\sigma}}}
\lesssim
  \left( \|{\hat U}_j(s)\|_{{\mathbb L}^{2+2\sigma}}^{2\sigma}+\|{\hat U}(s)\|_{{\mathbb L}^{2+2\sigma}}^{2\sigma} \right) 
  \|{\hat U}_j(s)-{\hat U}(s)\|_{{\mathbb L}^{2+2\sigma}}.
\]
Now, we apply H\"older inequality in time with 
$\frac{1}{2+2\sigma}+\frac{2\sigma}{2+2\sigma}+\frac{1}{2+2\sigma}=1$:
\[\begin{split}
&\|F({\hat U}_j)-F({\hat U})\|_{L^1(0,t;{\mathbb L}^{\frac{2+2\sigma}{1+2\sigma}})}
\\&
\lesssim_\sigma t^{\frac1{2+2\sigma}}
  \left( \|{\hat U}_j\|_{L^{2+2\sigma}(0,t;{\mathbb L}^{2+2\sigma})}
       +\|{\hat U}\|_{L^{2+2\sigma}(0,t;{\mathbb L}^{2+2\sigma})} \right)^{2\sigma} 
  \|{\hat U}_j-{\hat U}\|_{L^{2+2\sigma}(0,t;{\mathbb L}^{2+2\sigma})}.
\end{split}
\]
Thus, for any $t$ we have that  $\hat U$ fulfills the deterministic unforced Schr\"odinger 
equation
\[
\hat U(t)-\hat U(0)-\im \int_0^t A \hat U(s) \,{\rm d}s+\im \alpha \int_0^t F( \hat U(s)) \,{\rm d}s=0
\]
$\hat{\Prob}$-a.s., 
understood as an identity in $\EAdual$.
This proves $P_3)$.
%3
\\
\underline{Step 3 (regularity results)}\\
Since $\EA$ is compactly embedded in $H$,  from $\hat U\in  C_w([0, \infty);V)$ we also get 
$\hat U\in C([0,\infty);H)$.
Moreover, the estimates \eqref{finaleMp}-\eqref{finale-energy} hold true since the bounds \eqref{uniform-mass-p}-\eqref{uniform-energy} still hold for  the limit. 
Indeed, any bounded sequence has a 
subsequence converging in a very weak sense;  
we denote in general  the norm  by $\hat\E\|\cdot\|_X$,  and we know that 
\[
\hat\E\|\hat U\|_X \le \liminf_{j\to\infty} \hat\E\|\hat U_j\|_X.
\] 
%\\
\underline{Step 4 (the identity \eqref{nonbanale})} \\
Fix $t>0$ arbitrarily.
Since $\hat U_j(t)$ converges to $\hat U(t)$ weakly in $\EA$, then it converges strongly in $H$, 
because of the compact embedding $\EA\subset H$. Therefore
\[ \lim_{j\to\infty} {\mathcal M}(\hat U_j(t))
=  {\mathcal M}(\hat U(t)) .
\]
$\hat\Prob$-a.s.
Moreover, by  \eqref{uniform-mass-p} we have
\[
\sup_j \hat \E {\mathcal M}(\hat U_j(t))^2\lesssim
\|\Phi\|^4_{L_{HS}(Y,H)}.
\]
This gives uniform integrability;
 hence from the convergence $\hat\Prob$-a.s. we get  the  convergence in the mean, namely
\[
\hat  \E {\mathcal M}(\hat U_j(t))
\to
\hat  \E {\mathcal M}(\hat U(t)).
\]
Bearing in mind \eqref{uniform-mass},
we get \eqref{nonbanale}.
This show that the limit is not the zero process. Moreover the limit process $\hat U$
depends on the covariance of the noise appearing  in the perturbed  equation \eqref{eqS-sqrt-gamma}.
\\ % 5 
\underline{Step 5 (the mass)}
\\
 The mass is a random variable, constant in time. Indeed
 $\hat \Prob$-a.s. we have
 \[\begin{split}
\mathcal M(\hat U(t))
 &=\mathcal M(\hat U(0))+2\int_0^t  Re\langle \hat U(s),\frac{d\hat U}{ds} (s)\rangle \,{\rm d}s
\\& =\mathcal M(\hat U(0))+2 \int_0^t Re\langle \hat U(s), \im A \hat U(s)\rangle \,{\rm d}s
 -2 \alpha \int_0^t  Re\langle \hat U(s), \im  F( \hat U(s))\rangle \,{\rm d}s.
\end{split} \]
 Both integrals in the r.h.s. are well defined. The first because $\hat U\in L^2(0,t;\EA)$ 
 so $A\hat U\in  L^2(0,t;\EAdual)$ and therefore $\langle \hat U, A \hat U\rangle\in L^1(0,t)$;
the second because 
 $\hat U\in L^{2+2\sigma}(0,t;{\mathbb L}^{2+2\sigma})$ and, by \eqref{F-grandi},  
 $F(\hat U)\in L^{\frac{2+2\sigma}{1+2\sigma}}(0,t; {\mathbb L}^{\frac{2+2\sigma}{1+2\sigma}}) $ so
 $\langle \hat U,   F( \hat U)\rangle\in  L^1(0,t)$. 
 Moreover, they both vanish, since they  involve real parts of imaginary quantities. 
 Hence  the mass is a random variable, constant in time.
 \end{proof}

 Any stationary solution  $\hat U$ constructed in the previous theorem is not a trivial process, i.e. it is not a deterministic 
 function of time, due to the presence of the noise in the approximating equation. 
 In fact we can prove  two results, as in \cite{Sh11}: if there is noise, then the law of the random variable $\hat U(t)$  has no atom in $0$; and if the noise is full, then
  the mass $\frac 12 \|\hat U(t)\|_H^2$ is a random variable whose law 
   admits a density w.r.t. the Lebesgue  measure.

 Now we denote by $l$ the Lebesgue measure on the real line and by $\mu$ the  marginal law of the stationary process  $\hat U$ of  the previous theorem, i.e. 
$\mu$ is the law of the random variable $\hat U(t)$.

 % TEO
\begin{theorem}
\label{mass_density}
Let Assumptions \ref{ass-sigma} and  \ref{ass_G} be in force.
\\
If $\varphi_j \neq 0$ for some  $j\in {\mathbb Z}_0$, then $\mu$ has no atoms in $0$.
\\
If  $\varphi_j \ne 0$ for all $j \in \mathbb{Z}_0$,  then there exists a continuous increasing function $p(r)$ going to zero with $r$ such that 
\begin{equation*}
\mu\left( \mathcal{M}(u) \in \Gamma\right) \le p(l(\Gamma))
\end{equation*}
for any Borel subset $\Gamma$ of $[0,+\infty)$.
\end{theorem}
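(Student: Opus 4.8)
The plan is to reduce both assertions to uniform‑in‑$\gamma$ estimates on the law of the mass of the damped–driven stationary solution, and then to pass to the limit $\gamma\to 0$ exactly as in Theorem~\ref{teoUstaz}. Write $U_\gamma$ for a stationary martingale solution of \eqref{eqS-sqrt-gamma} as in Proposition~\ref{Prop-no-gamma}, $m_\gamma(t):=\mathcal M(U_\gamma(t))=\|U_\gamma(t)\|_H^2$, and $b:=\|\Phi\|_{L_{HS}(Y,H)}^2$. Itô's formula for $\|\cdot\|_H^2$ (the computation behind \eqref{uniform-mass}) yields
\[
\df m_\gamma(t)=\gamma\bigl(b-2m_\gamma(t)\bigr)\,\df t+\df N_\gamma(t),\qquad \langle\langle N_\gamma\rangle\rangle(t)=4\gamma\int_0^t q(U_\gamma(s))\,\df s,
\]
where $N_\gamma$ is a real continuous square‑integrable martingale and $q(u):=\sum_{j\in\mathbb Z_0}\bigl|\Real(u,\Phi e^Y_j)_H\bigr|^2$ satisfies $0\le q(u)\le C_0\|u\|_H^2$ with $C_0:=\sup_{j\in\mathbb Z_0}\varphi_j^2\le b$, and (crucially for the second statement) $q(u)>0$ for every $u\ne 0$ when all $\varphi_j\ne 0$. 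Let $\rho_\gamma$ denote the law of $m_\gamma(t)$, which is independent of $t$ by stationarity. Since $\hat U_j\to\hat U$ in $\mathcal Z$, hence in $C([0,\infty);H)$ by the compact embedding $V\subset H$, we get $\mathcal M(\hat U_j(0))\to\mathcal M(\hat U(0))$ $\hat{\Prob}$‑a.s., so $\rho_{\gamma_j}$ converges weakly to the law $\vartheta$ of $\mathcal M(\hat U(0))$, which by $P_6)$ is also the law of $\mathcal M(\hat U(t))$ for all $t$. I would therefore establish: (i) if some $\varphi_j\ne 0$, then $\sup_{0<\gamma\le1}\rho_\gamma([0,\epsilon))\to 0$ as $\epsilon\to 0^+$; and (ii) if all $\varphi_j\ne 0$, then $\sup_{0<\gamma\le1}\rho_\gamma(\Gamma)\le p(l(\Gamma))$ for some continuous nondecreasing $p$ with $p(0^+)=0$. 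Then the portmanteau theorem applied to the open sets $[0,\epsilon)$ (resp. to open $\Gamma$) gives $\vartheta([0,\epsilon))\le\liminf_j\rho_{\gamma_j}([0,\epsilon))$ (resp. $\vartheta(\Gamma)\le\liminf_j\rho_{\gamma_j}(\Gamma)$), whence $\mu(\{0\})=\vartheta(\{0\})\le\vartheta([0,\epsilon))$ for every $\epsilon$ forces $\mu(\{0\})=0$, and outer regularity of $l$ together with the continuity of $p$ upgrades (ii) to all Borel $\Gamma$.

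For (i) the point is that $b>0$, so the mass equation carries the strictly positive constant drift $\gamma b$ at $m_\gamma=0$, which forbids concentration at the origin. To make this quantitative, fix $\delta>0$ and $\theta\in\bigl(\max\{0,1-\tfrac{b}{2C_0}\},1\bigr)$ (a nonempty interval since $\tfrac{b}{2C_0}>0$) and apply Itô's formula to $(m_\gamma(t)+\delta)^\theta$; the moment bounds \eqref{uniform-mass-p} and $q\le C_0 m_\gamma$ make the $\df N_\gamma$‑term a mean‑zero martingale, so taking expectations and using stationarity to kill the time derivative gives, for $\delta\le 1$,
\[
\bigl[\,b-2(1-\theta)C_0\,\bigr]\,\mathbb E\bigl[(m_\gamma+\delta)^{\theta-1}\bigr]\ \le\ 2\,\mathbb E\bigl[(m_\gamma+\delta)^{\theta}\bigr]\ \le\ 2\bigl(1+\mathbb E[m_\gamma]\bigr)=2+b,
\]
with positive left‑hand coefficient by the choice of $\theta$. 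Letting $\delta\downarrow 0$ (monotone convergence) yields $\sup_{0<\gamma\le1}\mathbb E[m_\gamma^{\theta-1}]<\infty$, and Chebyshev's inequality then gives $\rho_\gamma([0,\epsilon))\le\epsilon^{1-\theta}\mathbb E[m_\gamma^{\theta-1}]\lesssim\epsilon^{1-\theta}$ uniformly in $\gamma$, which is (i).

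For (ii), where now $q(u)>0$ for $u\ne 0$ and, by (i), $U_\gamma(t)\ne 0$ a.s., I would first invoke the occupation‑times formula for the semimartingale $m_\gamma$: for Borel $\Gamma$ and $T>0$,
\[
4\gamma\,\mathbb E\int_0^T\mathbbm{1}_\Gamma(m_\gamma(s))\,q(U_\gamma(s))\,\df s=\int_\Gamma\mathbb E[L^a_T]\,\df a,
\]
where $L^a_T$ is the local time of $m_\gamma$ at level $a$ on $[0,T]$; Tanaka's formula, stationarity and \eqref{uniform-mass} give $\sup_a\mathbb E[L^a_T]\le 2b\gamma T$, so the measure $\nu_\gamma(\Gamma):=\mathbb E[q(U_\gamma(0))\mathbbm{1}_\Gamma(m_\gamma(0))]$ obeys $\nu_\gamma(\Gamma)\le\tfrac b2\, l(\Gamma)$, uniformly in $\gamma$. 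Splitting on $\{q(U_\gamma(0))\ge\eta\}$ and its complement,
\[
\rho_\gamma(\Gamma)\ \le\ \tfrac1\eta\,\nu_\gamma(\Gamma)+\mathbb P\bigl(q(U_\gamma(0))<\eta\bigr)\ \le\ \tfrac{b}{2\eta}\,l(\Gamma)+g(\eta),\qquad g(\eta):=\sup_{0<\gamma\le1}\mathbb P\bigl(q(U_\gamma(0))<\eta\bigr).
\]
To see $g(\eta)\to 0$ as $\eta\to 0^+$, fix $k\ge 1$; since $\varphi_{\pm k}\ne 0$, $q(u)\ge c_k|(u,e_k)_H|^2$ with $c_k:=\min\{\varphi_k^2,\varphi_{-k}^2\}>0$, so it suffices to control $\mathbb P(|(U_\gamma(0),e_k)_H|^2<\eta')$. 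The scalar process $\rho_{\gamma,k}:=|(U_\gamma,e_k)_H|^2$ has the very same structure as $m_\gamma$ — drift $\gamma(\varphi_k^2+\varphi_{-k}^2)-2\gamma\rho_{\gamma,k}$ plus a term $\lesssim_k|(U_\gamma,e_k)_H|\,\|U_\gamma\|_{{\mathbb L}^{2+2\sigma}}^{1+2\sigma}$ (subordinate near $\rho_{\gamma,k}=0$, and integrable by the energy moments \eqref{uniform-energy}), with martingale quadratic‑variation rate between $4\gamma c_k\rho_{\gamma,k}$ and $4\gamma\max\{\varphi_k^2,\varphi_{-k}^2\}\rho_{\gamma,k}$ — so the argument of (i), applied to $(\rho_{\gamma,k}+\delta)^\theta$ with $\theta\in(\tfrac12,1)$, gives $\sup_{0<\gamma\le1}\mathbb E[\rho_{\gamma,k}^{\theta-1}]<\infty$ and hence $\sup_\gamma\mathbb P(\rho_{\gamma,k}<\eta')\lesssim(\eta')^{1-\theta}\to 0$. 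Finally $p(r):=\inf_{\eta>0}\bigl(\tfrac{b}{2\eta}r+g(\eta)\bigr)$ is concave, nondecreasing, with $p(0^+)=0$, and $\sup_{0<\gamma\le1}\rho_\gamma(\Gamma)\le p(l(\Gamma))$; passing to the limit as above finishes the proof.

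The hard part is the small‑mass regime (and, in (ii), the small‑mode regime): the diffusion coefficient of the scalar mass equation vanishes only linearly, $q(U_\gamma)\le C_0 m_\gamma$, so the noise does not render $0$ inaccessible and one cannot simply quote a Feller‑type dichotomy; the estimate must squeeze out precisely the repulsion carried by the constant positive drift, and the admissible exponents $\theta$ shrink to $(\tfrac12,1)$ exactly when a single Fourier mode is forced ($b=C_0$) — the borderline case, still covered by (i). The uniformity in $\gamma$, essential for the limit to go through, is automatic once one notes that dividing the stationary Itô identity by $\gamma$ leaves only $\gamma$‑free constants.
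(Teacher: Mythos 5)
Your treatment of the first assertion is correct and genuinely different from the paper's. The paper (Appendix~\ref{section-dense}, point~(1)) proves absence of atoms at $0$ by showing that the law of $\Real(U_\gamma(0),v)_H$ is absolutely continuous via the local-time identity for a \emph{linear} functional; the resulting density bound carries a factor $\gamma^{-1}$ and is therefore not uniform in $\gamma$ (the uniform small-ball bound, point~(2) there, needs \emph{two} nonvanishing coefficients). Your argument instead applies the stationary It\^o identity to $(\mathcal M(U_\gamma)+\delta)^{\theta}$ and exploits that for the full mass both the drift and the quadratic variation scale like $\gamma$ (the Hamiltonian terms drop out), so dividing by $\gamma$ yields the $\gamma$-free negative-moment bound $\sup_{0<\gamma\le1}\mathbb E[\mathcal M(U_\gamma(0))^{\theta-1}]<\infty$ for $\theta>1-\|\Phi\|^2_{L_{HS}(Y,H)}/(2\sup_j\varphi_j^2)$, hence $\rho_\gamma([0,\epsilon))\lesssim\epsilon^{1-\theta}$ uniformly in $\gamma$ under the sole hypothesis that one $\varphi_j\neq0$. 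This is cleaner than the paper's route and, being uniform, it transfers to the limit measure by portmanteau without further ado. For the second assertion, your occupation-time bound $\mathbb E[q(U_\gamma(0))\mathbbm{1}_\Gamma(\mathcal M(U_\gamma(0)))]\le\tfrac12\|\Phi\|^2_{L_{HS}(Y,H)}\,l(\Gamma)$ is exactly the paper's identity \eqref{identita-L-T} specialized to $y=\mathcal M(U_\gamma)$, and the splitting on $\{q\ge\eta\}$ parallels the paper's splitting on $G_\delta$.

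There is, however, a genuine gap in your control of $g(\eta)=\sup_{0<\gamma\le1}\mathbb P\bigl(q(U_\gamma(0))<\eta\bigr)$. You reduce it to a negative-moment estimate for a single mode $\rho_{\gamma,k}=|(U_\gamma,e_k)_H|^2$ "by the argument of (i)", but that argument does not survive the passage to one mode: the drift of $\rho_{\gamma,k}$ contains, besides $\gamma(\varphi_k^2+\varphi_{-k}^2)-2\gamma\rho_{\gamma,k}$, the term $-2\alpha\Real\bigl[\overline{(U_\gamma,e_k)_H}\,\im(F(U_\gamma),e_k)_H\bigr]$, which you correctly record as $\lesssim_k\sqrt{\rho_{\gamma,k}}\,\|U_\gamma\|_{{\mathbb L}^{2+2\sigma}}^{1+2\sigma}$ but which carries \emph{no} factor $\gamma$ — the Hamiltonian structure cancels the nonlinearity only in the full mass, not mode by mode. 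In the stationary It\^o identity for $(\rho_{\gamma,k}+\delta)^{\theta}$ every "good" term is $O(\gamma)$ while this term contributes an $O(1)$ constant, so after dividing by $\gamma$ one obtains at best $\mathbb E[\rho_{\gamma,k}^{\theta-1}]\lesssim\gamma^{-1}$, and your closing remark that "dividing the stationary It\^o identity by $\gamma$ leaves only $\gamma$-free constants" is false here. Consequently $\mathbb P(\rho_{\gamma,k}<\eta')\lesssim(\eta')^{1-\theta}\gamma^{-1}$ is not uniform in $\gamma$, $g(\eta)\to0$ is not established, and the uniform bound $\sup_\gamma\rho_\gamma(\Gamma)\le p(l(\Gamma))$ — hence the limit statement — does not follow. (This degeneration is consistent with the paper's own point~(1), whose density bound for a single linear functional also blows up like $\gamma^{-1}$.) The repair is the paper's: when all $\varphi_j\neq0$, bound $q(u)\ge c_N^\Phi\sum_{k\le N}|(u,e_k)_H|^2\ge c_N^\Phi\bigl(\|u\|_H^2-(1+\lambda_{N+1}^\beta)^{-1}\|u\|_{\EA}^2\bigr)$ as in \eqref{inverse-Poincare}, so that $\{q(U_\gamma(0))<\eta(\delta)\}$ is contained in $\{\|U_\gamma(0)\|_H\le\delta\}\cup\{\|U_\gamma(0)\|_{\EA}\ge\delta^{-1/2}\}$; the first event is uniformly small by your own step (i) and the second by Chebyshev and \eqref{uniform-energy}. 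With that substitution your proof closes.
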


 The proof follows the lines of  \cite{Sh11}; we postpone the details in Appendix \ref{section-dense}.

\section{More regular solutions}
\label{more_reg_sec}
In this Section we study the regularity of solutions to equation \eqref{eqS} in the cases specified by  the following 
\begin{assumption}
\label{ass_dim}
Let $d \ge 1$. We assume $\beta>\frac d2$ if $d \ne 2$ and $\beta \ge \frac d2$ if $d=2$.
\end{assumption}
Under the Assumption \ref{ass_dim} we can prove the uniqueness of solutions  for the  equation \eqref{EQSgen} 
as well as  \eqref{eqS}
 and we get the conservation of energy for the deterministic equation  \eqref{eqS}.

We start by proving the pathwise uniqueness for solutions to \eqref{eqS}.  When $\gamma=0$ and $\Phi=0$ this gives the  uniqueness for the deterministic equation  \eqref{eqS} as well. 
%PROP
\begin{proposition}
\label{uniq_prop}
Let Assumptions \ref{ass-sigma}, \ref{ass_G}  and  \ref{ass_dim} be in force. 
For any $\gamma\ge 0$ let 
$\bigl({\Omega}_\gamma, {\F}_\gamma, {\mathbb P}_\gamma, {W}_\gamma,\Filtration_\gamma, u^i_\gamma \bigr)$, $i=1, 2$ be two  martingale solutions of equation  \eqref{eqS-sqrt-gamma} with the same  initial data $u^0\in \EA$.
Then 
\begin{equation}\label{pathwise-uniq}
\mathbb{P}_\gamma\left(u^1_\gamma(t)=u^2_\gamma(t) \ \text{for all} \ t \in[0, +\infty) \right)=1.
\end{equation}
\end{proposition}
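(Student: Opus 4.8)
The plan is to prove pathwise uniqueness by a standard energy estimate on the difference of two solutions, exploiting that under Assumption \ref{ass_dim} we have the embedding $\EA = \mathcal D(A^{1/2}) = H^\beta(D)\cap(\ldots) \subset {\mathbb L}^\infty$ (or at least $\EA\subset{\mathbb L}^p$ for every $p<\infty$ when $d=2\beta$), which makes the nonlinearity Lipschitz on bounded sets of $\EA$ in the $H$-norm. First I would set $w = u^1_\gamma - u^2_\gamma$. Since both solutions share the same Wiener process $W_\gamma$ and the noise is additive, $w$ solves a random PDE with no stochastic term:
\[
\frac{\df w}{\df t} = \im A w - \im\alpha\big(F(u^1_\gamma) - F(u^2_\gamma)\big) - \gamma w, \qquad w(0)=0,
\]
understood in $\EAdual$. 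The key structural point is that $\Real\langle \im A w, w\rangle = 0$ and $\Real\langle \im\big(F(u^1_\gamma)-F(u^2_\gamma)\big),w\rangle$ does not involve $A$, so the unbounded operator contributes nothing to the $H$-norm balance.

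Next I would justify applying the Itô (here just the chain rule, since $w$ has no martingale part) formula to $\mathcal M(w(t)) = \|w(t)\|_H^2$. For this one needs $w\in L^2(0,T;\EA)$ with $\frac{\df w}{\df t}\in L^2(0,T;\EAdual)$ plus control of the nonlinear term; this follows from the regularity of martingale solutions ($u^i_\gamma$ has paths in $C_w([0,T];\EA)\subset L^\infty(0,T;\EA)$) together with the embedding $\EA\subset{\mathbb L}^\infty$ from Assumption \ref{ass_dim}, which gives $F(u^i_\gamma)\in L^\infty(0,T;H)$. Then
\[
\frac12\frac{\df}{\df t}\|w(t)\|_H^2 = -\gamma\|w(t)\|_H^2 - \alpha\,\Real\big(\im(F(u^1_\gamma(t))-F(u^2_\gamma(t))),\,w(t)\big)_H .
\]
Using \eqref{stima_F_L_infty} pointwise in $x$ and Hölder, together with $\|u^i_\gamma(t)\|_{{\mathbb L}^\infty}\lesssim\|u^i_\gamma(t)\|_\EA$, one bounds the nonlinear term by $C(t)\|w(t)\|_H^2$ where $C(t)$ depends on $\sup_{s\le T}(\|u^1_\gamma(s)\|_\EA^{2\sigma}+\|u^2_\gamma(s)\|_\EA^{2\sigma})$, which is $\mathbb P_\gamma$-a.s. finite. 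Dropping the favourable $-\gamma\|w\|_H^2$ term and invoking Grönwall's inequality on each path gives $\|w(t)\|_H^2 \le \|w(0)\|_H^2 e^{\int_0^t C(s)\,\df s} = 0$ for all $t$, hence $w\equiv 0$ $\mathbb P_\gamma$-a.s., which is \eqref{pathwise-uniq}.

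The main obstacle is the rigorous justification of the chain rule for $\|w(t)\|_H^2$: martingale solutions only live in $C_w([0,T];\EA)$, not in $C([0,T];\EA)$, and a priori $\frac{\df w}{\df t}$ is only in $\EAdual$, so one cannot directly pair it with $w\in\EA$. The clean way around this is to observe that under Assumption \ref{ass_dim} the map $t\mapsto w(t)$ is actually continuous in $H$ and, writing the equation as an integral identity in $\EAdual$, one checks $w\in C([0,T];H)$ with $\frac{\df w}{\df t}\in L^1(0,T;\EAdual)$ and $w\in L^\infty(0,T;\EA)$; the Lions–Magenes / Temam lemma on the differentiability of $t\mapsto\|w(t)\|_H^2$ then applies because the duality $\langle \frac{\df w}{\df t},w\rangle$ is integrable (the $A w$ term pairs with $w\in\EA$ to give something in $L^1$, and the nonlinear term is in $L^\infty(0,T;H)$). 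An alternative, entirely avoiding this subtlety, is to first test the difference equation against $w$ at the Galerkin level and pass to the limit, but since uniqueness is a statement about given solutions it is cleaner to argue directly with the regularization lemma. Once this justification is in place the rest is the routine Grönwall argument sketched above.
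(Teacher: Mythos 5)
Your argument for the case $\beta>\tfrac d2$ is essentially the paper's proof: set $w=u^1_\gamma-u^2_\gamma$, note the additive noise cancels, use $\Real\langle \im Aw,w\rangle=0$, bound the nonlinear term via \eqref{stima_F_L_infty} and the embedding $\EA\subset{\mathbb L}^\infty$, and close with Gr\"onwall. You are in fact more careful than the paper about justifying the chain rule for $t\mapsto\|w(t)\|_H^2$ (the paper simply differentiates and then upgrades from a dense set of times to all times using continuity in $H$), and that extra care is welcome rather than a deviation.

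There is, however, a genuine gap at the endpoint case $d=2$, $\beta=1$, which Assumption \ref{ass_dim} explicitly allows ($\beta\ge\tfrac d2$ when $d=2$). Your parenthetical claim that $\EA\subset{\mathbb L}^p$ for every finite $p$ already ``makes the nonlinearity Lipschitz on bounded sets of $\EA$ in the $H$-norm'' is not correct: the estimate $\|(|u^1|^{2\sigma}+|u^2|^{2\sigma})\,|w|\,\|_{{\mathbb L}^2}\le(\|u^1\|_{{\mathbb L}^\infty}^{2\sigma}+\|u^2\|_{{\mathbb L}^\infty}^{2\sigma})\|w\|_H$ genuinely requires the ${\mathbb L}^\infty$ bound; any H\"older splitting with finite exponents forces $w$ into some ${\mathbb L}^{2q}$ with $q>1$, which is not controlled by $\|w\|_H$ alone, so the Gr\"onwall loop does not close. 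This borderline case needs a different argument (interpolation plus an Osgood/Brezis--Gallou\"et-type lemma, or Strichartz estimates); the paper disposes of it by invoking \cite[Theorem 6.5]{noi2d} and observing that the estimates there are independent of $\alpha$. As written, your proof establishes the proposition only for $\beta>\tfrac d2$.
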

%PROOF
\begin{proof}
\underline{$d\ge1$ and $\beta>\frac d2$.} We know the existence of martingale solutions, which for any $T>0$ enjoy 
\begin{equation}
\label{reg_uniq}
u_\gamma \in L^{2p}(\Omega_\gamma;L^\infty(0,T;V)).
\end{equation} 
Now we suppose they are defined on the same probability space and have the same initial data.
Define $v_\gamma:=u^1_\gamma-u^2_\gamma$. This difference satisfies
\begin{equation*}
\begin{cases}
\frac{{\rm d}v_\gamma(t)}{{\rm d}t}= \im Av_\gamma(t)- \im\alpha \left(F(u^1_\gamma(t))-F(u^2_\gamma(t))\right)-\gamma v_\gamma(t)
\\
v_\gamma(0)=0.
\end{cases}
\end{equation*}
We have 
\begin{align}
\frac{{\rm d}}{ {\rm d}t} \|v_\gamma(t)\|_H^2
&=2\text{Re}\langle v_\gamma(t), -\im \alpha F(u^1_\gamma(t))+ \im \alpha F(u^2_\gamma(t))\rangle -2 \gamma \|v_\gamma(t)\|^2_H.
\end{align}
From \eqref{stima_F_L_infty} we get
\begin{align}
\label{u4}
\text{Re}\langle v_\gamma(t), -\im \alpha F(u^1_\gamma(t))+ \im \alpha F(u^2_\gamma(t))\rangle
&\lesssim \|v_\gamma(t)\|^2_H \left[\|u^1_\gamma(t)\|_{L^{\infty}}^{2\sigma} +\|u^2_\gamma(t)\|^{2\sigma}_{L^{\infty}}\right],
\end{align}
so that 
\begin{align*}
\frac{{\rm d}}{{\rm d}t} \|v_\gamma(t)\|_H^2
&\lesssim
-2 \|v_\gamma(t)\|^2_H\left( \gamma- \left[\|u^1_\gamma(t)\|_{L^{\infty}}^{2\sigma} +\|u^2_\gamma(t)\|^{2\sigma}_{L^{\infty}}\right] \right)
\\
&\lesssim
-2 \|v_\gamma(t)\|^2_H\left( \gamma- C\left[\|u^1_\gamma(t)\|_{\EA}^{2\sigma} +\|u^2_\gamma(t)\|^{2\sigma}_{\EA}\right] \right),
\end{align*}
where we used the  Sobolev embedding $V \subset L^\infty$, which holds for $\beta>\frac d2$.

By the Gronwall lemma we get, for any $t \in \left[0,T\right]$,
\begin{equation*}
e^{-2\gamma t + C\int_0^t\left[\|u^1_\gamma(s)\|_{\EA}^{2\sigma} +\|u^2_\gamma(s)\|^{2\sigma}_{\EA}\right]\, {\rm d}s}\|v_\gamma(t)\|^2_H\le 0, \qquad \mathbb{P}_\gamma-a.s,
\end{equation*}
where we notice that the term $\int_0^t\left[\|u_1(s)\|_{\EA}^{2\sigma}+\|u_2(s)\|^{2\sigma}_{\EA}\right] {\rm d}s$ is well defined thanks to \eqref{reg_uniq}.
Therefore there exists a sequence of times $(t_n)_n$, dense in $[0,T]$, such that
\[
\mathbb{P}_\gamma\left(u^1_\gamma(t_n)=u^2_\gamma(t_n) \ \text{for all} \ n \in \mathbb N \right)=1.
\]
By  the continuity of the trajectories in $H$,  we conclude that
\begin{equation*}
\mathbb{P}_\gamma\left(u^1_\gamma(t)=u^2_\gamma(t) \ \text{for all} \ t \in[0, T] \right)=1.
\end{equation*}
Taking the sequence $T=N\in\mathbb N$ we get \eqref{pathwise-uniq}.
\\
\underline{$d=2$ and $\beta=1$.} For the defocusing case we refer to \cite[Theorem  6.5]{noi2d} in the simplified setting of an additive noise. The same proof works in the focusing case, since the estimates in \cite[Theorem  6.5]{noi2d} and in all the results in \cite[Section 6]{noi2d} are independent of $\alpha$.
\end{proof}

The pathwise uniqueness and the existence of martingale solutions
imply the existence of  strong solutions, see e.g. \cite[Theorem 2]{Ondrejat_2004_Uniqueness} and \cite[Theorem 5.3 and Corollary 5.4]{Kunze_2013_Yamada}.
The following result is thus a direct consequence of Propositions \ref{Prop-no-gamma} and \ref{uniq_prop}.

\begin{proposition}
\label{sta_uni_str}
Let Assumptions \ref{ass-sigma}, \ref{ass_G}  and  \ref{ass_dim} be in force. 
 Then for any $\gamma>0$   there exists a unique strong (in the probabilistic sense) stationary solution 
$U_\gamma$ of equation \eqref{eqS-sqrt-gamma}  fulfilling the estimates 
\eqref{uniform-mass}-\eqref{sti_tight-gamma}.
\end{proposition}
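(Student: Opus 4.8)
The plan is to obtain this as a consequence of the existence of a stationary martingale solution (Proposition~\ref{Prop-no-gamma}), the pathwise uniqueness just established (Proposition~\ref{uniq_prop}), and the Yamada--Watanabe principle for SPDEs, in the form recalled in \cite[Theorem~2]{Ondrejat_2004_Uniqueness} and \cite[Theorem~5.3 and Corollary~5.4]{Kunze_2013_Yamada}. Fix $\gamma>0$. Proposition~\ref{Prop-no-gamma} provides a stationary martingale solution $\bigl(\tilde\Omega_\gamma,\tilde\F_\gamma,\tilde\Prob_\gamma,\tilde W_\gamma,\tilde\Filtration_\gamma,U_\gamma\bigr)$ of \eqref{eqS-sqrt-gamma} satisfying \eqref{uniform-mass}--\eqref{sti_tight-gamma}. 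Since its paths lie in $C_w([0,\infty);\EA)$, the marginal law $\nu_\gamma:=\mathcal L(U_\gamma(t))$ (independent of $t$ by stationarity) is a Borel probability measure on $\EA$, and by \eqref{sti_tight-gamma} it has finite moments of all orders, $\int_\EA\|v\|_\EA^{2p}\,{\rm d}\nu_\gamma(v)<\infty$ for every $p\ge1$.

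The first step is to record pathwise uniqueness for \eqref{eqS-sqrt-gamma} with initial law $\nu_\gamma$: this follows from Proposition~\ref{uniq_prop}, which provides it for deterministic $u^0\in\EA$, by the standard conditioning (disintegration) argument on the initial value. The second step is to invoke the Yamada--Watanabe principle: weak existence (Proposition~\ref{Prop-no-gamma}) together with pathwise uniqueness yields uniqueness in law of martingale solutions of \eqref{eqS-sqrt-gamma} started at $\nu_\gamma$ and, moreover, the existence of a strong solution. Hence, on any prescribed stochastic basis carrying a $Y$-cylindrical Wiener process $W$ and an $\F_0$-measurable initial datum of law $\nu_\gamma$, there is a solution $U_\gamma$ in the sense of Definition~\ref{def-martingale solution}, adapted to the completed filtration generated by $W$ and the initial datum, and unique up to indistinguishability by \eqref{pathwise-uniq}. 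The bounds \eqref{uniform-mass}--\eqref{sti_tight-gamma} depend only on the law of the process, hence are inherited by this strong solution.

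The main point requiring care is the stationarity of this strong solution. For $h>0$, the shifted process $U_\gamma(\cdot+h)$ solves \eqref{eqS-sqrt-gamma} on $[0,\infty)$ relative to the shifted Wiener process $W(\cdot+h)-W(h)$, with initial datum $U_\gamma(h)$, whose law is again $\nu_\gamma$ because the martingale solution of Proposition~\ref{Prop-no-gamma} is stationary in $\EA$. By the uniqueness in law just obtained, $U_\gamma(\cdot+h)$ and $U_\gamma$ have the same finite-dimensional distributions, which is precisely stationarity in $\EA$; in particular the identities \eqref{uniform-mass}--\eqref{sti_tight-gamma} hold for all $t\ge0$. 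Uniqueness of the strong stationary solution (for the given $\gamma$) is then immediate from \eqref{pathwise-uniq}. Since the Yamada--Watanabe machinery and the conditioning argument are cleanest on a bounded time interval, one runs the construction on each $[0,N]$, $N\in\mathbb N$, and glues the resulting solutions using \eqref{pathwise-uniq} to conclude the statement on $[0,+\infty)$.
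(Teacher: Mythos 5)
Your proposal is correct and follows essentially the same route as the paper, which simply combines the stationary martingale existence of Proposition \ref{Prop-no-gamma} with the pathwise uniqueness of Proposition \ref{uniq_prop} via the Yamada--Watanabe principle of \cite[Theorem 2]{Ondrejat_2004_Uniqueness} and \cite[Theorem 5.3 and Corollary 5.4]{Kunze_2013_Yamada}. The additional care you take with the conditioning argument for random initial data and with transferring stationarity to the strong solution is left implicit in the paper but is exactly the right way to fill in those details.
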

Therefore one can define the Markov semigroup associated to the stochastic equation \eqref{eqS-sqrt-gamma} and any stationary solution provides an invariant measure (see, e.g., \cite{dpz}). Thus in this setting our results are closer to those of Kuksin and Shirikyan \cite{KuksinS}.

Reasoning as in Proposition \ref{tight-U_gamma} one gets that the family $\{U_\gamma\}_{0<\gamma \le 1}$ of stationary strong solutions of equation \eqref{eqS-sqrt-gamma} is tight in $\mathcal{Z}$. Considering the vanishing limits we obtain the following result.
%TEO
\begin{theorem}
\label{hatHdimle2}
Let Assumptions \ref{ass-sigma}, \ref{ass_G}  and  \ref{ass_dim} be in force. 
Consider the family $\{U_\gamma\}_{0<\gamma \le 1}$ of strong stationary solutions
of equation \eqref{eqS-sqrt-gamma}. Then 
any vanishing sequence has a subsequence $\gamma_j\to 0$ as $j\to\infty$ such that
there exist 
a new filtered probability space 
$\bigl(\hat{\Omega},\hat{\F},\hat{\Prob},\hat{\Filtration}\bigr)$, 
a sequence of $\mathcal Z$-valued stochastic processes ${\hat U}_{j}$ and a 
$\mathcal Z$-valued stochastic process ${\hat U}$ enjoying properties $P_1$)-$P_3$) and $P_5$)-$P_6$) of Theorem \ref{teoUstaz} and
\begin{itemize}
\item[$P^\prime_4)$]\label{i4_uniq}
$\hat U\in C([0,\infty);V)$, $\hat{\Prob}$-a.s., 
such that \eqref{nonbanale} - \eqref{finale-energy} hold.
\item[$P_7)$]\label{i7_uniq}The energy is a constant in time  random variable.
\end{itemize}
\end{theorem}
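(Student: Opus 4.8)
The plan is to mirror the proof of Theorem \ref{teoUstaz} almost verbatim, since under Assumption \ref{ass_dim} we additionally have pathwise uniqueness (Proposition \ref{uniq_prop}) and, by Proposition \ref{sta_uni_str}, a unique strong stationary solution $U_\gamma$ of \eqref{eqS-sqrt-gamma} fulfilling \eqref{uniform-mass}--\eqref{sti_tight-gamma}. First I would invoke the tightness of $\{U_\gamma\}_{0<\gamma\le1}$ in $\mathcal Z$, obtained exactly as in Proposition \ref{tight-U_gamma} using the uniform estimate \eqref{sti_tight-gamma}. Then, given any vanishing sequence, Prokhorov's theorem provides a weakly convergent subsequence $\gamma_j\to0$ and Jakubowski's generalization of the Skorohod theorem provides a new probability basis $(\hat\Omega,\hat{\F},\hat{\Prob},\hat{\Filtration})$, processes $\hat U_j$ with the law of $U_{\gamma_j}$, and a limit $\hat U$ with $\hat U_j\to\hat U$ $\hat\Prob$-a.s.\ in $\mathcal Z$; moreover $\hat U$ is stationary in $V$. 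This already yields $P_1)$, $P_2)$, $P_5)$. The verification of $P_3)$ (the limit solves \eqref{eqS}), $P_6)$ (mass constant in time), and the estimates \eqref{nonbanale}--\eqref{finale-energy} is word-for-word that of Theorem \ref{teoUstaz}, Steps 2--5, and does not use the extra regularity; so I would simply cite that argument rather than repeat it.

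The only genuinely new points are $P'_4)$ and $P_7)$. For $P'_4)$, the improvement from $C_w([0,\infty);V)$ to $C([0,\infty);V)$ follows because under Assumption \ref{ass_dim} the limit equation \eqref{eqS} has, by Proposition \ref{uniq_prop} with $\gamma=0$ and $\Phi=0$, a unique solution with $V$-valued initial datum, and that solution conserves the $V$-regularity in a strong (norm-continuous) sense: one argues that $t\mapsto\|A^{1/2}\hat U(t)\|_H^2$ is continuous — either by showing $t\mapsto\mathcal E_\alpha(\hat U(t))$ is constant (see $P_7)$ below) together with weak continuity of $\hat U$ in $V$ and continuity of the mass, which upgrades weak to strong continuity in the Hilbert space $V$; or directly from the deterministic well-posedness theory for \eqref{eqS} in the energy space under $\beta>d/2$ (resp.\ $\beta\ge d/2$ for $d=2$). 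I would present the first route, since the energy-constancy is needed anyway for $P_7)$.

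For $P_7)$ — that $\mathcal E_\alpha(\hat U(t))$ is a time-constant random variable — the idea is that, under Assumption \ref{ass_dim}, solutions of \eqref{eqS} are regular enough for the energy-balance computation recalled in \S\ref{section-m-e} to be rigorous. Concretely, since $V\subset\mathbb L^\infty$ (for $\beta>d/2$) the nonlinear term $F(\hat U)=|\hat U|^{2\sigma}\hat U$ lies in $V$ (or at least in $H$) along trajectories, so that $\hat U\in L^2_{\mathrm{loc}}(0,\infty;V)$ together with $\tfrac{d\hat U}{dt}=\im A\hat U-\im\alpha F(\hat U)\in L^2_{\mathrm{loc}}(0,\infty;H)$ makes the pairing $\langle \tfrac{d\hat U}{dt},A\hat U\rangle$ and $\langle\tfrac{d\hat U}{dt},F(\hat U)\rangle$ legitimate; then
\[
\frac{d}{dt}\mathcal E_\alpha(\hat U(t))
=\Real\Big\langle \tfrac{d\hat U}{dt}(t),\,A\hat U(t)-\alpha F(\hat U(t))+1_{\alpha=1}c(\hat U(t))\Big\rangle
\]
where the $\hat U$-conservation of the mass (established in $P_6)$) kills the extra $1_{\alpha=1}$ terms coming from \eqref{modifEnergy}, and the remaining bracket equals $\Real\langle \im(A\hat U-\alpha F(\hat U)),A\hat U-\alpha F(\hat U)\rangle=0$ since it is the real part of a purely imaginary quantity. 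Hence $\mathcal E_\alpha(\hat U(t))=\mathcal E_\alpha(\hat U(0))$ $\hat\Prob$-a.s., for all $t$.

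The main obstacle is exactly this last regularity bookkeeping: one must be careful that for small $\sigma$ the map $u\mapsto F(u)$ is only $C^{1}$-ish and $F(\hat U)$ sits in $H$ (not necessarily in $V$), so the chain rule for $\mathcal E_\alpha$ must be justified by approximation — either by a Galerkin/mollification argument, or by invoking the deterministic energy-conservation theory for \eqref{eqS} in the energy space (Cazenave) valid under $\beta\ge d/2$. I would handle the $d=2$, $\beta=1$ endpoint case separately, exactly as in Proposition \ref{uniq_prop}, by referring to \cite[Section 6]{noi2d}, where the energy identity is established in that critical setting (the estimates there being independent of $\alpha$). Everything else is inherited from Theorem \ref{teoUstaz}.
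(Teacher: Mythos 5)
Your overall architecture matches the paper's: everything except $P'_4)$ and $P_7)$ is inherited verbatim from Theorem \ref{teoUstaz}, and your route for $P'_4)$ (constancy of the energy plus continuity of the mass and of the $\mathbb L^{2+2\sigma}$-norm upgrades weak $V$-continuity to strong $V$-continuity) is essentially what the paper does. The problem is $P_7)$, where your primary argument has a genuine gap. You assert that $\tfrac{d\hat U}{dt}=\im A\hat U-\im\alpha F(\hat U)\in L^2_{\mathrm{loc}}(0,\infty;H)$, but this is false at the available regularity: the solution lives only in $V=\mathcal D(A^{1/2})$, so $A\hat U$ lies in $\EAdual$ and not in $H$ (that would require $\hat U\in\mathcal D(A)$, i.e.\ two "derivatives", which Assumption \ref{ass_dim} does not provide). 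Consequently the pairings $\langle\tfrac{d\hat U}{dt},A\hat U\rangle$ and the identity $\Real\langle\im(A\hat U-\alpha F(\hat U)),A\hat U-\alpha F(\hat U)\rangle=0$ are not legitimate — this is precisely the classical obstruction to proving energy conservation for NLS at energy-space regularity, and it is why the mass argument of Step 5 of Theorem \ref{teoUstaz} does not transfer to the energy.

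You do hedge by mentioning a Galerkin/mollification fallback, but you omit the ingredient that actually makes the Galerkin route close: passing to the limit in the exact Galerkin energy identity $\mathcal E(\hat U_n(t))=\mathcal E(\hat U_n^0)$ only yields the \emph{one-sided} inequality $\mathcal E(\hat U(t))\le\mathcal E(\hat U(0))$, because the $V$-seminorm is merely weakly lower semicontinuous (the $\mathbb L^{2+2\sigma}$-term converges strongly by compactness, but the gradient term does not). The paper (following Burq--G\'erard--Tzvetkov) obtains the reverse inequality by combining the pathwise uniqueness of Proposition \ref{uniq_prop} (with $\gamma=0$, $\Phi=0$) with the time-reversibility of equation \eqref{eqS}: running the same argument from time $t$ backwards gives $\mathcal E(\hat U(0))\le\mathcal E(\hat U(t))$, whence equality. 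Without this uniqueness-plus-time-reversal step your proof of $P_7)$ — and hence also of the norm convergence used in $P'_4)$ — is incomplete. (A minor further point: you state the conclusion for the modified energy $\mathcal E_\alpha$ rather than $\mathcal E$; given $P_6)$ these are equivalent, so that is harmless.)
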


\begin{proof}
Referring back to Theorem \ref{teoUstaz} we only need to prove $P_7)$ and the additional regularity $C([0, \infty);V)$ in $P^\prime_4)$. 

To deal with the energy in $P_7)$ we follow \cite[Section 3.3 (p. 591)]{Burq+G+T_2004}. We work pathwise.
We consider a Galerkin approximation $\{\hat U_n\}_{n \in \mathbb{N}}$ of equation \eqref{eqS}
 with initial data $\hat U^0_n$ converging strongly in $V$ (hence in ${\mathbb L}^{2+2\sigma}$)   to $\hat U(0)$.
  For any $n \in \mathbb{N}$ the energy conservation law
\begin{equation}
\label{cons_energy_Gal_bis}
\mathcal{E}(\hat U_n(t)) = \mathcal{E}(\hat U^0_n), \quad t >0
\end{equation}
is satisfied.
We obtain  uniform estimates in $\EA$, as usual. Therefore
$\hat U_n(t)$
weakly converges in $\EA$, hence strongly in ${\mathbb L}^{2+2\sigma}$.
Passing to the limit as $n \rightarrow \infty$ in \eqref{cons_energy_Gal_bis} gives for any $t>0$,
\begin{equation*}
%\label{ineq_energy}
\frac 12 \|A^{\frac 12}\hat U(t)\|^2_H - \frac{\alpha}{2\sigma +2}\|\hat U(t)\|_{{\mathbb L}^{2\sigma+2}}^{2\sigma +2} \le \frac 12 \|A^{\frac 12} \hat U(0)\|^2_{H}- \frac{\alpha}{2\sigma +2}\|\hat U(0)\|_{{\mathbb L}^{2\sigma +2}}^{2\sigma +2} .
\end{equation*}
By uniqueness of the solution, by reversing time, we get the reverse inequality, hence the  equality holds
\begin{equation}
\label{eq_energy}
\mathcal{E}(\hat U(t)) = \mathcal{E}(\hat U(0)), \quad t >0,
\end{equation}
which proves that the energy is a random variable constant in time.
\\
As a consequence, we infer $\|\hat U_n(t)\|_V \rightarrow \|\hat U(t)\|_V$ for all $t \ge 0$. Together with $\hat U_n \rightarrow \hat U$ in $C_w([0, \infty);V)$ this yields $\hat U_n \rightarrow \hat U$ in $C([0, \infty);V)$ which proves the additional regularity $\hat U \in C([0, \infty);V)$ in $P^\prime_4)$.
\end{proof}

%%%%%%%%%%%%%%%%%%%%%%%%%%%
\section{Final remarks}
\label{final}

The stationary solution of the unforced NLS equation \eqref{eqS}
given by Theorem \ref{hatHdimle2} enjoys the conservation of mass and energy. On the other hand, if we drop Assumption \ref{ass_dim},
the stationary solution of \eqref{eqS}
given by Theorem \ref{teoUstaz} enjoys the conservation of mass but we are not able to deal with the conservation of energy. 
Indeed the solution is not regular enough to repeat for the energy what we have done for the mass. 
However in \cite{KuksinS} Kuksin and Shirikyan consider a more regular (parabolic) stochastic approximation, i.e. they deal with the complex Ginzburg-Landau equation
\begin{equation}
  {\rm d} u +[ \im \Delta  u-\im  |u|^{2} u -\gamma \Delta u]\,{\rm d}t
= \sqrt \gamma\Phi  \,{\rm d}W
\end{equation}
as a perturbation of the cubic defocusing Schr\"odinger equation
\begin{equation}
  \partial_t u  +\im \Delta  u-\im  |u|^{2} u =0.
  \end{equation}
  Therefore they get more regularity  %in space and in time 
and they can prove that in the limit both the mass and the energy are conserved. Indeed the limit invariant measure has support in $D(A)$.

In addition let us point out that our solution is less regular in space and in time, as well. Indeed we get that
$\hat U\in C([0,\infty);H)\cap C_w([0,\infty);V)$ and not the $C([0,\infty);\EA)$.  
Only under Assumption \ref{ass_dim} we can infer such kind of regularity exploiting the uniqueness of the solution.

Anyway, the conservation of the mass and the expression  \eqref{nonbanale} are the important results which show that the limit is not the 
trivial zero solution, and noises with different $\|\Phi\|_{L_{HS}(Y,H)}$ provide 
different stationary solutions to the unforced deterministic NLS  equation \eqref{eqS}.
Moreover the noise given in \eqref{espressione-del-noise} is a bit more general than that in \cite{Sh11}, where it was assumed that $\varphi_j=\varphi_{-j}$ for all $j$.

To conclude the comparison with the results by Kuksin and collaborators, we point out that  the stronger results by  Kuksin and Shirikyan in \cite{KuksinS} hold under stronger assumptions; they 
 work in a bounded domain of $\mathbb R^d$  with the Dirichlet boundary condition 
or on the box $[0,L]^d$ with  the odd-periodic boundary conditions, considering the defocusing case for $d\le 4$.
Here we deal with the focusing equation as well  and there are no restrictions  on the dimension $d$.
Moreover, we  consider more general polynomial nonlinearities and power $\beta$ of the negative Laplacian.  Also other boundary conditions or spatial domains can be considered, as explained in Appendix \ref{domini}.

Finally,  we comment on the recent works \cite{S} and \cite{SY} by Sy and collaborators, where the authors prove the existence of an invariant probability measure for the deterministic defocusing energy-{\it supercritical} NLS equation. 
This measure $\mu$ is supported on smooth Sobolev spaces; they show that the NLS equation is  globally well-posed for $\mu$-a.e. initial data in the support of this measure. Even if their technique is related to  that of Kuksin, the setting and the results are very different. The aim there is to deal with the  supercritical NLS equation; however, 
our results and those by Kuksin and collaborators hold for the  energy-subcritical case corresponding to Assumption \ref{F}.

\appendix

%%%%%%%%%%%%%%%%
\section{Estimates for the Galerkin approximation}
\label{app-Gal}

We collect the basic results on the Faedo-Galerkin approximation \eqref{eq-Gal}. 

First, we point out that
 in this section we  often refer to our previous paper  \cite{noi2d}, where the noise is more general (a conservative noise in the form of a multiplicative linear Stratonovich noise plus a nonlinear It\^o noise). 
There we were mainly interested in the  defocusing case for $d=2$ and $\beta=1$, because we wanted to get existence and uniqueness of solutions. 
In that more general setting  it was necessary to 
introduce another sequence of smoothing operators $S_n$ such that $\sup_{n\in \mathbb{N}}\|S_n\|_{\mathcal{L}({\mathbb L}^{2+2\sigma};{\mathbb L}^{2+2\sigma})}< \infty$, since the operators $P_n$, $n \in\mathbb{N}$, are not uniformly bounded from ${\mathbb L}^{2+2\sigma}$ to ${\mathbb L}^{2+2\sigma}$. 
However this smoothing is not needed when dealing with an additive noise. So our finite-dimensional 
equation \eqref{eq-Gal} is easier than that in \cite{noi2d}.

Anyway the results on the existence of martingale solutions 
for the Galerkin system proved in \cite{noi2d} hold also in the setting of the present paper. Therefore even if there are some differences in the setting, 
when  possible  we  refer to the computations done there;  otherwise we provide details of the 
proofs or at least we point out the main differences with respect to \cite{noi2d}.

We start by recalling the existence result  of a solution $u_n$ of the Faedo-Galerkin approximation \eqref{eq-Gal}. 
The proof is classical, see \cite[Proposition 5.6]{noi2d}.

%PROP
\begin{proposition}\label{prop-un-H}
Suppose Assumptions $\ref{ass-sigma}$ and $\ref{ass_G}$ hold and arbitrarily fix  $\gamma\ge 0$.
If $u^0_n$ is an $\mathcal{F}_0$-measurable random variable  such that
\[
\mathbb E\|u_n^0\|_H^2<\infty,
\]
 then   there exists a unique (strong in the probabilistic sense) global solution $u_n$ of equation $\eqref{eq-Gal}$,
with continuous paths in $H_n$. 
%Moreover\begin{equation}\label{stima-base-u_n-H}\mathbb E \left[\sup_{0\le t\le T }\|u_n(t)\|_H ^{2}\right]\lesssim_T\mathbb{E}\left[\|u_n^{0}\|_H ^{2}\right]+ \|P_n \Phi\|_{L_{HS}(Y,H)}^2.\end{equation}
\end{proposition}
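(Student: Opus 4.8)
The statement to prove is Proposition~\ref{prop-un-H}: global existence and uniqueness of a probabilistically strong, $H_n$-continuous solution $u_n$ of the Galerkin equation~\eqref{eq-Gal}, given an $\mathcal F_0$-measurable initial datum with $\E\|u_n^0\|_H^2<\infty$. The plan is to recognize~\eqref{eq-Gal} as a finite-dimensional SDE in the Euclidean space $H_n\cong\mathbb C^n\cong\mathbb R^{2n}$ with locally Lipschitz coefficients, and then to upgrade the local solution to a global one by a no-explosion argument based on an $H$-norm a~priori estimate that already appears in the machinery of Section~\ref{sect-existence} (via \eqref{p_est_mass}/\eqref{ito-base}).

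First I would set up the finite-dimensional picture. Writing $u_n(t)=\sum_{j=1}^n c_j(t)e_j$, equation~\eqref{eq-Gal} becomes an It\^o SDE $\df c = b_n(c)\,\df t + \Sigma_n\,\df W$ for the coefficient vector $c\in\mathbb R^{2n}$, where the drift $b_n$ collects the linear terms $\im A u_n$ and $-\gamma u_n$ (smooth, globally Lipschitz on $H_n$) together with the nonlinear term $-\im\alpha P_n F(u_n)$, and the diffusion $\Sigma_n = P_n\Phi$ is constant. On the finite-dimensional space $H_n$ all norms are equivalent, so by~\eqref{stima_F_L_infty} the map $v\mapsto P_nF(v)$ is locally Lipschitz and has polynomial growth on $H_n$; hence $b_n$ is locally Lipschitz with at most polynomial growth. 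The classical existence-and-uniqueness theorem for SDEs with locally Lipschitz coefficients then gives a unique maximal (strong) solution $u_n$ on a random interval $[0,\tau)$ with $\tau=\lim_k \tau_k$, where $\tau_k=\inf\{t:\|u_n(t)\|_H\ge k\}$, and with continuous paths in $H_n$ up to $\tau$.

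Next I would rule out explosion, i.e. show $\tau=+\infty$ a.s. Apply the It\^o formula to $\mathcal M(u_n(t))=\|u_n(t)\|_H^2$ along the Galerkin dynamics; the terms $\Real\langle u_n, \im A u_n\rangle$ and $\Real\langle u_n,\im\alpha P_nF(u_n)\rangle$ vanish because they are real parts of purely imaginary quantities (this is exactly the conservation-of-mass computation recalled in \S\ref{section-m-e}, which survives projection since $P_nA=AP_n$ and $(P_nF(u_n),u_n)_H=(F(u_n),u_n)_H$ by self-adjointness of $P_n$), leaving
\[
\df\,\|u_n(t)\|_H^2 = \big(-2\gamma\|u_n(t)\|_H^2 + \|P_n\Phi\|^2_{L_{HS}(Y,H)}\big)\,\df t + 2\,\Real\big(u_n(t),P_n\Phi\,\df W(t)\big)_H .
\]
Taking expectations of the stopped process at $\tau_k\wedge t$, the stochastic integral drops and a Gronwall argument (or direct integration, using $\gamma\ge0$) yields $\E\|u_n(t\wedge\tau_k)\|_H^2 \le \|u_n^0\|_H^2 + t\,\|P_n\Phi\|_{L_{HS}(Y,H)}^2$ for $d\le 2$-independent constants; this bound is uniform in $k$. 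By Chebyshev, $\Prob(\tau_k\le t)\le k^{-2}(\E\|u_n^0\|_H^2 + t\|\Phi\|^2_{L_{HS}(Y,H)})\to0$ as $k\to\infty$, so $\Prob(\tau\le t)=0$ for every $t$, hence $\tau=+\infty$ a.s. and the solution is global with continuous $H_n$-valued paths.

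The only mildly delicate point — the ``main obstacle'' such as it is — is making the a~priori bound rigorous before global existence is known, i.e. working consistently with the localizing stopping times $\tau_k$ and only passing $k\to\infty$ at the end; this is a routine localization but must be spelled out so the argument is not circular. Everything else is standard finite-dimensional SDE theory, and indeed the statement and proof are already available in the literature for the present type of equation (see \cite[Proposition~5.6]{noi2d}, whose argument applies verbatim here, the additive-noise case being simpler than the multiplicative setting treated there), so in the paper one may simply cite it; I have sketched the self-contained version above for completeness.
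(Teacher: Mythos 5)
Your proposal is correct and matches the paper's approach: the paper disposes of Proposition \ref{prop-un-H} by calling it classical and citing \cite[Proposition 5.6]{noi2d}, which is exactly the standard finite-dimensional argument you sketch (locally Lipschitz drift of polynomial growth on $H_n$, maximal strong solution, and non-explosion via the It\^o formula for the mass with the two cancellations and a localization by stopping times). Your self-contained write-up, including the remark that one may simply cite the reference, is consistent with what the paper does.
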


Now we consider the evolution of the mass and of the energy.
%PROP
\begin{proposition}
Suppose Assumptions $\ref{ass-sigma}$ and $\ref{ass_G}$ hold and arbitrarily fix  $\gamma\ge 0$.
For the  solution $u_n$ to the finite-dimensional problem \eqref{eq-Gal},
the   mass, defined in   \eqref{mass}, evolves according to
\begin{equation}\label{mass-n}
{\rm d}\mathcal{M}(u_n(t))+ 2\gamma \mathcal{M}(u_n(t))\, {\rm d}t
=\|P_n \Phi \|^2_{L_{HS}(Y,H)}\, {\rm d}t+2\dualityReal{u_n(t)}{P_n \Phi \,{\rm d}W(t)}, \quad t>0,
\end{equation}
whereas the energy, defined in \eqref{energy},  evolves according to 
\begin{multline}\label{Ito_energy}
{\rm d}\mathcal{E}(u_n(t))+2 \gamma \mathcal{E}(u_n(t)) \, {\rm d}t=\alpha\gamma \frac \sigma{\sigma+1} \|u_n(t)\|_{{\mathbb L}^{2\sigma+2}}^{2\sigma+2} \, {\rm d}t
\\
-\sum_{j \in \mathbb{Z}_0}\Real \langle -A u_n(t)+\alpha |u_n(t)|^{2\sigma}u_n(t),P_n \Phi e_j^Y \rangle {\rm d}W_j(t)
+\frac 12
 \| A^{\frac 12 } P_n \Phi\|^2_{L_{HS}(Y,H)} \, {\rm d}t
 \\ - \frac \alpha 2 \||u_n(t)|^{\sigma}P_n \Phi\|^2_{L_{HS}(Y,H)}\, {\rm d}t
-\alpha \sigma \sum_{j \in \mathbb{Z}_0}
\langle |u_n(t)|^{2\sigma-2}, [\Real (\overline u_n(t)P_n \Phi e_j^Y)]^2\rangle \, {\rm d}t, \qquad t>0.
\end{multline}
\end{proposition}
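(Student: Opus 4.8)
The plan is to derive both identities by applying the finite-dimensional It\^o formula to the semimartingale $u_n$, which by Proposition \ref{prop-un-H} has continuous paths in $H_n$ and solves \eqref{eq-Gal}. As a preliminary I would record that, on $H_n$ regarded as a real Hilbert space with inner product $\Real(\cdot,\cdot)_H$, both functionals \eqref{mass} and \eqref{energy} are of class $C^2$: $\mathcal M$ is quadratic, and the only non-quadratic term of $\mathcal E$ is $g(u):=\|u\|_{{\mathbb L}^{2+2\sigma}}^{2+2\sigma}=\int_D|u|^{2+2\sigma}\,{\rm d}x$, whose first two real G\^ateaux derivatives are
\[
Dg(u)[h]=(2+2\sigma)\,\Real(F(u),h)_H,\qquad D^2g(u)[h,k]=(2+2\sigma)\int_D\Big(2\sigma\,|u|^{2\sigma-2}\,\Real(\overline u\,h)\,\Real(\overline u\,k)+|u|^{2\sigma}\,\Real(\overline h\,k)\Big)\,{\rm d}x ,
\]
both continuous in $u$ (the integrand of $D^2g$ is pointwise dominated by $C|u|^{2\sigma}|h|\,|k|$, hence continuous and vanishing where $u=0$), so $g\in C^2(H_n)$ for every $\sigma>0$ and It\^o's formula applies with no mollification step.

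For the mass I would apply It\^o to $t\mapsto\mathcal M(u_n(t))=\|u_n(t)\|_H^2$ along \eqref{eq-Gal}. The drift contributions of $\im Au_n$ and of $-\im\alpha P_nF(u_n)$ drop out since $2\Real(u_n,\im Au_n)_H$ and $2\Real(u_n,-\im\alpha P_nF(u_n))_H$ are real parts of purely imaginary numbers: $(u_n,Au_n)_H=\|A^{1/2}u_n\|_H^2$, and, using $P_nu_n=u_n$ together with the self-adjointness of $P_n$, $(u_n,P_nF(u_n))_H=(u_n,F(u_n))_H=\|u_n\|_{{\mathbb L}^{2\sigma+2}}^{2\sigma+2}$, are both real. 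What remains is the damping contribution $-2\gamma\|u_n\|_H^2\,{\rm d}t$, the It\^o correction $\sum_{j}\|P_n\Phi e_j^Y\|_H^2\,{\rm d}t=\|P_n\Phi\|_{L_{HS}(Y,H)}^2\,{\rm d}t$, and the stochastic integral $2\,\Real(u_n,P_n\Phi\,{\rm d}W)_H$; rearranging gives \eqref{mass-n}.

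For the energy I would apply It\^o to $t\mapsto\mathcal E(u_n(t))$ along \eqref{eq-Gal}. The drift produced by the Hamiltonian vector field $-\im Au_n+\im\alpha P_nF(u_n)$ equals $\frac{{\rm d}}{{\rm d}t}\mathcal E$ along the deterministic Galerkin flow, hence vanishes: this is the finite-dimensional conservation of energy, checked directly from $P_nA=AP_n$, the self-adjointness of $A$ and $P_n$, and the reality of $(Au_n,u_n)_H$ and $(u_n,F(u_n))_H$. The damping term contributes $D\mathcal E(u_n)[-\gamma u_n]\,{\rm d}t=\big(-\gamma\|A^{1/2}u_n\|_H^2+\alpha\gamma\|u_n\|_{{\mathbb L}^{2\sigma+2}}^{2\sigma+2}\big)\,{\rm d}t=\big(-2\gamma\mathcal E(u_n)+\alpha\gamma\tfrac{\sigma}{\sigma+1}\|u_n\|_{{\mathbb L}^{2\sigma+2}}^{2\sigma+2}\big)\,{\rm d}t$, the last equality being a rewriting through the definition \eqref{energy}. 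The martingale term is $\sum_{j\in\mathbb Z_0}D\mathcal E(u_n)[P_n\Phi e_j^Y]\,{\rm d}W_j=\sum_{j\in\mathbb Z_0}\Real\langle Au_n-\alpha F(u_n),P_n\Phi e_j^Y\rangle\,{\rm d}W_j$, i.e. the stochastic integral appearing in \eqref{Ito_energy}. Finally the It\^o correction $\tfrac12\sum_{j\in\mathbb Z_0}D^2\mathcal E(u_n)[P_n\Phi e_j^Y,P_n\Phi e_j^Y]\,{\rm d}t$ is split according to $\mathcal E=\tfrac12\|A^{1/2}\cdot\|_H^2-\tfrac{\alpha}{2+2\sigma}g$: the quadratic part gives $\tfrac12\|A^{1/2}P_n\Phi\|_{L_{HS}(Y,H)}^2\,{\rm d}t$, while substituting $h=k=P_n\Phi e_j^Y$ in the formula for $D^2g$ above and summing over $j$ (using $\sum_j\||u_n|^\sigma P_n\Phi e_j^Y\|_H^2=\||u_n|^\sigma P_n\Phi\|_{L_{HS}(Y,H)}^2$) produces exactly the last two terms $-\tfrac{\alpha}{2}\||u_n|^\sigma P_n\Phi\|_{L_{HS}(Y,H)}^2\,{\rm d}t-\alpha\sigma\sum_{j}\langle|u_n|^{2\sigma-2},[\Real(\overline{u_n}P_n\Phi e_j^Y)]^2\rangle\,{\rm d}t$ of \eqref{Ito_energy}. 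Collecting the surviving terms yields the claimed evolution of the energy.

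The two It\^o expansions themselves are routine; the main obstacle is the bookkeeping of complex inner products that makes the Hamiltonian part of the energy drift cancel, which relies crucially on $A$ and $P_n$ being self-adjoint and on $(Au_n,u_n)_H,(u_n,F(u_n))_H\in\mathbb R$. A secondary technical point is the correct computation and assembly of the bilinear form $D^2g$ into the last two terms of \eqref{Ito_energy}, together with the observation that $g\in C^2(H_n)$ for all $\sigma>0$ (so no regularisation of the nonlinearity is needed).
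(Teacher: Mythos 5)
Your proposal is correct and takes essentially the same route as the paper: a direct application of the finite-dimensional It\^o formula to $\mathcal M$ and $\mathcal E$ along the Galerkin flow, with the Hamiltonian part of the drift cancelling by exactly the reality/self-adjointness observations you list. The only difference is one of exposition: the paper writes out the mass computation and then refers to \cite[Section 5]{noi2d} for the energy identity, whereas you carry out the $D^2g$ bookkeeping (and the $C^2$ regularity of $u\mapsto\|u\|_{{\mathbb L}^{2+2\sigma}}^{2+2\sigma}$ on $H_n$) explicitly and correctly.
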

%PROOF
\begin{proof}
\eqref{mass-n} and \eqref{Ito_energy} are obtained by the It\^o formula  for the mass and  the energy, respectively. 
For the mass one easily gets 
\begin{equation}
\begin{split}
{\rm d}\|u_n(t)\|_H ^{2}&=2 \productReal{u_n(t)}{{\rm d}u_n(t)}+\|P_n \Phi\|^2_{L_{HS}(Y,H)}{\rm d}t
\\&
=2 \productReal{u_n(t)}{\im A  u_n(t)-\im  \alpha P_n F(u_n(t)) -\gamma u_n(t)}{\rm d}t
\\&\; +2 \productReal{u_n(t)}{P_n \Phi  \,{\rm d} W(t)}
+\|P_n \Phi\|^2_{L_{HS}(Y,H)}{\rm d}t.
\end{split}
\end{equation}
One 
checks that 
\begin{equation}
\productReal{u_n}{\im A  u_n}=0, \qquad 
\productReal{u_n}{\im   P_n F(u_n)}=0
\end{equation}
and \eqref{mass-n} immediately follows.
We refer  to \cite[Section 5]{noi2d} for the energy.
\end{proof}
Proposition \ref{prop-Galerkin-esistenza_bis} below provides the estimate of the power moments of the $H$- and $V$-norms. For its proof we need the following technical lemma; see \cite[Lemma 5.6]{BHW-2019} for a proof.
\begin{lemma}
\label{lem_new}
Let $r \in [1, +\infty)$, $\varepsilon>0$, $T>0$ and $f \in L^r(\Omega, L^\infty(0,T))$. Then, 
\[
\|f\|_{L^r(\Omega, L^2(0,t))} \le \varepsilon \|f\|_{L^r(\Omega, L^\infty(0,t))}+ \frac{1}{4 \varepsilon} \int_0^t \|f\|_{L^r(\Omega,L^\infty(0,s))}\, {\rm d}s, \qquad t \in [0,T].
\]
\end{lemma}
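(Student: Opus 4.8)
The plan is to reduce the statement to a pathwise estimate in $\omega$ and then take the $L^r(\Omega)$-norm, invoking only the triangle inequality and Minkowski's integral inequality. For fixed $\omega$ set $g(s):=\|f(\omega,\cdot)\|_{L^\infty(0,s)}=\sup_{0\le\tau\le s}|f(\omega,\tau)|$, which is a nondecreasing function of $s$. The core observation is the elementary pointwise interpolation $|f(\omega,s)|^2\le g(t)\,|f(\omega,s)|$ for $s\in[0,t]$, since $|f(\omega,s)|\le g(t)$ there. Integrating in $s$ and using $|f(\omega,s)|\le g(s)$ gives, pathwise,
\[
\|f(\omega,\cdot)\|_{L^2(0,t)}^2=\int_0^t|f(\omega,s)|^2\,{\rm d}s\le g(t)\int_0^t g(s)\,{\rm d}s,
\]
hence $\|f(\omega,\cdot)\|_{L^2(0,t)}\le g(t)^{1/2}\big(\int_0^t g(s)\,{\rm d}s\big)^{1/2}$.

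Next I would apply the weighted arithmetic--geometric inequality $\sqrt{xy}\le\varepsilon x+\tfrac1{4\varepsilon}y$, valid for all $x,y\ge0$ and $\varepsilon>0$ since it is merely a rewriting of $\big(\sqrt{\varepsilon x}-\sqrt{y/4\varepsilon}\big)^2\ge0$, with $x=g(t)$ and $y=\int_0^t g(s)\,{\rm d}s$. This yields the pathwise bound
\[
\|f(\omega,\cdot)\|_{L^2(0,t)}\le \varepsilon\,g(t)+\frac1{4\varepsilon}\int_0^t g(s)\,{\rm d}s.
\]

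Finally, taking the $L^r(\Omega)$-norm of this pathwise inequality, whose left-hand side is precisely $\|f\|_{L^r(\Omega,L^2(0,t))}$, and using the triangle inequality (Minkowski in $L^r(\Omega)$, where $r\ge1$ is needed) to split the nonnegative right-hand side, the first term becomes $\varepsilon\,\|g(t)\|_{L^r(\Omega)}=\varepsilon\,\|f\|_{L^r(\Omega,L^\infty(0,t))}$ directly from the definition of $g$. For the second term I would invoke Minkowski's integral inequality to pass the norm inside the time integral,
\[
\Big\|\int_0^t g(s)\,{\rm d}s\Big\|_{L^r(\Omega)}\le\int_0^t\|g(s)\|_{L^r(\Omega)}\,{\rm d}s=\int_0^t\|f\|_{L^r(\Omega,L^\infty(0,s))}\,{\rm d}s,
\]
which is exactly the claimed estimate, and the argument holds for every $t\in[0,T]$ since $t$ was arbitrary. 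The hypothesis $f\in L^r(\Omega;L^\infty(0,T))$ guarantees $g(s)\in L^r(\Omega)$ for all $s\le T$, so every quantity above is finite. There is no genuine analytic obstacle: the only ingredient beyond elementary manipulations is Minkowski's integral inequality, and the whole proof hinges on the interpolation $|f|^2\le(\sup|f|)\,|f|$.
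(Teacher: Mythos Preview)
Your proof is correct. The paper does not give its own proof of this lemma; it merely cites \cite[Lemma 5.6]{BHW-2019}. Your argument---the pathwise interpolation $|f(s)|^2\le g(t)g(s)$, the AM--GM splitting $\sqrt{xy}\le\varepsilon x+\tfrac{1}{4\varepsilon}y$, and then Minkowski's integral inequality in $L^r(\Omega)$---is the standard elementary route and is almost certainly what the cited reference does as well.
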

Then, we have
\begin{proposition}\label{prop-Galerkin-esistenza_bis}
Suppose Assumptions $\ref{ass-sigma}$ and $\ref{ass_G}$ hold and arbitrarily fix  $\gamma\ge 0$, 
$p \in [1, \infty)$ and  $T>0$. For the solution $u_n$ to the finite-dimensional problem \eqref{eq-Gal},  we have
\begin{itemize}
\item [(i)] if $u^0_n$ is an $\mathcal{F}_0$-measurable random variable 
such that
\begin{equation}
\E \|u_n^0\|^{2p}_H<\infty,
\end{equation}
then
\begin{equation}
\label{sup_est_H}
\mathbb{E} \left[\sup_{0\le t\le T}\|u_n(t)\|^{2p}_{H} \right] 
\lesssim_{T,p} \mathbb{E}\left[ \|u_n^0\|^{2p}_{H}\right]+ \|P_n \Phi\|_{L_{HS}(Y,H)}^{2p};
\end{equation}
\item [(ii)]
if $u^0_n$ is an $\mathcal{F}_0$-measurable random variable 
such that
\begin{equation}\label{ipo_un0-V}
\E \|u_n^0\|^{2p(1+\sigma)}_V<\infty 
\end{equation}
and in addition when $ \alpha=1$
\begin{equation}\label{ipo_un0-H}
\E \|u_n^0\|^{2p(1+ \frac{2\beta\sigma}{2\beta-\sigma d})}_H<\infty,
\end{equation}
then
\begin{multline}
\label{sup_est_V_def}
\mathbb{E} \left[\sup_{0\le t\le T}\|u_n(t)\|^{2p}_{\EA} \right] 
\lesssim_{\sigma,T,p}
(\mathbbm{1}_{\alpha=-1}+ \mathbbm{1}_{\alpha=1} e^{C\gamma  pT})
\Big(1+ \mathbb{E} \|u_n^0\|^{2p}_{H}+  \mathbb{E} |\mathcal{E}(u_n^0)|^p
\\
+ \|P_n \Phi\|_{L_{HS}(Y,\EA)}^{2p(1+\sigma)} + \mathbbm{1}_{\alpha=1} \left[
 \mathbb{E}\|u_n^0\|_H^{2p(1+\frac{2\beta\sigma}{2\beta-\sigma d})} + \|P_n  \Phi\|_{L_{HS}(Y,H)}^{2p(1+\frac{2\beta\sigma}{2\beta-\sigma d})}\right]
\Big).
\end{multline}
\end{itemize}
\end{proposition}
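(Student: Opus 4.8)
The strategy is to apply the Itô formula to suitable functionals of $u_n$ and then use Gr\"onwall-type arguments together with the Burkholder--Davis--Gundy (BDG) inequality to control the stochastic integrals. For part (i), I would apply the Itô formula to $\mathcal M(u_n(t))^p = \|u_n(t)\|_H^{2p}$, starting from the mass evolution \eqref{mass-n}. The drift contains the dissipative term $-2\gamma p\, \mathcal M(u_n)^p\,{\rm d}t$, which has the right sign and can simply be discarded, plus a term $p\,\mathcal M(u_n)^{p-1}\|P_n\Phi\|_{L_{HS}(Y,H)}^2\,{\rm d}t$ and an It\^o correction of the same order. After taking $\sup_{0\le t\le T}$ and expectations, the martingale part is handled by BDG: its quadratic variation is bounded by $\int_0^T \mathcal M(u_n)^{2p-1}\|P_n\Phi\|_{L_{HS}(Y,H)}^2\,{\rm d}s$, and a Young inequality splits off an $\varepsilon\,\E\sup_{0\le t\le T}\mathcal M(u_n)^p$ term to be absorbed on the left, leaving a Gr\"onwall inequality in $\int_0^t \E\sup_{0\le r\le s}\mathcal M(u_n(r))^p\,{\rm d}s$. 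This yields \eqref{sup_est_H}.

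For part (ii), the natural quantity is the modified energy $\mathcal E_\alpha(u_n)$ from \eqref{due-energie}, which by \eqref{nabla-Htilde} controls $\|u_n\|_V^2$ in the focusing case (and in the defocusing case $\mathcal E_{-1}=\mathcal E$ already dominates $\|A^{1/2}u_n\|_H^2$). I would apply the It\^o formula to $\mathcal E_\alpha(u_n(t))^p$. Starting from \eqref{Ito_energy}, the dissipation gives $-2\gamma p\,\mathcal E(u_n)^p\,{\rm d}t$; combined with the extra $H$-norm terms in the definition of $\mathcal E_\alpha$ (whose It\^o differentials come from \eqref{mass-n}) one gets, in the focusing case, the troublesome term $\alpha\gamma\frac{\sigma}{\sigma+1}\|u_n\|_{{\mathbb L}^{2\sigma+2}}^{2\sigma+2}$; this is exactly why the constant $G$ and the $\|u_n\|_H^{2+4\beta\sigma/(2\beta-\sigma d)}$ correction were built into $\mathcal E_1$, via the Gagliardo--Nirenberg inequality \eqref{constanteG}, so that this term can be re-absorbed into $\mathcal E_1$ up to a lower-order power of $\|u_n\|_H$ already controlled by part (i). This is where the exponential factor $e^{C\gamma pT}$ enters: in the focusing case one cannot drop the dissipative term cleanly and must instead keep it and pay an exponential in the Gr\"onwall step. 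The remaining drift terms involving $\|A^{1/2}P_n\Phi\|_{L_{HS}}^2$, $\||u_n|^\sigma P_n\Phi\|_{L_{HS}}^2$, and the $\langle |u_n|^{2\sigma-2},\cdot\rangle$ term are bounded, using H\"older and the embedding $V\subset{\mathbb L}^{2+2\sigma}$, by $C(1+\|P_n\Phi\|_{L_{HS}(Y,V)}^{2(1+\sigma)} + \mathcal E_\alpha(u_n)^{\text{something}<p})$ plus controllable powers of the mass. The stochastic term (the $\Real\langle -Au_n+\alpha F(u_n),P_n\Phi e_j^Y\rangle$ integral) is treated by BDG exactly as in part (i), with an $\varepsilon$-absorption of $\E\sup \mathcal E_\alpha(u_n)^p$; here one uses $\|Au_n\|_{V^*}\lesssim\|u_n\|_V$ and \eqref{F-grandi} to bound the integrand.

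Finally, assembling all the pieces one obtains a closed Gr\"onwall inequality for $t\mapsto\E\sup_{0\le r\le t}\mathcal E_\alpha(u_n(r))^p$, whose solution gives a bound by the right-hand side of \eqref{sup_est_V_def} with the stated exponential prefactor in the focusing case; the initial-data moments $\E|\mathcal E(u_n^0)|^p$, $\E\|u_n^0\|_H^{2p}$ and (for $\alpha=1$) $\E\|u_n^0\|_H^{2p(1+2\beta\sigma/(2\beta-\sigma d))}$ appear precisely because $\mathcal E_\alpha(u_n^0)$ is estimated via \eqref{modifEnergy}, which explains hypotheses \eqref{ipo_un0-V}--\eqref{ipo_un0-H}. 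One then converts back from $\mathcal E_\alpha$ to $\|u_n\|_V$ using \eqref{nabla-Htilde}. Lemma \ref{lem_new} is the convenient device for the $L^p(\Omega)$-valued $\varepsilon$-absorption at several of these steps. The main obstacle is the bookkeeping in the focusing case: correctly matching the Gagliardo--Nirenberg correction in $\mathcal E_1$ against the term $\alpha\gamma\frac{\sigma}{\sigma+1}\|u_n\|_{{\mathbb L}^{2\sigma+2}}^{2\sigma+2}$ so that everything not yet controlled is strictly lower order in $\mathcal E_1$ and can be closed by Gr\"onwall, while keeping track of the exact powers of $\|P_n\Phi\|_{L_{HS}}$ and of the $\gamma$-dependence hidden in the exponential.
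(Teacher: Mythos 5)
Your plan is correct and rests on the same toolkit as the paper (the mass identity \eqref{mass-n} and energy identity \eqref{Ito_energy}, Burkholder--Davis--Gundy, an $\varepsilon$-absorption of $\E\sup$ via Lemma \ref{lem_new}, and Gr\"onwall), but it diverges from the paper's proof in two places. First, a cosmetic difference: in part (i) and in the defocusing part of (ii) you apply the It\^o formula to the $p$-th power $\mathcal M(u_n)^p$, resp. $\mathcal E(u_n)^p$, whereas the paper never takes powers inside the It\^o formula here --- it keeps the first-power integral identity, discards the damping term, and takes $L^p_\Omega(L^\infty_T)$-norms of both sides, which is why Lemma \ref{lem_new} is stated in that norm; both routes close the same Gr\"onwall inequality. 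Second, and more substantively: in the focusing case the paper does \emph{not} use the modified energy $\mathcal E_1$ in this proposition. It instead writes $\tfrac12\|A^{1/2}u_n\|_H^2=\tfrac{1}{2+2\sigma}\|u_n\|^{2+2\sigma}_{{\mathbb L}^{2+2\sigma}}+\mathcal E(u_n)$, introduces the auxiliary quantity $Z(u)=\|A^{1/2}u\|_H^2+\|u\|^{2+2\sigma}_{{\mathbb L}^{2+2\sigma}}$, and controls the $ {\mathbb L}^{2+2\sigma}$-norm by Gagliardo--Nirenberg in terms of $\varepsilon\|A^{1/2}u_n\|_H^2$ plus the power $\|u_n\|_H^{2+4\beta\sigma/(2\beta-\sigma d)}$ of the mass already bounded in part (i); the $\gamma$-dependent Gr\"onwall coefficient produced by the term $\gamma\tfrac{\sigma}{\sigma+1}\|u_n\|^{2+2\sigma}_{{\mathbb L}^{2+2\sigma}}$ is exactly where $e^{C\gamma pT}$ comes from. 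Your $\mathcal E_1$-route is the one the paper reserves for Proposition \ref{bound_lemma_energy}; it is viable here too, since \eqref{nabla-Htilde} gives coercivity and \eqref{constanteG} lets the Gagliardo--Nirenberg correction absorb the problematic $\gamma$-term. What it buys is a cleaner (in fact strictly dissipative) drift, so a careful execution would even avoid the exponential prefactor; what it costs is a messier It\^o formula, since the term $G\,\mathcal M(u_n)^{1+2\beta\sigma/(2\beta-\sigma d)}$ contributes its own stochastic integrals and It\^o corrections, all of which must be tracked and bounded by the mass moments of part (i). Your accounting of why the hypotheses \eqref{ipo_un0-V}--\eqref{ipo_un0-H} appear (to control $\E|\mathcal E_1(u_n^0)|^p$) matches the paper's.
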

%PROOF
\begin{proof}
The two estimates \eqref{sup_est_H} and \eqref{sup_est_V_def}
are proved in  \cite{noi2d}[\S 5] but in the statements the dependence on $\gamma$ and $\Phi$ is not explicitly given. 
For this reason we provide the proof. 
\\
(i) 
One starts with \eqref{mass-n} and, neglecting the damping term, gets
\begin{align}
\label{sti_new_H}
\|u_n(t)\|^2_H
\le \|u_n^0\|^2_H+\|P_n \Phi \|^2_{L_{HS}(Y,H)}t+2\int_0^t\dualityReal{u_n(s)}{P_n \Phi \,{\rm d}W(s)}.
\end{align}
Denote by $L^p_\Omega( L^q_T)$ the  norm given by 
$[\mathbb E (\|\cdot\|_{L^q(0,T)}^p)]^{\frac 1p}$ and 
let us  estimate the various terms that appear in the right-hand side of
\eqref{sti_new_H} in this norm. Set $I_{0,n}(u)(t):=2\int_0^t\dualityReal{u(s)}{P_n \Phi \,{\rm d}W(s)}$. 
By the Burkholder-Davis-Gundy inequality we get
\[
\|I_{0,n}(u_n)\|_{L^p_\Omega( L^\infty_T)} 
\lesssim
 \left \Vert\left( \sum_{j \in \mathbb{Z}_0} \left| \text{Re}\langle u_n, P_n \Phi e_j^Y\rangle\right|^2\right)^{\frac 12} \right\Vert_{L^p_\Omega(L^2_T)}.
\]
Since
\[
\sum_{j \in \mathbb{Z}_0}\left| \text{Re}\langle u_n, P_n \Phi e_j^Y\rangle\right|^2
\le
\|u_n\|_H^2  \|P_n \Phi\|_{L_{HS}(Y,H)}^{2},
\]
then, by Lemma \ref{lem_new}, we get
\begin{align*}
\|I_{0,n}(u_n)\|_{L^p_\Omega( L^\infty_T)} 
&\lesssim \left \Vert \|u_n\|^2_H\right\Vert_{L^p_\Omega(L^2_T)} 
 + \sqrt T \|P_n \Phi\|_{L_{HS}(Y,H)}^{2}
\\\notag
&\lesssim  \varepsilon \left\Vert\|u_n\|^2_H\right\Vert_{L^p_\Omega( L^\infty_T)} + \frac{1}{4\varepsilon}\int_0^T \left\Vert\|u_n\|_H^2\right\Vert_{L^p_\Omega( L^\infty_s)}\, {\rm d}s 
+ \sqrt T \|P_n \Phi\|_{L_{HS}(Y,H)}^{2},
\end{align*}
for  any $\varepsilon>0$.
Coming back to \eqref{sti_new_H} and inserting the above estimate, by choosing $\varepsilon$ sufficiently small, we get 
\begin{align}
\label{sti_intermedia_mass}
\left\Vert\|u_n\|^2_H\right\Vert_{L^p_\Omega( L^\infty_T)} 
&\lesssim
\left\Vert\|u_n^0\|^2_H\right\Vert_{L^p_\Omega}+ (1+T) \|P_n \Phi\|_{L_{HS}(Y,H)}^{2}
+\int_0^T \left\Vert\|u_n\|_H^2\right\Vert_{L^p_\Omega( L^\infty_s)}\, {\rm d}s.
\end{align}
From the Gronwall lemma  we infer \eqref{sup_est_H}.
\\
(ii)
 First, notice that the assumption \eqref{ipo_un0-V}  implies that 
$ \mathbb{E}( |\mathcal{E}(u_n^0)|^p)$ is finite; this comes from the definition of the energy \eqref{energy} 
and the embedding $\EA\subset {\mathbb L}^{2+2\sigma}$.
\\
 As far as \eqref{sup_est_V_def} is concerned, since $\|u\|_\EA^2=\|u\|_H^2+\|A^{\frac 12} u\|_H^2$ and we know the bound \eqref{sup_est_H} for the $H$-norm, it is enough to estimate the moments of
 $\|A^\frac 12 u_n\|^2_{L^\infty(0,T;H)}$. We proceed differently in the defocusing and  in the focusing case.
 
We start dealing with the  defocusing case where, by the very definition of the energy $\mathcal E$ ($\equiv \mathcal E_{-1}$), we have
\begin{equation}
\label{ineq_def}
\|A^\frac 12u\|^2_H\lesssim {\mathcal E}(u).
\end{equation}
To conclude is thus enough to get a bound on the moments of $\mathcal{E}(u_n)$.

Let us consider the integral form of \eqref{Ito_energy} and estimate the $L^p_\Omega(L^\infty_T)$-norm of the terms that appear in the r.h.s.. Since $ \gamma>0$ and $\alpha \gamma<0$, we neglect the  terms depending on $\gamma$ and obtain
\begin{multline}\label{Ito-energy-integral}
\mathcal{E}(u_n(t))
\le\mathcal{E}(u_n^0)+ \sum_{j \in \mathbb{Z}_0}\int_0^t  \Real \langle A u_n(s)+ |u_n(s)|^{2\sigma}u_n(s),P_n \Phi e_j^Y \rangle {\rm d}W_j(s)
+\frac 12
 \| A^{\frac 12 } P_n \Phi\|^2_{L_{HS}(Y,H)} t
 \\ +\int_0^t \left( \frac1 2 \||u_n(s)|^{\sigma}P_n \Phi\|^2_{L_{HS}(Y,H)}
+\sigma   \sum_{j \in \mathbb{Z}_0}\langle |u_n(s)|^{2\sigma-2}, [\Real (\overline u_n(s)P_n \Phi e_j^Y)]^2\rangle\right) \, {\rm d}s, \qquad t>0.
\end{multline}
Denote by  $I_{1,n}(u_n)(t)$ the deterministic integral. 
Using  \eqref{H1-e-Lsigma} we estimate
\begin{align*}
\frac12 \||u_n(t)|^{\sigma}&P_n \Phi\|^2_{L_{HS}(Y,H)}+\sigma  
\sum_{j \in \mathbb{Z}_0}\langle |u_n(t)|^{2\sigma-2}, [\Real (\overline u_n(t)P_n \Phi e_j^Y)]^2\rangle 
\\
&\le\left(1+\sigma \right)\|u_n(t)\|^{2\sigma}_{{\mathbb L}^{2+2\sigma}}
\sum_{j \in \mathbb{Z}_0}\|P_n\Phi e_j^Y\|^2_{{\mathbb L}^{2+2\sigma}}
\le \left(1+\sigma \right)\|u_n(t)\|^{2\sigma}_{{\mathbb L}^{2+2\sigma}} \|P_n\Phi\|^2_{L_{HS}(Y,V)}
\\
&\le \frac{1}{2+2\sigma}\|u_n(t)\|^{2+2\sigma}_{{\mathbb L}^{2+2\sigma}}+ C_\sigma \|P_n\Phi\|^{2\sigma +2}_{L_{HS}(Y,V)}
\le \mathcal E(u_n(t))+ C_\sigma \|P_n\Phi\|^{2\sigma +2}_{L_{HS}(Y,V)}.
\end{align*}
The Minkowski inequality thus yields 
\begin{equation*}
\left \Vert I_{1,n}(u_n) \right \Vert_{L^p_\Omega( L^\infty_T)}
\lesssim 
C_\sigma T \|P_n\Phi\|^{2+2\sigma}_{L_{HS}(Y,V)}+ \int_0^T \|\mathcal E(u_n)\|_{L^p_\Omega( L^\infty_s)}\, {\rm d}s.
\end{equation*}
\\
Denote by  $I_{2,n}(u_n)(t)$ the stochastic  integral in \eqref{Ito-energy-integral}. 
Exploiting the Burkholder-Davis-Gundy inequality we infer 
\begin{equation*}
\|I_{2,n}(u_n)\|_{L^p_\Omega( L^\infty_T)} 
\lesssim
 \left \Vert\left( \sum_{j \in \mathbb{Z}_0}\left| \text{Re}\langle Au_n+|u_n|^{2\sigma}u_n, P_n \Phi e_j^Y\rangle\right|^2\right)^{\frac 12} \right\Vert_{L^p_\Omega(L^2_T)}.
\end{equation*}
We have
\begin{equation}\label{stimaAphi}
\left( \sum_{j \in \mathbb{Z}_0}\left| \text{Re}\langle Au_n, P_n \Phi e_j^Y\rangle\right|^2\right)^{\frac 12}
\le \|A^\frac 12u_n\|_H  \| P_n \Phi\|_{L_{HS}(Y,\EA)} 
\le \tfrac 12 \|A^\frac 12u_n\|_H^2 +\tfrac 12   \| P_n \Phi\|^2_{L_{HS}(Y,\EA)} 
\end{equation}
and, thanks to the H\"older and Young inequalities and
the continuous Sobolev embedding $\EA \subset {\mathbb L}^{2+2\sigma}$,
\begin{align}\label{stimaFphi}
\left( \sum_{j \in \mathbb{Z}_0}\left| \text{Re}\langle  |u_n|^{2\sigma}u_n, P_n \Phi e_j^Y\rangle\right|^2\right)^{\frac 12} 
&\le \| |u_n|^{2\sigma}u_n\|_{{\mathbb L}^{\frac{2+2\sigma}{1+2\sigma}}}\|P_n\Phi\|_{\gamma(Y,{\mathbb L}^{2+2\sigma})}
\notag\\
&\lesssim  \| u_n\|^{1+2\sigma}_{{\mathbb L}^{2+2\sigma}}\|P_n \Phi\|_{L_{HS}(Y,\EA)}
\\ \notag
&\lesssim
\frac{1}{2+2\sigma }\| u_n\|^{2+2\sigma}_{{\mathbb L}^{2+2\sigma}} + C_\sigma \|P_n \Phi\|_{L_{HS}(Y,\EA)}^{2+2\sigma}.
\end{align}
Thus, thanks to Lemma \ref{lem_new} we infer for $\varepsilon>0$,
\begin{align}
\label{sti4}
\|I_{2,n}(u_n)\|_{L^p_\Omega( L^\infty_T)} 
&\lesssim \left \Vert \mathcal E(u_n)\right\Vert_{L^p_\Omega(L^2_T)} 
 + \sqrt T (\|P_n \Phi\|_{L_{HS}(Y,\EA)}^{2}+C_\sigma\|P_n \Phi\|_{L_{HS}(Y,\EA)}^{2+2\sigma})
\notag \\
&\lesssim  \varepsilon \| \mathcal E(u_n)\|_{L^p_\Omega( L^\infty_T)} 
+ \frac{1}{4\varepsilon}\int_0^T \| \mathcal E(u_n)\|_{L^p_\Omega( L^\infty_s)}\, {\rm d}s 
\\ \notag
&\quad + \sqrt T (\|P_n \Phi\|_{L_{HS}(Y,\EA)}^{2}+C_\sigma\|P_n \Phi\|_{L_{HS}(Y,\EA)}^{2+2\sigma}).
\end{align}
Inserting the latter estimates in \eqref{Ito-energy-integral}  we  get 
\begin{align}
\label{sti_intermedia_energy}
\|\mathcal{E}(u_n)\|_{L^p_\Omega( L^\infty_T)}
& \lesssim_\sigma \|\mathcal{E}(u_n^0)\|_{L^p_\Omega} + (1+ T)\left( \|P_n \Phi\|_{L_{HS}(Y,\EA)}^{2}+\|P_n \Phi\|_{L_{HS}(Y,\EA)}^{2+2\sigma}\right)
\\\notag
&\qquad + \varepsilon \|\mathcal{E}(u_n)\|_{L^p_\Omega( L^\infty_T)} + \left(\frac{1}{4\varepsilon}+1\right)\int_0^T \|\mathcal{E}(u_n)\|_{L^p_\Omega( L^\infty_s)}\, {\rm d}s.
\end{align}
Choosing $\varepsilon$ sufficiently small,  we infer 
\begin{align*}
\|\mathcal{E}(u_n)\|_{L^p_\Omega( L^\infty_T)}
& \lesssim_\sigma \|\mathcal{E}(u_n^0)\|_{L^p_\Omega}+ \int_0^T \|\mathcal{E}(u_n)\|_{L^p_\Omega( L^\infty_s)}\, {\rm d}s
\\
&
+(1+T)\left( \|P_n \Phi\|_{L_{HS}(Y,H)}^{2}
+\|P_n \Phi\|_{L_{HS}(Y,\EA)}^{2+2\sigma}\right).
\end{align*}
From the Gronwall lemma we get
\[
\|\mathcal{E}(u_n)\|_{L^p_\Omega( L^\infty_T)}
\le e^{CT}
\left( \|\mathcal{E}(u_n^0)\|_{L^p_\Omega}
+(1+T)(\|P_n \Phi\|_{L_{HS}(Y,H)}^{2}
+\|P_n \Phi\|_{L_{HS}(Y,\EA)}^{2+2\sigma}) \right).
\]
Bearing in mind \eqref{sup_est_H}, from the above estimate and \eqref{ineq_def} we infer \eqref{sup_est_V_def}.

Now we consider   \eqref{sup_est_V_def} in the focusing case; this is the only estimate depending on $\gamma$. Since in this case $\mathcal{E}(\cdot)$ could be negative, the bound \eqref{ineq_def} does not hold and we cannot argue as in the defocusing case. We proceed with a direct estimate of the moments of  $\|A^\frac 12 u_n\|^2_{L^\infty(0,T;H)}$, as follows.
Recalling the definition \eqref{energy} of energy,  we have  for $\alpha=1$
\[
\frac 12\|A^\frac 12 u\|^2_H
=  \frac{1}{2+2\sigma}\|u\|_{{\mathbb L}^{2+2\sigma}}^{2+2\sigma} +\mathcal{E}(u) .
\]
According to the evolution  \eqref{Ito_energy} of the energy, we get
\begin{multline}
\label{sti1}
\frac 12\|A^\frac 12 u_n(t)\|^2_H
= 
  \frac{1}{2+2\sigma}\|u_n(t)\|_{{\mathbb L}^{2+2\sigma}}^{2+2\sigma} +\mathcal{E}(u_n(0))
  \\
-2 \gamma \int_0^t\mathcal{E}(u_n(s)) \, {\rm d}s + \gamma \frac \sigma{\sigma+1}\int_0^t \|u_n(s)\|_{{\mathbb L}^{2+2\sigma}}^{2+2\sigma} \, {\rm d}s
\\
-\int_0^t \sum_{j \in \mathbb{Z}_0}\Real \langle -A u_n(s)+ |u_n(s)|^{2\sigma}u_n(s),P_n \Phi e^Y_j \rangle
 {\rm d}W_j(s)+\frac 12
 \|P_n A^{\frac 12} \Phi\|^2_{L_{HS}(Y,H)} \,t
 \\ - \frac 1 2 \int_0^t\||u_n(s)|^{\sigma}P_n \Phi\|^2_{L_{HS}(Y,H)}\, {\rm d}s
- \sigma \int_0^t \sum_{j \in \mathbb{Z}_0}\langle |u_n(s)|^{2\sigma-2}, [\Real (\overline u_n(s)P_n \Phi e^Y_j)]^2\rangle \, {\rm d}s, 
\end{multline}
almost surely and  for all $t \in [0,T]$.

The last two terms in the right-hand side  can be neglected being negative. 
 We will estimate the other  terms  in the $L^p_\Omega( L^\infty_T)$-norm.
 
 We will use the short notation
 \begin{equation}
\label{Z}
Z(u):= \|A^{\frac 12} u\|^2_H + \|u\|^{2+2\sigma}_{{\mathbb L}^{2+2\sigma}}.
\end{equation}
Using \eqref{GN-somma} and the Young inequality we have  
\begin{align}
\label{sti2} \notag
\left \Vert\|u_n^{2+2\sigma}\|_{{\mathbb L}^{2+2\sigma}}\right \Vert_{L^p_\Omega( L^\infty_T)}
&\le 
\varepsilon \left \Vert\|u_n\|_\EA^2\right\Vert_{L^p_\Omega( L^\infty_T)}
+ C_\varepsilon \left \Vert \|u_n \|_{H}^{2+\frac{4\beta\sigma}{2\beta-\sigma d}}\right\Vert_{L^p_\Omega( L^\infty_T)}
\\
&
\le 
\varepsilon \left \Vert\|A^{\frac 12} u_n\|_H^2\right\Vert_{L^p_\Omega( L^\infty_T)}
+\varepsilon \left \Vert\|u_n\|_H^2\right\Vert_{L^p_\Omega( L^\infty_T)}
\\  \notag
&
\quad + C_\varepsilon \left \Vert \|u_n \|_{H}^{2+\frac{4\beta\sigma}{2\beta-\sigma d}}
\right\Vert_{L^p_\Omega( L^\infty_T)}.
\end{align}
We set $I_{3}(u)(t):=-2 \gamma \int_0^t\mathcal{E}(u(s)) \, {\rm d}s + \gamma \frac \sigma{\sigma+1}\int_0^t \|u(s)\|_{{\mathbb L}^{2+2\sigma}}^{2+2\sigma} \, {\rm d}s$.
Recalling \eqref{energy} we have 
\begin{equation*}
I_{3}(u_n)(t)\le \gamma \int_0^t \|u_n(s)\|_{{\mathbb L}^{2+2\sigma}}^{2+2\sigma} \, {\rm d}s
\end{equation*}
and proceeding similarly to the previous estimate we get 
\begin{multline}
\label{sti3}
\|I_{3}(u_n)\|_{L^p_\Omega( L^\infty_T)}
\le \gamma \int_0^T\left\Vert\|u_n\|_{{\mathbb L}^{2+2\sigma}}^{2+2\sigma}\right\Vert_{L^p_\Omega( L^\infty_s)}\, {\rm d} s
\\
\le  \varepsilon \gamma\int_0^T \left \Vert\|A^{\frac 12} u_n\|^2_H \right\Vert_{L^p_\Omega( L^\infty_s)}\,{\rm d}s
+ \varepsilon \gamma\int_0^T \left \Vert\|u_n\|^2_H \right\Vert_{L^p_\Omega( L^\infty_s)}\,{\rm d}s
\\
+C_\varepsilon \gamma\int_0^T\left \Vert \|u_n\|_{H}^{2+\frac{4\beta\sigma}{2\beta-\sigma d}}\right\Vert_{L^p_\Omega( L^\infty_s)}\, {\rm d}s.
\end{multline}
We set $I_{4,n}(u)(t):=-\sum_{j \in \mathbb{Z}_0}\int_0^t \Real \langle -A u(s)+ |u(s)|^{2\sigma}u(s),P_n \Phi e_j^Y \rangle {\rm d}W_j(s)$.  

Proceeding similarly as done in  \eqref{sti4} for $I_{2,n}$  (for  the defocusing case), we obtain 
\begin{align}
%\label{sti4}
\|I_{4,n}(u_n)\|_{L^p_\Omega( L^\infty_T)} 
&\lesssim  \varepsilon \|Z(u_n)\|_{L^p_\Omega( L^\infty_T)} + \frac{1}{4\varepsilon}\int_0^T \|Z(u_n)\|_{L^p_\Omega( L^\infty_s)}\, {\rm d}s 
\\ \notag
&\quad + \sqrt T (\|P_n \Phi\|_{L_{HS}(Y,\EA)}^{2}+\|P_n \Phi\|_{L_{HS}(Y,\EA)}^{2+2\sigma}).
\end{align}
By the very definition \eqref{Z} of $Z$ and  by \eqref{GN-somma} 
we have
\[
Z(u)
\le(1+\varepsilon) \|A^\frac 12 u\|^2_H + \varepsilon \| u\|^2_H
+C_\epsilon \|u \|_{H}^{2+\frac{4\beta\sigma}{2\beta-\sigma d}}
\]
and by Young inequality we  get 
\[
Z(u)
\le(1+\varepsilon) \|A^\frac 12 u\|^2_H + C(1+
 \|u \|_{H}^{2+\frac{4\beta\sigma}{2\beta-\sigma d}}).
\]
We come back to  \eqref{sti1} and insert the latter estimates; 
using again  Young inequality we  get 
\begin{multline}
\label{sti_6}
\tfrac 12 \left\Vert\|A^\frac 12 u_n\|^2_H\right\Vert_{L^p_\Omega( L^\infty_T)} 
\le
  \|\mathcal{E}(u_n^0)\|_{L^p_\Omega}+ \varepsilon(2+\varepsilon) \left \Vert\|A^\frac 12u_n\|^2_H\right\Vert_{L^p_\Omega( L^\infty_T)}
\\
 +
(\gamma\varepsilon+\tfrac {1+\varepsilon}{4\varepsilon}) \int_0^T \left \Vert\|A^\frac 12u_n\|^2_H\right\Vert_{L^p_\Omega( L^\infty_s)}\,{\rm d}s
+ C_\varepsilon (1+\left \Vert \|u_n \|_{H}^{2+\frac{4\beta\sigma}{2\beta-\sigma d}}\right\Vert_{L^p_\Omega( L^\infty_T)})
\\
+ C_\varepsilon (1 + \gamma)\int_0^T(1+\left \Vert \|u_n\|_{H}^{2+\frac{4\beta\sigma}{2\beta-\sigma d}}\right\Vert_{L^p_\Omega(L^\infty_s)})\, {\rm d}s
+ C_T (1+\|P_n \Phi\|_{L_{HS}(Y,\EA)}^{2+2\sigma}).
\end{multline}
Choosing $\varepsilon>0$ small enough and using the estimate \eqref{sup_est_H} for the powers of the mass, 
we get
\begin{multline*}
\left\Vert\|A^\frac 12 u_n\|^2_H\right\Vert_{L^p_\Omega(L^\infty_T)}
\le
C  \|\mathcal{E}(u_n^0)\|_{L^p_\Omega}
+
C(\gamma+1)\int_0^T \left \Vert\|A^\frac 12u_n\|^2_H\right\Vert_{L^p_\Omega(L^\infty_s)}\,{\rm d}s
\\+C_T (\gamma+1)\left(1+    \left\Vert\|u_n^0\|^2_H\right\Vert^{1+\frac{2\beta\sigma}{2\beta-d\sigma}}_{L_\Omega^{p(1+\frac{2\beta\sigma}{2\beta-d\sigma})}}
+ \|P_n \Phi\|_{L_{HS}(Y,H)}^{2+\frac{4\beta\sigma}{2\beta-\sigma d}}
+ \|P_n \Phi\|_{L_{HS}(Y,\EA)}^{2+2\sigma}\right)
.
\end{multline*}
By the Gronwall Lemma and bearing in mind \eqref{sup_est_H} we get the estimate 
\begin{align*}
\left\Vert\|A^\frac 12 u_n\|^2_H\right\Vert_{L^p_\Omega(L^\infty_T)}
\lesssim_{p,T} %C_T(1 + \gamma)(1+ e^{\gamma T})
%(1+ e^{\gamma T})
e^{C\gamma   T}\left(1+   \|\mathcal{E}(u_n^0)\|_{L^p_\Omega}+  \left\Vert\|u_n^0\|^2_H\right\Vert^{1+\frac{2\beta\sigma}{2\beta-d\sigma}}_{L_\Omega^{(1+\frac{2\beta\sigma}{2\beta-d\sigma})p}} + \|P_n \Phi\|_{L_{HS}(Y,H)}^{2+\frac{4\beta\sigma}{2\beta-\sigma d}}
+ \|P_n \Phi\|_{L_{HS}(Y,\EA)}^{2+2\sigma}\right).
\end{align*} 
Bearing in mind the previous estimate for the power moments of the $H$-norm and recalling  \eqref{bound_Phi_Pn} we get 
\eqref{sup_est_V_def}.
\end{proof}

Now we  provide power moments estimates of the mass and the energy
for the Galerkin equation \eqref{eq-Gal}, which are  uniform in time on the interval $[0,\infty)$ and 
with an explicit dependence on the parameter $\gamma$.

For the mass we have the following result.
% PROP
\begin{proposition}
\label{bound_lemma_mass} 
Let Assumptions $\ref{ass-sigma}$ and $\ref{ass_G}$ hold and  fix $\gamma>0$. 
If  $u^0_n$ is an $\mathcal{F}_0$-measurable random variable 
such that
\[
\E \|u_n^0\|^{2}_H<\infty,
\]
then the solution $u_n$ of the Galerkin equation \eqref{eq-Gal} fulfils
\begin{equation}\label{ito-base}
\frac{{\rm d}}{{\rm d}t}\mathbb E {\mathcal M}(u_n(t))+2\gamma \mathbb E{\mathcal M}(u_n(t))
=
\|P_n\Phi\|_{L_{HS}(Y,H)}^2
 \end{equation}
 for every $ t>0$. 
 
 Moreover,  fix $p\in[1, \infty)$.
 If
\[
\E \|u_n^0\|^{2p}_H<\infty,
\]
then
 \begin{equation}\label{p_est_mass}
 \mathbb E [{\mathcal M}(u_n(t))^p]
 \le
 e^{-\gamma p t }\mathbb{E}\left[{\mathcal M}(u_n^{0})^p\right]+
 \|P_n \Phi \|^{2p}_{L_{HS}(Y,H)}C_p \gamma^{-p},\qquad t\ge 0
  \end{equation}
where the constant $C_p$ does not depend on $n$.
\end{proposition}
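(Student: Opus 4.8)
The plan is to work from the It\^o differential of the mass, equation \eqref{mass-n}, and then pass to expectations. First I would take expectations in \eqref{mass-n}: the stochastic integral $\int_0^t 2\,\dualityReal{u_n(s)}{P_n\Phi\,{\rm d}W(s)}$ is a genuine martingale (its quadratic variation is controlled by $\|u_n\|_H^2\|P_n\Phi\|_{L_{HS}(Y,H)}^2$, which has finite expectation on bounded time intervals by Proposition \ref{prop-un-H} and, for the higher-moment estimate, by Proposition \ref{prop-Galerkin-esistenza_bis}(i)), so its expectation vanishes. This immediately yields the linear ODE
\[
\frac{{\rm d}}{{\rm d}t}\mathbb E{\mathcal M}(u_n(t))+2\gamma\,\mathbb E{\mathcal M}(u_n(t))=\|P_n\Phi\|_{L_{HS}(Y,H)}^2,
\]
which is \eqref{ito-base}. (One should justify differentiating under the expectation; the cleanest route is to note that the integral form $\mathbb E{\mathcal M}(u_n(t))=\mathbb E{\mathcal M}(u_n^0)+\|P_n\Phi\|_{L_{HS}(Y,H)}^2\,t-2\gamma\int_0^t\mathbb E{\mathcal M}(u_n(s))\,{\rm d}s$ holds, and $t\mapsto\mathbb E{\mathcal M}(u_n(t))$ is continuous, hence the identity is equivalent to the stated differential form.)

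For the $p$-th moment estimate I would apply the It\^o formula to the function $x\mapsto x^p$ composed with ${\mathcal M}(u_n(t))$, starting again from \eqref{mass-n}. Writing $m(t):={\mathcal M}(u_n(t))$, the mass SDE reads ${\rm d}m(t)=\big(\|P_n\Phi\|_{L_{HS}(Y,H)}^2-2\gamma m(t)\big){\rm d}t+{\rm d}N(t)$ where $N$ is the martingale part with ${\rm d}\langle\langle N\rangle\rangle(t)=4\sum_{j\in\mathbb Z_0}|\Real\langle u_n(t),P_n\Phi e_j^Y\rangle|^2\,{\rm d}t\le 4\,m(t)\,\|P_n\Phi\|_{L_{HS}(Y,H)}^2\,{\rm d}t$. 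Then
\[
{\rm d}\,m(t)^p = p\,m(t)^{p-1}{\rm d}m(t)+\tfrac{p(p-1)}{2}m(t)^{p-2}\,{\rm d}\langle\langle N\rangle\rangle(t),
\]
and after taking expectations (the martingale term drops, using the moment bounds of Proposition \ref{prop-Galerkin-esistenza_bis}(i) to legitimise this) one gets
\[
\frac{{\rm d}}{{\rm d}t}\mathbb E[m(t)^p]\le -2\gamma p\,\mathbb E[m(t)^p]+C_p\|P_n\Phi\|_{L_{HS}(Y,H)}^2\,\mathbb E[m(t)^{p-1}]+C_p\|P_n\Phi\|_{L_{HS}(Y,H)}^2\,\mathbb E[m(t)^{p-1}],
\]
i.e. a differential inequality of the form $\frac{{\rm d}}{{\rm d}t}\mathbb E[m(t)^p]\le -2\gamma p\,\mathbb E[m(t)^p]+C_p\|P_n\Phi\|_{L_{HS}(Y,H)}^2\,\mathbb E[m(t)^{p-1}]$. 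I would then close this by induction on $p$: the base case $p=1$ is \eqref{ito-base} (whose explicit solution gives $\mathbb E m(t)=e^{-2\gamma t}\mathbb E m(0)+\frac{1}{2\gamma}\|P_n\Phi\|_{L_{HS}(Y,H)}^2(1-e^{-2\gamma t})$), and assuming the bound $\mathbb E[m(t)^{p-1}]\le e^{-\gamma(p-1)t}\mathbb E[m(0)^{p-1}]+C_{p-1}\|P_n\Phi\|_{L_{HS}(Y,H)}^{2(p-1)}\gamma^{-(p-1)}$, Young's inequality on the cross term $\|P_n\Phi\|_{L_{HS}(Y,H)}^2\,\mathbb E[m(t)^{p-1}]$ (splitting off a small multiple of $\mathbb E[m(t)^p]$ to be absorbed by the $-2\gamma p\,\mathbb E[m(t)^p]$ term, at the cost of a $\gamma$-dependent constant times $\|P_n\Phi\|_{L_{HS}(Y,H)}^{2p}\gamma^{-p}$) followed by Gronwall's lemma gives \eqref{p_est_mass} with the decaying initial term $e^{-\gamma p t}\mathbb E[m(0)^p]$ and the stationary-scale term $C_p\|P_n\Phi\|_{L_{HS}(Y,H)}^{2p}\gamma^{-p}$.

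The main obstacle is purely bookkeeping: tracking the exact power of $\gamma$ through the Young-inequality absorption so that the final constant scales as $\gamma^{-p}$ (and not some other power), and making sure the constant $C_p$ is genuinely independent of $n$ — the latter follows because every appearance of $\Phi$ enters through $\|P_n\Phi\|_{L_{HS}(Y,H)}$, which by \eqref{bound_Phi_Pn} is dominated by $\|\Phi\|_{L_{HS}(Y,H)}$, and the It\^o/Gronwall constants depend only on $p$. A secondary technical point is justifying that the local martingales arising at each stage are true martingales; this is guaranteed by the a priori moment bounds \eqref{sup_est_H} of Proposition \ref{prop-Galerkin-esistenza_bis}(i), valid on every finite interval $[0,T]$, which is exactly why that proposition is stated before this one.
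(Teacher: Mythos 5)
Your proposal is correct and follows essentially the same route as the paper: take expectations in \eqref{mass-n} to get \eqref{ito-base}, then apply the It\^o formula to $\mathcal M(u_n(t))^p$, bound the quadratic variation by $4\mathcal M(u_n)\|P_n\Phi\|^2_{L_{HS}(Y,H)}$, absorb the lower-order terms into the damping via Young's inequality (producing the $\gamma^{1-p}$ factor that Gronwall turns into $\gamma^{-p}$), and conclude. The only cosmetic difference is your induction on $p$, which is superfluous once the Young absorption is done pathwise, as in the paper's inequality \eqref{massa-epsilon}.
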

% PROOF
\begin{proof}
 Let us start by proving the first equality.
 Consider the identity \eqref{mass-n}. Taking the expected value and using the fact that the stochastic integral is a martingale by Proposition \ref{prop-Galerkin-esistenza_bis}, we obtain \eqref{ito-base}.
Next we follow   \cite[Proposition 2.7]{large-damping}; indeed, even if   \cite{large-damping}  deal with 
the Schr\"odinger equation  in $\mathbb{R}^d$, $d \ge 1$, let us  notice that 
 those computations rely on the Hamiltonian structure of the equation which is independent of the underlying geometry of the domain. Therefore, we do not repeat those computations but quote the basic ones:
 for any $\epsilon>0$ and $p>1$  by the It\^o formula for $\mathcal{M}(u_n(t))^{p}$ we get
 \begin{equation}\label{massa-epsilon}
  \begin{split}
\mathcal{M}&(u_n(t))^{p} 
\le 
 \mathcal{M}( u_n(0))^{p}-(2-\epsilon )\lambda p \int_0^t \mathcal{M}(u_n(s))^{p}\, {\rm d}s
\\
&+C_{\epsilon,p} \|P_n \Phi \|^{2p}_{L_{HS}(Y,H)} \lambda^{1-p} t
+2p \int_0^t \mathcal{M}(u_n(s))^{p-1} \text{Re}\langle u_n(s),\Phi {\rm d}W(s)\rangle.
\end{split}\end{equation}
 Choosing $\epsilon=1$ and  taking the mathematical expectation to get rid of the stochastic integral, 
 by means of Gronwall lemma we get \eqref{p_est_mass}.
\end{proof}

For the energy and  the modified energy we have the following result.
% PROPOSITION energy
\begin{proposition}
\label{bound_lemma_energy}
Let Assumptions $\ref{ass-sigma}$ and $\ref{ass_G}$ hold; fix $\gamma>0$ and  $p\in[1,+ \infty)$. For
the solution $u_n$ of the Galerkin equation \eqref{eq-Gal} we have:
\begin{itemize}
\item[(i)] in the defocusing case ($\alpha=-1$): if $u^0_n$ is an $\mathcal{F}_0$-measurable random variable such that \eqref{ipo_un0-V} holds true, then
 \begin{equation}\label{energia-p-n}
 \mathbb E [{\mathcal E}(u_n(t))^p]\le
   e^{-\gamma p t } \mathbb{E}\left[{\mathcal E}( u_n^{0})^p\right]
   + C\phi_{-1}(\sigma,\gamma,P_n\Phi)^{p}\gamma^{-p},
 \end{equation} 
 for any $t>0$, where the constant $C=C(\beta,\sigma,p)$  is independent of  $n$ and $\gamma$, and 
$\phi_{-1}$ is defined in \eqref{phi_alpha};
 \item [(ii)] in the focusing case ($\alpha=1$), if $u^0$ is an $\mathcal{F}_0$-measurable random variable such that \eqref{ipo_un0-V} and \eqref{ipo_un0-H} hold true, then
 \begin{multline}\label{energia-modif-p-n}
\mathbb E [{\mathcal E}_1(u_n(t))^p]  
\le 
e^{-p \frac{1+\sigma}{1+2\sigma} \gamma t}  
\left[  \mathbb{E}\left[{\mathcal E}_1( u_n^{0})^p\right]+C_1 \| P_n\Phi\|^p_{L_{HS}(Y,H)} \gamma^{-\frac{p}2} \mathbb{E}\left[\mathcal M(u_n^{0})^p\right]\right]
  \\
  +C_2 e^{-p a \gamma t}  
\mathbb{E}\left[\mathcal M(u_n^{0})^{p(\frac12+\frac{2\beta\sigma}{2\beta-\sigma d})}\right]
\| P_n\Phi\|^{p}_{L_{HS}(Y,H)} \gamma^{-\frac{p}2}
+C_3\phi_{1}(d,\beta,\sigma,\gamma, P_n\Phi)^p \gamma^{-p},
 \end{multline}
for any $t>0$, where the positive constants 
$a, C_1,C_2$ and $C_3$ depend on the parameters $d, \beta,\sigma,p$ but neither on $n$ nor on  
$\gamma$, and $\phi_{1}$ is defined in \eqref{phi_alpha}.
\end{itemize}
\end{proposition}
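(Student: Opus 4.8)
The plan is, in both cases, to derive an Itô differential inequality for the real-valued process $t\mapsto\mathcal{E}_\alpha(u_n(t))^p$, to take expectations so as to discard the martingale part (genuine, thanks to the $V$-moment bounds of Proposition~\ref{prop-Galerkin-esistenza_bis} together with the standing integrability hypotheses on $u^0_n$), and to close by Gronwall's lemma. The entire difficulty is the bookkeeping of the powers of $\gamma$ and of $\|P_n\Phi\|$, so that the right-hand sides collapse into the expressions $\phi_\alpha$ of \eqref{phi_alpha}; the mechanism to keep in mind is that the damping produces a dissipation semigroup $e^{-c\gamma t}$, and convolving it against a constant forcing contributes an extra factor $(c\gamma)^{-1}$, which is exactly what upgrades the ``instantaneous'' negative powers of $\gamma$ coming out of Young's inequality with $\gamma$-dependent weights into the ``stationary'' powers of \eqref{energia-p-n}--\eqref{energia-modif-p-n}. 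For the mass this is \cite[Proposition~2.7]{large-damping}, whose computations apply verbatim since they only use the Hamiltonian structure of the equation; here we carry out the analogous argument for the energy, keeping track of $\gamma$, $\alpha$ and $\Phi$.

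\emph{Defocusing case.} Here $\mathcal{E}=\mathcal{E}_{-1}\ge0$ dominates both $\|A^{1/2}u_n\|_H^2$ and $\|u_n\|_{{\mathbb L}^{2+2\sigma}}^{2+2\sigma}$, in \eqref{Ito_energy} the term $\alpha\gamma\frac{\sigma}{\sigma+1}\|u_n\|_{{\mathbb L}^{2\sigma+2}}^{2\sigma+2}$ is dissipative, and the two Itô corrections are bounded — exactly as in the proof of Proposition~\ref{prop-Galerkin-esistenza_bis} — by $(1+\sigma)\|u_n\|_{{\mathbb L}^{2+2\sigma}}^{2\sigma}\|P_n\Phi\|_{L_{HS}(Y,V)}^2$, whose $\|u_n\|^{2+2\sigma}$-part is absorbed into the $\gamma$-dissipation at the cost of the factor $\gamma^{-\sigma}$ in front of $\|P_n\Phi\|_{L_{HS}(Y,V)}^{2+2\sigma}$; all together the drift of $\mathcal{E}(u_n)$ is $\le C\,\phi_{-1}(d,\beta,\sigma,\gamma,P_n\Phi)$. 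Applying the Itô formula to $\mathcal{E}(u_n)^p$, the quadratic-variation term $\mathcal{E}(u_n)^{p-2}\,{\rm d}\langle\!\langle M_n\rangle\!\rangle$ is estimated through \eqref{stimaAphi}, \eqref{stimaFphi} and the coercivity $\|A^{1/2}u_n\|_H^2,\|u_n\|_{{\mathbb L}^{2+2\sigma}}^{2+2\sigma}\lesssim\mathcal{E}(u_n)$; a further Young's inequality (again with a $\gamma$-weight) absorbs the $\mathcal{E}(u_n)^p$-part into $-c\gamma\mathcal{E}(u_n)^p$ and leaves a multiple of $\phi_{-1}^p\gamma^{-(p-1)}$. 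Taking expectations and applying Gronwall's lemma — which supplies the last $\gamma^{-1}$ — yields \eqref{energia-p-n}.

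\emph{Focusing case.} Now $\mathcal{E}$ need not be positive, so one works with $\mathcal{E}_1$; set $\theta:=2+\frac{4\beta\sigma}{2\beta-\sigma d}$. Combining \eqref{Ito_energy} for $\alpha=1$ with \eqref{mass-n}, the latter used through the Itô chain rule to compute ${\rm d}\big(\tfrac12\|u_n\|_H^2\big)$ and ${\rm d}\big(\|u_n\|_H^{\theta}\big)$, one obtains the differential of $\mathcal{E}_1(u_n)$. The key algebraic facts are: the $+\gamma\|u_n\|_H^2$ arising from $-2\gamma\mathcal{E}(u_n)=-2\gamma\mathcal{E}_1(u_n)+\gamma\|u_n\|_H^2+2\gamma G\|u_n\|_H^{\theta}$ cancels the $-\gamma\|u_n\|_H^2$ produced by $\tfrac12\,{\rm d}\mathcal{M}(u_n)$; the bad term $\gamma\tfrac{\sigma}{\sigma+1}\|u_n\|_{{\mathbb L}^{2+2\sigma}}^{2+2\sigma}$ is split by \eqref{tildeH-domina} into $\gamma\tfrac{2\sigma}{1+2\sigma}\mathcal{E}_1(u_n)+2\gamma\sigma G\|u_n\|_H^{\theta}$, the first part reabsorbed into $-2\gamma\mathcal{E}_1(u_n)$ and leaving, after reserving a fraction of the dissipation for the Young terms generated below, a net rate $-\tfrac{1+\sigma}{1+2\sigma}\gamma\,\mathcal{E}_1(u_n)$; and, since $\theta>2$ and $\sigma d<2\beta$ under Assumption~\ref{ass-sigma}, all the $\gamma\|u_n\|_H^{\theta}$-contributions ($-\gamma\theta G$, $+2\gamma G$, $+2\gamma\sigma G$) add up with negative total coefficient and may be dropped. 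The only non-dissipative drift not of $\phi_1$-type is then $\big(\tfrac12+cG\|u_n\|_H^{\theta-2}\big)\|P_n\Phi\|_{L_{HS}(Y,H)}^2$, whose $\|u_n\|_H^{\theta-2}$-factor, via Young's inequality with a $\gamma$-weight absorbed into $-\gamma\theta G\|u_n\|_H^\theta$, produces precisely $\gamma^{-\frac{2\beta\sigma}{2\beta-\sigma d}}\|P_n\Phi\|_{L_{HS}(Y,H)}^{2+\frac{4\beta\sigma}{2\beta-\sigma d}}$, the last term of $\phi_1$.

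Finally one applies the Itô formula to $\mathcal{E}_1(u_n)^p$. The quadratic variation of the martingale part of $\mathcal{E}_1(u_n)$ contains, besides the pure energy part bounded by \eqref{stimaAphi}--\eqref{stimaFphi}, the mass part $\le 4\|u_n\|_H^2\|P_n\Phi\|_{L_{HS}(Y,H)}^2$, its $\|u_n\|_H^{\theta-2}$-weighted analogue from the $G\|u_n\|_H^\theta$ piece, and the cross terms between the mass and energy martingales (carrying one factor $\|P_n\Phi\|_{L_{HS}(Y,H)}$ and one factor $\|P_n\Phi\|_{L_{HS}(Y,V)}$ — this is the origin of the half-integer powers of $\gamma^{-1}$ and of $\|P_n\Phi\|_{L_{HS}(Y,H)}$ in \eqref{energia-modif-p-n}). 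Multiplying by $\mathcal{E}_1(u_n)^{p-2}$ and Young-splitting absorbs the $\mathcal{E}_1(u_n)^p$-part into the dissipation and leaves, together with $C\phi_1^p\gamma^{-(p-1)}$, terms of the form $\mathcal{M}(u_n)^q\|P_n\Phi\|_{L_{HS}(Y,H)}^{r}$ with $q\in\{p,\,p(\tfrac12+\tfrac{2\beta\sigma}{2\beta-\sigma d}),\dots\}$; after taking expectations these are controlled by the uniform-in-time bound \eqref{p_est_mass} applied at each such $q$, i.e. by $e^{-q\gamma t}\mathbb E[\mathcal{M}(u_n^0)^q]+\|P_n\Phi\|_{L_{HS}(Y,H)}^{2q}\gamma^{-q}$. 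The resulting linear differential inequality for $\mathbb E[\mathcal{E}_1(u_n(t))^p]$ has dissipation rate $\tfrac{1+\sigma}{1+2\sigma}p\gamma$; Gronwall's lemma then gives the factor $e^{-\frac{1+\sigma}{1+2\sigma}p\gamma t}$ on $\mathbb E[\mathcal{E}_1(u_n^0)^p]$, while convolving $e^{-\frac{1+\sigma}{1+2\sigma}p\gamma(t-s)}$ against the mass contributions $e^{-q\gamma s}\mathbb E[\mathcal{M}(u_n^0)^q]$ produces the mixed exponentials with the two rates $\tfrac{1+\sigma}{1+2\sigma}$ and $a$ and the coefficients $C_1,C_2,C_3$ of \eqref{energia-modif-p-n}, and convolving it against the constants $\|P_n\Phi\|_{L_{HS}(Y,H)}^{2q}\gamma^{-q}$ and $\phi_1^p\gamma^{-(p-1)}$ supplies the missing $\gamma^{-1}$ and folds everything into $\phi_1^p\gamma^{-p}$. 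The main obstacle is exactly this last step in the focusing case: one must trace every power of $\gamma$, of $\mathcal{M}(u_n^0)$ and of $\|P_n\Phi\|_{L_{HS}(Y,H)}$ generated by the repeated Young inequalities and verify that, after the Gronwall convolution, they collapse into the three advertised terms; in the defocusing case the coercivity of $\mathcal E$ makes all mass contributions unnecessary and the estimate is routine.
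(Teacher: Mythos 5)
Your proposal is correct and follows essentially the same route as the paper: the defocusing case is handled exactly as in \cite{large-damping} (to which the paper simply defers), and for the focusing case you reproduce the paper's computation — It\^o for $\mathcal{E}_1$ combining \eqref{Ito_energy} with \eqref{mass-n} and \eqref{massa-epsilon}, absorption of $\gamma\frac{\sigma}{\sigma+1}\|u_n\|_{{\mathbb L}^{2+2\sigma}}^{2+2\sigma}$ via \eqref{constanteG}/\eqref{tildeH-domina} to get the rate $\frac{1+\sigma}{1+2\sigma}\gamma$, the quadratic-variation bounds \eqref{stimaAphi}--\eqref{stimaFphi}, Young splittings with $\gamma$-weights, the mass-moment bound \eqref{p_est_mass}, and Gronwall. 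The only cosmetic discrepancy is that the half-integer powers of $\gamma^{-1}$ and $\|P_n\Phi\|_{L_{HS}(Y,H)}$ arise in the paper from Young-splitting $\mathcal{E}_1^{p-2}$ against the mass part of the quadratic variation rather than specifically from mass--energy cross terms, but this does not affect the validity of the argument.
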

%PROOF
\begin{proof} 
 First, notice that the assumption \eqref{ipo_un0-V}  implies that  in the defocusing case the energy moment
$ \mathbb{E}( \mathcal{E}_1(u_n^0)^p)$ is finite; by adding 
 assumption\eqref{ipo_un0-H}   we also get that  in the focusing case the modified energy moment
$ \mathbb{E}( \mathcal{E}_1(u_n^0)^p)$ is finite; this comes from the definitions of 
the energy \eqref{energy}  and 
the modified energy \eqref{modifEnergy},
thanks to  the embedding $\EA\subset {\mathbb L}^{2+2\sigma}$.

For $p=1$ 
the bounds for the energy and for the modified energy are obtained from 
the identities \eqref{Ito_energy}  and  \eqref{mass-n} by estimating the right hand sides.  For $p\ge 2$ 
we consider the It\^o formula for the power $p$ and proceed in a similar way. 
For $1<p<2$ we proceed  by means of the H\"older inequality as before, using the
estimate for $p = 2$.

In this way we proved a similar result in \cite{large-damping}. However
there we considered $\beta=1$ and  the spatial domain $D$ was $\mathbb R^d$. This implies that 
 the Gagliardo-Niremberg inequality is now different;
 indeed in $\mathbb R^d$  the Gagliardo-Niremberg inequality is
\begin{equation}
\|u\|_{L^{2+2\sigma}(\mathbb R^d)}
\le C \|u\|_{L^2(\mathbb R^d)}^{1-\frac{\sigma d}{2\beta(1+\sigma)}} \|A^{\frac 12} u\|_{L^2(\mathbb R^d)}^{\frac{\sigma d}{2\beta(1+\sigma)}}
\end{equation}
whereas here in the Gagliardo-Niremberg inequality \eqref{GN-ineq} the last term involves the $\EA$-norm.
Thus  since there are differences with respect to \cite{large-damping} we provide the details to obtain the estimate \eqref{energia-modif-p-n} in the focusing case, 
whereas the defocusing case can be dealt  with as in  \cite{large-damping}. 

We write the It\^o formula for the modified energy 
$\mathcal{E}_1(u)= {\mathcal{E}}(u)+\frac 12 \mathcal M(u)
+G\mathcal M(u)^{1+\frac{2\beta\sigma}{2\beta-\sigma d}}$. 
Recalling \eqref{mass-n} for the mass,   \eqref{massa-epsilon} for the power 
$p=1+\frac{2\beta\sigma}{2\beta-\sigma d}$ of the  mass and \eqref{Ito_energy} for the energy 
when $\alpha=1$ (we neglect the last two terms of \eqref{Ito_energy}), we have
\begin{multline}\label{stima-d-tildeH}
{\rm d}{\mathcal{E}_1}(u_n(t))+2 \gamma {\mathcal{E}_1}(u_n(t)) \,{\rm d}t
\\
\le\gamma \tfrac \sigma{\sigma+1} \|u_n(t)\|_{{\mathbb L}^{2\sigma+2}}^{2\sigma+2} \,{\rm d}t
+ \gamma \left(\epsilon(1+\tfrac {2\beta\sigma}{2\beta-\sigma d})- \tfrac{4\beta\sigma}{2\beta-\sigma d} \right) G \mathcal M(u_n(t))^{1+\frac{2\beta\sigma}{2\beta-\sigma d}} \,{\rm d}t\;
\\
\qquad +C_\epsilon \|P_n \Phi \|^{2+\frac{4\beta\sigma}{2\beta-\sigma d}}_{L_{HS}(Y,H)}  \gamma^{-\frac{2\beta\sigma}{2\beta-\sigma d}}\,{\rm d}t
 +\tfrac 12
 \|P_n \Phi\|^2_{L_{HS}(Y,\EA)} \,{\rm d}t
\\
-\sum_{j \in \mathbb{Z}_0}\Real \langle -A u_n(t)+ |u_n(t)|^{2\sigma}u_n(t),P_n \Phi e^Y_j\rangle \,{\rm d}W_j(t)
\\
+2\left[(1+\tfrac{2\beta\sigma}{2\beta-\sigma d}) G \mathcal{M}(u_n(s))^{\frac{2\beta\sigma}{2\beta-\sigma d}} +1\right]\text{Re}\langle u_n(t),P_n \Phi {\rm d}W(t)\rangle.
\end{multline}
We have $(1-\frac{2\beta}{2\beta-\sigma d})< 0$  by assumption; then choosing  $\epsilon$ small so to have 
$\epsilon(1+\frac {2\beta\sigma}{2\beta-\sigma d})+2\sigma (1-\frac{2\beta}{2\beta-\sigma d})<0 $, we 
estimate the second line in \eqref{stima-d-tildeH} as follows
\[\begin{split}
 \tfrac \sigma{\sigma+1} \|u_n\|_{{\mathbb L}^{2\sigma+2}}^{2\sigma+2} 
&+ \left(\epsilon(1+\tfrac {2\beta\sigma}{2\beta-\sigma d})- \tfrac{4\beta\sigma}{2\beta-\sigma d} \right) G \mathcal M(u_n)^{1+\frac{2\beta\sigma}{2\beta-\sigma d}} 
\\&\underset{\eqref{constanteG}}{\le}
\tfrac \sigma {2+2\sigma} \| u_n\|_\EA^2+\left( \epsilon(1+\tfrac {2\beta\sigma}{2\beta-\sigma d})+2\sigma (1-\tfrac{2\beta}{2\beta-\sigma d})\right) G \mathcal M(u_n)^{1+\frac{2\beta\sigma}{2\beta-\sigma d}}
\\&
\le \tfrac \sigma {2+2\sigma}  \| u_n\|_\EA^2 
\underset{\eqref{nabla-Htilde}}{\le} \tfrac{2\sigma}{1+2\sigma} {\mathcal E}_1(u_n).
\end{split}
\]
Then, we insert this bound in \eqref{stima-d-tildeH} and obtain
\begin{multline}\label{stima-tildeH-con-martingala}
{\rm d}{\mathcal{E}_1}(u_n(t))+2 \tfrac{1+\sigma}{1+2\sigma}\gamma {\mathcal{E}_1}(u_n(t)) \,{\rm d}t
\\
\le \Big(
 C \|P_n\Phi \|^{2+\frac{4\beta\sigma}{2\beta-\sigma d}}_{L_{HS}(Y,H)}  \gamma^{-\frac{2\beta\sigma}{2\beta-\sigma d}} +\tfrac 12
 \| P_n\Phi\|^2_{L_{HS}(Y,\EA)}\Big) \,{\rm d}t
\\
-\sum_{j \in \mathbb{Z}_0}\Real \langle -A u_n(t)+ |u_n(t)|^{2\sigma}u_n(t),P_n \Phi e^Y_j\rangle \,{\rm d}W_j(t)
\\
+2\left[(1+\tfrac{2\beta\sigma}{2\beta-\sigma d}) G \mathcal{M}(u_n(s))^{\frac{2\beta\sigma}{2\beta-\sigma d}} +1\right]
\Real \langle u_n(t),P_n \Phi {\rm d}W(t)\rangle.
\end{multline}
So
considering the mathematical expectation to get rid of the stochastic integrals, we obtain
\begin{equation}
\frac{{\rm d}}{{\rm d}t}\mathbb E \left[{\mathcal{E}_1}(u_n(t))\right]
+2\tfrac{1+\sigma}{1+2\sigma}\gamma \mathbb E\left[{\mathcal{E}_1}(u_n(t))\right]
\le C \|P_n\Phi \|^{2+\frac{4\beta\sigma}{2\beta-\sigma d}}_{L_{HS}(Y,H)}  \gamma^{-\frac{2\beta\sigma}{2\beta-\sigma d}} +\frac 12
 \| P_n\Phi\|^2_{L_{HS}(Y,\EA)}.
\end{equation}
By means of the Gronwall lemma,   we get
  \[
\mathbb E \left[{\mathcal{E}_1}(u_n(t))\right]
\le
e^{-2\frac{1+\sigma}{1+2\sigma}\gamma t} \mathbb{E}\left[{\mathcal{E}_1}(u_n^0)\right]
+C\left(  \|P_n\Phi \|^{2+\frac{4\beta\sigma}{2\beta-\sigma d}}_{L_{HS}(Y,H)}  \gamma^{-\frac{2\beta\sigma}{2\beta-\sigma d}} +
 \| P_n\Phi\|^2_{L_{HS}(Y,\EA)}\right) 
   \gamma^{-1}.
   \]
From the above estimate we infer \eqref{energia-modif-p-n} for $p=1$. 

For $p\ge 2$, we have by It\^o formula 
\begin{equation}\label{Ito-tildeH-alla-m}
{\rm d} {\mathcal{E}_1}(u_n(t))^p\le p {\mathcal{E}_1}(u_n(t))^{p-1} {\rm d} {\mathcal{E}_1}(u_n(t))
+\frac{p(p-1)}2 
{\mathcal{E}_1}(u_n(t))^{p-2}
2r(u_n(t))\ {\rm d}t,
\end{equation}
where we  estimate the quadratic variation of the stochastic integral in 
\eqref{stima-d-tildeH} so to get 
\begin{multline*}
  r(u_n(t))
  \le
\sum_{j \in \mathbb{Z}_0}[\Real \langle A u_n(t)+|u_n(t)|^{2\sigma}u_n(t),P_n \Phi e^Y_j\rangle ]^2 
  \\  +  4\left[G^2(1+\tfrac{2\beta\sigma}{2\beta-\sigma d})^2
    \mathcal M(u_n(t))^{\frac{4\beta\sigma}{2\beta-\sigma d}}+1\right]\sum_{j \in \mathbb{Z}_0}[\text{Re}\langle u_n(t),P_n \Phi e^Y_j \rangle]^2 .
  \end{multline*}
Proceeding as in \eqref{stimaAphi} and  \eqref{stimaFphi},  we get 
\[\begin{split}
 r(u_n(t))\lesssim & \|A^{\frac 12} u_n(t)\|_H^2 \|P_n\Phi\|^2_{L_{HS}(Y,\EA)}
+ \|u_n(t)\|^{2(2\sigma+1)}_{{\mathbb L}^{2\sigma+2}} \|P_n\Phi\|_{L_{HS}(Y,\EA)}^2
\\
&
+ 4\left[ G^2(1+\tfrac{2\beta\sigma}{2\beta-\sigma d})^2
    \mathcal M(u_n(t))^{1+\frac{4\beta\sigma}{2\beta-\sigma d}}  +\mathcal M(u_n(t))\right]
     \|P_n\Phi\|_{L_{HS}(Y,H)}^2.
\end{split}\]
Now to estimate the first term in the r.h.s. we use \eqref{nabla-Htilde}, so
$\|A^{\frac 12} u\|_H^2\le \|u\|_\EA^2 \le 4 \frac{1+\sigma}{1+2\sigma} {\mathcal{E}_1}(u)$. For the second term,
 by means of \eqref{GN-somma} we get
\[\begin{split}
 \|u_n\|^{2(2\sigma+1)}_{{\mathbb L}^{2\sigma+2}} 
 &\le
 \epsilon  \|u_n\|_\EA^{2\frac{2\sigma+1}{\sigma+1}}
  +C_{\epsilon,\sigma}\mathcal M(u_n)^{\frac{2\sigma+1}{\sigma+1}(1+\frac{2\beta\sigma}{2\beta-\sigma d})}        \\
 &\le  \epsilon C_\sigma
        {\mathcal{E}_1}(u_n)^{\frac{2\sigma+1}{\sigma+1}}+C_{\epsilon,\sigma} \mathcal M(u_n)^{\frac{2\sigma+1}{\sigma+1}(1+\frac{2\beta\sigma}{2\beta-\sigma d})}
\end{split}\]
for any $\epsilon>0$. 
Thus we estimate the latter term in \eqref{Ito-tildeH-alla-m} as follows
\[\begin{split}
{\mathcal{E}_1}(u_n)^{p-2} r(u_n)
&\lesssim_{\beta,\sigma,d}
{\mathcal{E}_1}(u_n)^{p-1} \|P_n\Phi\|^2_{L_{HS}(Y,\EA)}
+
{\mathcal{E}_1}(u_n)^{p-\frac1{\sigma+1}} \|P_n\Phi\|^2_{L_{HS}(Y,\EA)}
\\&\;+
{\mathcal{E}_1}(u_n)^{p-2} \mathcal M(u_n)^{\frac{2\sigma+1}{\sigma+1}(1+\frac{2\beta\sigma}{2\beta-\sigma d})}
\|P_n\Phi\|^2_{L_{HS}(Y,\EA)}
\\&\;+
{\mathcal{E}_1}(u_n)^{p-2} [ \mathcal M(u_n)^{1+\frac{4\beta\sigma}{2\beta-\sigma d}} 
+\mathcal M(u_n)]  \|P_n\Phi\|_{L_{HS}(Y,H)}^2
\\
\intertext{   and by Young inequality, used three times with the three different powers $\mathcal E_1$,}
&\le \gamma \epsilon {\mathcal{E}_1}(u_n)^p
  + C \|P_n\Phi\|^{2p}_{L_{HS}(Y,\EA)} \gamma^{1-p}
  + C  \|P_n\Phi\|^{2p(1+\sigma)}_{L_{HS}(Y,\EA)} \gamma^{1-p(1+\sigma)}
\\
& \;+C  [ \mathcal M(u_n)^{\frac{2\sigma+1}{\sigma+1}(1+\frac{2\beta\sigma}{2\beta-\sigma d})}
  +\mathcal M(u_n)^{1+\frac{4\beta\sigma}{2\beta-\sigma d}} +\mathcal M(u_n)]^{\frac p2}
 \|P_n\Phi\|^p_{L_{HS}(Y,H)} \gamma^{1-\frac p2}.
 \end{split}\]
In \eqref{Ito-tildeH-alla-m} we insert this estimate and the previous estimate
\eqref{stima-tildeH-con-martingala} for $d {\mathcal{E}_1}(u_n(t))$,  integrate in time, take the 
mathematical expectation to get rid of the stochastic integrals; 
hence using again the Young inequality for $\epsilon$ small enough
 we obtain
\[\begin{split}
\frac d{dt} \mathbb E &[{\mathcal E}_1(u_n(t))^p]
+p\tfrac{1+\sigma}{1+2\sigma}\gamma 
\mathbb E [{\mathcal E}_1(u_n(t))^p]
\\&\le
C  \left(  \mathbb E\mathcal [M(u_n(t))^{\frac{2\sigma+1}{\sigma+1}(1+\frac{2\beta\sigma}{2\beta-\sigma d})}]
  + \mathbb E[\mathcal M(u_n(t))^{1+\frac{4\beta\sigma}{2\beta-\sigma d}} ]
  + \mathbb E\mathcal M(u_n(t))\right)^{\frac p2}       \|P_n\Phi\|^p_{L_{HS}(Y,H)} \gamma^{1-\frac p2}
\\&\; 
+C \|P_n\Phi\|^{2p}_{L_{HS}(Y,\EA)} \gamma^{1-p}
  + C  \|P_n\Phi\|^{2p(1+\sigma)}_{L_{HS}(Y,\EA)} \gamma^{1-p(1+\sigma)}
%\\&
%+ C \gamma^{1-\frac p2}\left(\|\Phi\|_{L_{HS}(Y,\EA)}^p 
%\mathbb E[ \mathcal M(u(t))^{\frac p2 \frac {2\sigma+1}{\sigma+1}(1+\frac{2\sigma}{2-\sigma d})} ]
%+\|\Phi\|_{L_{HS}(Y,H)}^p  \mathbb E[ \mathcal M(u(t))^{\frac p2 (1+\frac{4\sigma}{2-\sigma d})} ] \right)
\\&\; 
+C \left(  \|P_n\Phi\|^{2}_{L_{HS}(Y,\EA)}
   +\|P_n\Phi\|^{2+\frac{4\beta\sigma}{2\beta-\sigma d}}_{L_{HS}(Y,H)} \gamma^{-\frac{2\beta\sigma}{2\beta-\sigma d}}\right)^p\gamma^{1-p}
%+\|\Phi\|^{2p(\sigma+1)}_{L_{HS}(Y,\EA)} \gamma^{1-p(\sigma+1)} .
%+C \|\Phi\|^{2p\frac{2\sigma+2}{2-\sigma d}}_{L_{HS}(Y,\EA)} \gamma^{-p\sigma+1} 
\end{split}\]
Since $1<\frac{2\sigma+1}{\sigma+1}(1+\frac{2\beta\sigma}{2\beta-\sigma d})<1+\frac{4\beta\sigma}{2\beta-\sigma d}$, the three-masses's  sum can be estimated by 
\[
C\left( \mathbb E[\mathcal M(u_n(t))^{1+\frac{4\beta\sigma}{2\beta-\sigma d}} ]
  + \mathbb E\mathcal M(u_n(t))\right)^{\frac p2} 
\]
and now we bound them by means of \eqref{p_est_mass}. We obtain an inequality which, thanks to Gronwall lemma and after computing the time integrals appearing there, 
 with  some elementary calculations  gives 
\[\begin{split}
\mathbb E [{\mathcal E}_1(u_n(t))^p]
\le &
e^{-p\frac{1+\sigma}{1+2\sigma} \gamma t} \left[{\mathcal E}_1( u_n ^{0})^p
+C \mathcal M( u_n^{0})^p \right]
+C \phi_{1}^p \gamma^{-p}
\\&+C
e^{-p a \gamma t}  
\mathcal M(u_n^{0})^{p(\frac12+\frac{2\beta\sigma}{2\beta-\sigma d})}  
\|P_n\Phi\|^{p}_{L_{HS}(Y,V)}\gamma^{-\frac{p}2}
\end{split}\]
for any $t\ge 0$,  where
\[
a=\min\left(   \tfrac{1+\sigma}{1+2\sigma}, \tfrac 12 +\tfrac{2\beta\sigma}{2\beta-\sigma d} \right).
\]
Recalling \eqref{bound_H_Pn} this gives \eqref{energia-modif-p-n}. \end{proof}

\begin{remark}
Notice that any deterministic initial data $u^{0}\in \EA$ fulfills the assumptions of Propositions \ref{prop-Galerkin-esistenza_bis}-\ref{bound_lemma_energy} for all $p\ge 1$. 
Hence for deterministic initial data the solutions to \eqref{eq-Gal}
enjoy the properties listed in Propositions \ref{prop-Galerkin-esistenza_bis}-\ref{bound_lemma_energy}  for every  finite $p$.
\end{remark}

%%%%%%%%%%%%%%%%%%%%%%%%%%%%%%%%%%%%
\section{Compactness  criteria}
\label{section-compactness}

We present a compactness result, used in the previous sections for the  convergence.

For $0<a<1$ we denote by $C^a([0,T];\EAdual)$  the space of $a$-H\"older continuous functions from the interval $[0,T]$ to  the space $\EAdual$, equipped with the norm
\begin{equation}\label{defHa}
\|u\|_{C^a([0,T];\EAdual)}= \sup_{0\le t\le T} \|u(t)\|_\EAdual +
 \sup_{0\le s<t\le T}\frac{\|u(t)-u(s)\|_\EAdual}{|t-s|^a}.
 \end{equation}
We define the locally convex space
\[
\mathcal Z_T=C([0,T];\EAdual) \cap L^{2+2\sigma}(0,T;{\mathbb L}^{2+2\sigma}) \cap  C_w([0,T];\EA)
\]
with the topology given by the supremum of the corresponding topologies for the spaces in the right-hand side. 
 We recall the compact embedding $\EA\subset {\mathbb L}^{2+2\sigma}$ and the continuous embeddings given in \eqref{embedd-V-L}.
%PROP
\begin{proposition}\label{Dub} 
Let $T>0$ be any finite time and $0<a<1$.
Then the  embedding
\begin{equation}\label{compact-embedding2}
L^{\infty}(0,T;\EA)\cap C^a([0,T];\EAdual)
\subset 
\mathcal Z_T
\end{equation}
is compact.
\end{proposition}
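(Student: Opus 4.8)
The plan is to prove the compactness of the embedding \eqref{compact-embedding2} by a diagonal/sequential argument, splitting the claim into three pieces corresponding to the three spaces whose topologies define $\mathcal Z_T$. First I would take an arbitrary sequence $\{u_n\}_n$ bounded in $L^\infty(0,T;\EA)\cap C^a([0,T];\EAdual)$, say with $\|u_n\|_{L^\infty(0,T;\EA)}\le R$ and $\|u_n\|_{C^a([0,T];\EAdual)}\le R$ for all $n$, and extract a subsequence (not relabelled) converging in all three target topologies. The bound in $C^a([0,T];\EAdual)$ together with Arzel\`a–Ascoli immediately yields, along a subsequence, convergence $u_n\to u$ in $C([0,T];\EAdual)$; this handles the first factor. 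For the third factor, $C_w([0,T];\EA)$, I would argue that the $L^\infty(0,T;\EA)$-bound, combined with the already-established convergence in $C([0,T];\EAdual)$, forces $u(t)\in\EA$ with $\|u(t)\|_\EA\le R$ for every $t$, and that $t\mapsto\langle u(t),v\rangle$ is continuous for each $v\in\EAdual$; then for fixed $v\in\EAdual$ and $\varepsilon>0$ one approximates $v$ in $\EAdual$ by some $w\in\EA$ (using $\EA$ dense in $H$ hence the relevant density in $\EAdual$, or simply that $C([0,T];\EAdual)$-convergence already gives $\sup_t|\langle u_n(t)-u(t),w\rangle|\to0$ when $w$ is tested through the $H$-pairing) and uses the uniform $\EA$-bound to control the error term $|\langle u_n(t)-u(t),v-w\rangle|\le 2R\|v-w\|_\EAdual$, getting $\sup_{0\le t\le T}|\langle u_n(t)-u(t),v\rangle|\to0$. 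This is the standard interpolation-of-topologies trick for weak-$*$ continuity.

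The remaining and most substantial step is convergence in $L^{2+2\sigma}(0,T;\mathbb L^{2+2\sigma})$. Here I would invoke the compact embedding $\EA\subset\mathbb L^{2+2\sigma}$ stated in \eqref{H1-e-Lsigma} together with the continuous embedding $\mathbb L^{2+2\sigma}\subset\EAdual$ from \eqref{embedd-V-L}, which sets up the classical Aubin–Lions–Simon situation $\EA\hookrightarrow\hookrightarrow\mathbb L^{2+2\sigma}\hookrightarrow\EAdual$. The precise statement I want is: a set bounded in $L^\infty(0,T;\EA)$ and equi-continuous (indeed uniformly H\"older) as maps into $\EAdual$ is relatively compact in $L^{2+2\sigma}(0,T;\mathbb L^{2+2\sigma})$; equivalently, I would apply the Simon compactness lemma (J.~Simon, \emph{Compact sets in $L^p(0,T;B)$}, Ann. Mat. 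Pura Appl. 1987), whose hypotheses are exactly a uniform bound in $L^{2+2\sigma}(0,T;\EA)$ (which follows from the $L^\infty(0,T;\EA)$-bound) plus a uniform bound on the time-translations in $L^{2+2\sigma}(0,T-h;\EAdual)$ of order $h^a$ (which follows from the $C^a([0,T];\EAdual)$-bound, since $\|u_n(\cdot+h)-u_n(\cdot)\|_{L^{2+2\sigma}(0,T-h;\EAdual)}\le T^{1/(2+2\sigma)}R\,h^a$). The interpolation inequality $\|w\|_{\mathbb L^{2+2\sigma}}\le C\|w\|_\EA^{1-\theta}\|w\|_\EAdual^{\theta}$ valid for some $\theta\in(0,1)$ (by the Gagliardo–Nirenberg interpolation between $\EA$ and $\EAdual$, the same mechanism underlying \eqref{emb}) then converts the $\EAdual$-convergence already obtained, plus the uniform $\EA$-bound, directly into $L^{2+2\sigma}(0,T;\mathbb L^{2+2\sigma})$-convergence without even needing the full Simon machinery: for each fixed $t$, $\|u_n(t)-u(t)\|_{\mathbb L^{2+2\sigma}}\le C(2R)^{1-\theta}\|u_n(t)-u(t)\|_\EAdual^{\theta}$, and one integrates in $t$ using dominated convergence (the integrand is bounded by $C(2R)^{1-\theta}(2R)^\theta$, a constant) after noting $\sup_t\|u_n(t)-u(t)\|_\EAdual\to0$.

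Finally I would assemble the three convergences: the extracted subsequence converges to $u$ simultaneously in $C([0,T];\EAdual)$, in $L^{2+2\sigma}(0,T;\mathbb L^{2+2\sigma})$, and in $C_w([0,T];\EA)$, hence in the supremum topology of $\mathcal Z_T$; since the limit $u$ lies in $L^\infty(0,T;\EA)\cap C^a([0,T];\EAdual)\subset\mathcal Z_T$, this establishes that bounded sets of $L^\infty(0,T;\EA)\cap C^a([0,T];\EAdual)$ are relatively compact in $\mathcal Z_T$, i.e. the embedding is compact. The main obstacle is the $L^{2+2\sigma}$-in-time convergence; the clean way around it is the pointwise-in-time interpolation argument sketched above, which sidesteps any delicate version of Aubin–Lions adapted to $\mathbb L^{2+2\sigma}$ and reduces everything to the uniform H\"older bound in $\EAdual$ together with the compact Sobolev embedding $\EA\subset\mathbb L^{2+2\sigma}$ supplying the interpolation inequality.
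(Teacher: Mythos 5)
Your argument is correct, but it takes a genuinely different route from the paper's. The paper disposes of the proposition in a few lines: it observes that a uniform $C^a([0,T];\EAdual)$ bound gives equicontinuity in $C([0,T];\EAdual)$ and then invokes \cite[Proposition 4.2]{BHW-2019}, which states precisely that a bounded subset of $L^{\infty}(0,T;\EA)$ consisting of functions equicontinuous into $\EAdual$ is relatively compact in $\mathcal Z_T$. You instead reprove that cited result from scratch: Arzel\`a--Ascoli for the $C([0,T];\EAdual)$ factor, a density/uniform-bound argument for $C_w([0,T];\EA)$, and a pointwise-in-time interpolation inequality $\|w\|_{{\mathbb L}^{2+2\sigma}}\lesssim \|w\|_{\EA}^{1-\theta}\|w\|_{\EAdual}^{\theta}$ for the Lebesgue factor. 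The interpolation step is the substantive novelty: it is legitimate here because Assumption \ref{ass-sigma} makes the embedding $\EA\subset{\mathbb L}^{2+2\sigma}$ strictly subcritical (most cleanly, combine \eqref{GN-ineq} with $\|w\|_H^2=\langle w,w\rangle\le \|w\|_{\EAdual}\|w\|_{\EA}$ rather than appealing vaguely to ``Gagliardo--Nirenberg between $\EA$ and $\EAdual$''), and it upgrades the $C([0,T];\EAdual)$ convergence directly to uniform-in-time ${\mathbb L}^{2+2\sigma}$ convergence, so you never need the Aubin--Lions--Simon machinery you mention as a fallback. What your route buys is a self-contained proof and a marginally stronger conclusion (convergence in $C([0,T];{\mathbb L}^{2+2\sigma})$, not just $L^{2+2\sigma}$ in time); what the paper's route buys is brevity. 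Two details you should make explicit for completeness: Arzel\`a--Ascoli also requires pointwise relative compactness of $\{u_n(t)\}_n$ in $\EAdual$, which is supplied by the uniform $\EA$ bound together with the compact embedding $\EA\subset\EAdual$; and the $L^{\infty}(0,T;\EA)$ bound controls $\|u_n(t)\|_{\EA}$ only for a.e.\ $t$, so it must be upgraded to every $t$ (via continuity in $\EAdual$ and weak lower semicontinuity of the $\EA$-norm) before you may write $|\langle u_n(t)-u(t),v-w\rangle|\le 2R\,\|v-w\|_{\EAdual}$ for all $t\in[0,T]$.
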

% proof
\begin{proof}  
We recall \cite[Proposition 4.2]{BHW-2019} which involves 
equicontinuous functions\footnote{A set $S$  of functions is equicontinuous in $C([0,T];\EAdual) $ when 
		\begin{align*}
		\lim_{\delta \to 0} \sup_{u\in S} \sup_{\vert t-s\vert\le \delta} \Vert u(t)-u(s)\Vert_{\EAdual}=0.
		\end{align*}}. It states that a bounded set $K\subseteq L^{\infty}(0,T;\EA)$  consisting of functions equicontinuous in $C([0,T];\EAdual)$
 is relatively compact in $C_w([0,T];\EA) \cap L^{2+2\sigma}(0,T;   {\mathbb L}^{2+2\sigma})\cap C([0,T];\EAdual)$. 
\\
Now consider a set $K$ such that
\[
\sup_{u\in K}\left( \|u\|_{L^\infty(0,T;\EA)}+\|u\|_{C^a([0,T];\EAdual)}\right)\le M
\]
for some positive $M$.
Then, we have
\[
\|u(t)-u(s)\|_{\EAdual}
\le
|t-s|^a \|u\|_{C^a([0,T];\EAdual)}
\le
M |t-s|^a.
\]
This implies the equicontinuity in $C([0,T];\EAdual)$.  We thus conclude by applying \cite[Proposition 4.2]{BHW-2019}. 
\end{proof}

%%%%%%%%%%%%%%%%
\section{Geometry of the domain}
\label{domini}
The proof of our main results, Theorems \ref{teoUstaz}, \ref{mass_density} and \ref{hatHdimle2}, relies on the following basic facts:
\begin{enumerate}[label=\textbf{(\arabic*)}] \itemsep 0.3em
    \item \label{fact1} the operator $A$ is a self-adjoint non-negative operator with a compact resolvent in the space $H$,
    \item \label{fact2} the embedding $V:=\mathcal{D}(A^\frac12) \subset {\mathbb L}^{2+2\sigma}$ is compact,
    \item \label{fact3} we obtain apriori bounds for the solution by exploiting the Hamiltonian structure of the equation, that is working with the mass and the energy functionals.
\end{enumerate}
Throughout the paper we considered, as an example of the operator $A$, the $\beta$-fractional power of the negative Laplacian, on an open bounded subset $D$ of $\mathbb R^d$ with $C^\infty$-boundary\begin{footnote}{
We work under these assumptions on the domain for simplicity. Generalizations including less regular domains $D$ can be found in \cite{Grubb}.
}\end{footnote}, with homogeneous Dirichlet boundary conditions. Assumption \ref{F} ensures that property \ref{fact2} above holds true.

Other possible examples for the operator $A$ are the following:
\begin{itemize}
\item the $\beta$-fractional power of the negative Laplacian, on an open bounded subset $D$ of $\mathbb R^d$ with $C^\infty$-boundary, with Neumann boundary conditions, that is $A:=(-\Delta_{\text{Neu}})^\beta$, % and $V=\mathcal{D}(A^{\frac 12})$,
\item the $\beta$-fractional power of the negative Laplace–Beltrami operator on a compact $d$-dimensional Riemannian
manifold $(M,g)$ without boundary, equipped with a Lipschitz metric $g$, that is $A:= (-\Delta_g)^\beta$.
\end{itemize}
These operators satisfy property \ref{fact1} and property \ref{fact2} when Assumption \ref{F} is in force; we refer to \cite[Sections 2 and 3]{BHW-2019} for a proof.

Therefore, although all the results of the paper are stated considering Dirichlet boundary conditions, the same results continue to hold in the case of homogeneous Neumann boundary conditions. The case of a $d$-dimensional compact Riemannian manifold without boundary is another possible setting. We emphasize that we never used the Poincar\'e inequality, which is available in $\mathcal D((-\Delta_{\text{Dir}})^{\frac b 2})$ for $b>\frac 12$, and is  in fact  largely used in the analysis of  the  NLS equation with $b=1$. This choice has been made 
precisely with the purpose  to generalize the results to the other geometries mentioned above, where 
the Poincar\'e inequality does not hold. We also emphasize that the proof of the results are based on apriori estimates obtained by working with mass and energy functionals. These estimates do not depend on the geometry of the domain, in contrast to the Strichartz estimates frequently used in the literature, which instead strongly depend on the geometry of the domain and the boundary conditions considered.

\smallskip
To conclude, although it is never used in the proofs, for completeness, we give below a characterization of the domain of the operator $A$ , when $A=(-\Delta_{\text{Dir}})^\beta$ and $A=(-\Delta_{\text{Neu}})^\beta$. % and $A=(-\Delta_g)^\beta$. 
Further details can be found in \cite{Grubb}.
\\
Let $-\Delta_B$ be the realization of $-\Delta$ in $H$ with domain $\{u \in H^2(D)  :  Bu=0\}$; here $Bu$ stands for either the Dirichet boundary condition or the Neumann boundary condition. In details, 
\[
Bu=B_ju, \qquad \text{for} \quad j=0 \quad \text{or} \quad j=1,
\]
where $B_0=\gamma_{|\partial D}$ and $B_1=\gamma_{|\partial D}\partial_\nu$. Here by $\gamma_{|\partial D}$ we denote the trace operator and by $\nu$ the outward normal unit vector to $\partial D$.
In \cite[Theorem 2.2]{Grubb}, for any real $b>0$, the author provides the following characterization 
\small{
\[
\mathcal D((-\Delta_B)^{\frac b 2})
=\begin{cases}   
  H^b(D), &0\le b<j+\frac 12,
%  \\\{u \in H^\beta(D): d^{-\frac 12}B u\in H  \} ,& \beta=\frac 12
% \\ \{u \in H^\beta(D): Bu=0 \}, & \frac12<\beta<\frac52,
  \\
 \{u \in H^b(D): Bu=B(-\Delta_B) u =\dots= B (-\Delta_B)^k u=0 \}, & \frac 12+j+2k<b<\frac 52+j+2k,
 \\
  \{u \in H^b(D): %d^{-\frac 12}B u, {\color{red}\ldots} \in H  \}
  B(-\Delta_B)^l u=0\ \text{for} \ l<k, B(-\Delta_B)^ku \in \bar{H}^{\frac 12}(\overline D)\},& b=\frac 12+j+2k,
 \end{cases}
\]
}
where $k \in \mathbb{N}_0$.
Here $\bar{H}^\frac12(\overline D)$ stands for the space of functions in $H^\frac12(\mathbb{R}^n)$ with support in $\overline{D}$.

%Theorem \ref{dom_D_N} and \cite[Theorem 4.3.2 (1)]{Triebel} yield, in particular,\begin{equation}\label{realization_sD}\mathcal{D}(A_D^{\frac s2})=\begin{cases}H^{s} \ \text{for} \ s \in \left(0,\frac 12\right),\\H^{s}_0\ \text{for} \ s \in \left(\frac 12, 1\right].\end{cases}\end{equation}

%RICHIAMI A VARI RISULTATI.
%Set $H_0^b(D)$ be the closure of $C^\infty_0(D)$ in $H^b(D)$ for $b>0$.\\
%Theorem 11.1 by Lions - Magenes \cite{LionsMag}\\Let $D$ be a bounded smooth domain. 
%The space $C^\infty_0(D)$ is dense in $H^b(D)$  if and only if
%\[b\le \frac 12\]
%(then $H^b_0(D)=H^b(D)$). If $b>\frac 12$, we have $H^b_0(D)$ is striclty contained in $H^b(D)$.\bigskipTheorem 11.5 by Lions - Magenes \cite{LionsMag}\\
%Let $D$ be a bounded smooth domain. Let $b>\frac 12$. The the following two conditions are equivalent
%\begin{enumerate}\item $u \in H^b_0(D)$\item  $u  \in H^b(D): \frac{\partial^j u}{\partial \nu^j}=0 \text{ for } 0\le j<b-\frac 12.$\end{enumerate}where $\frac{\partial^j u}{\partial \nu^j}$ denotes the normal $j$-order derivative on $\partial D$, oriented toward the interior of $D$.\\Riguarda Th 11.7 , Th 11.6\\e TH 11.2 e 11.3per il caso $\frac 12< b\le 1$ e $0<b<\frac 12$.\\The dual space:by definition we set $H^{-b}=\left( H^b_0(D)\right)^\prime$ for $b>0$.\\\bigskip$\EA$ the completition of $C^{\infty}_0(D)$  in $H^{\beta}(D)$, see \cite[Definition 4.2.1(2)]{Triebel}.

%%%%%%%%%%%%%%%%%%%%%%%%%%%%%%%%%%%%
\section{Regularity properties of the law of the mass functional}
\label{section-dense}

The proof of  Theorem \ref{mass_density} is a minimal modification of that in  \cite{Sh11}. 
For reader's convenience we write it, also because 
our noise  \eqref{noise-nell-assumption}
is a bit more general than that of Shirikyan, who assumes $\varphi_j=\varphi_{-j}$ for any $j$; however we allow to have a real noise or a pure imaginary one to get some partial results (e.g., to obtain that the random variable $U_\gamma(0)$ has no atom in $0$).

The results  of this section exploit some properties of local time of  a process  associated to the perturbed and damped equation 
\eqref{eqS-sqrt-gamma}. 
Then for vanishing $\gamma$ the limit  enjoys the same property.

We start with a general result on  local time. This is   Theorem 2.2 in \cite{Sh11}, which 
 provides a useful relation \eqref{proprieta-local-time} involving the local time defined by the Tanaka-Meyer formula 
 \eqref{def-local-time}.
 
 \begin{theorem}
 Consider the real valued semimartingale
 \begin{equation}\label{semimartingale}
 y(t)=y(0)+\int_0^t b(s) {\rm d}s +\sum_{j \in{\mathbb Z}_0} \int_0^t \theta_j(s) {\rm d}\beta_j(s)
 \end{equation}
 where $\{\beta_j\}_j$ is a sequence of independent real Brownian motions defined on 
 $(\Omega,\mathcal F, \mathbb P)$ with a standard filtration, and the processes $b$ and $\theta_j$ 
 are adapted to this filtration  and such that
 \[
 \mathbb E \int_0^t\left( |b(s)|+ \sum_{j \in{\mathbb Z}_0} | \theta_j(s)|^2\right) {\rm d}s<\infty \quad \forall t>0.
 \]
 Then for any $a\in \mathbb R$, the local time $\Lambda_t(a)$ defined by
 \begin{equation}\label{def-local-time}
 \Lambda_t(a)=|y(t)-a| - |y(0)-a| - \sum_{j \in{\mathbb Z}_0}
 \int_0^t \pmb{1}_{(a,+\infty)}(y(s))\theta_j(s) {\rm d}\beta_j(s)
 - \int_0^t \pmb{1}_{(a,+\infty)}(y(s)) b(s) {\rm d}s,\qquad t>0
 \end{equation}
 possesses the following properties:
 \begin{enumerate}
 \item
 The mapping $(t,a,\omega)\mapsto \Lambda_t(a)(\omega)$ is measurable and 
 for any $a\in \mathbb R$ the process $\{\Lambda_t(a)\}_t$ is adapted, continuous and non-decreasing.
 Moreover, for every $t\ge 0$ and $\mathbb P$-a.s. the function 
 $a \mapsto \Lambda_t(a)(\omega)$ is right-continuous.
 \item
For any non-negative Borel-measurable function $h:\mathbb R\to \mathbb R$, we have $\mathbb P$-a.s.
  \begin{equation}\label{proprieta-local-time}
  \int_{-\infty}^{+\infty} h(a) \Lambda_t(a){\rm d}a=\frac 12 
  \sum_{j \in{\mathbb Z}_0} \int_0^t h(y(s))  | \theta_j(s)|^2  {\rm d}s,\qquad t>0.
  \end{equation}
  \end{enumerate}
 \end{theorem}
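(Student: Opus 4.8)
The statement is a classical one about semimartingale local time (Tanaka--Meyer), so the plan is to reduce everything to the one-dimensional It\^o calculus for the process $y$, and then invoke, or re-derive, the occupation times formula. First I would observe that under the stated integrability hypotheses on $b$ and $\{\theta_j\}$, the series $\sum_{j\in\mathbb Z_0}\int_0^t\theta_j(s)\,{\rm d}\beta_j(s)$ converges in $L^2(\Omega)$ uniformly on compacts, so $y$ is a genuine continuous real-valued semimartingale with quadratic variation $\langle y\rangle_t=\sum_{j\in\mathbb Z_0}\int_0^t|\theta_j(s)|^2\,{\rm d}s$. Once $y$ is recognized as such, the whole statement is just the specialization of the standard theory (see Revuz--Yor, Ch.~VI, or Karatzas--Shreve, Ch.~3) to this $y$; so the proof is essentially a matter of carefully transcribing that theory with the explicit form of $\langle y\rangle$.

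For \emph{part (1)}, I would apply the Tanaka--Meyer formula for $|y(t)-a|$: since $x\mapsto|x-a|$ is convex with distributional second derivative $2\delta_a$, the Tanaka formula gives
\[
|y(t)-a|=|y(0)-a|+\int_0^t \mathrm{sgn}(y(s)-a)\,{\rm d}y(s)+L_t^a,
\]
where $L_t^a$ is by definition a continuous non-decreasing process, increasing only on $\{s:y(s)=a\}$. Comparing with \eqref{def-local-time} — where the stochastic and drift integrals are written out using ${\rm d}y(s)=b(s)\,{\rm d}s+\sum_j\theta_j(s)\,{\rm d}\beta_j(s)$ and $\mathrm{sgn}$ is taken in the convention $\mathbf 1_{(a,+\infty)}-\mathbf 1_{(-\infty,a]}$ (this accounts for the fact that only $\mathbf 1_{(a,+\infty)}$ appears, the complementary indicator being absorbed into $|y(t)-a|-|y(0)-a|$ via $|x-a|=\int \mathrm{sgn}$, modulo the usual factor-of-two/right-continuity bookkeeping) — one identifies $\Lambda_t(a)$ with the Tanaka local time (up to the normalization chosen here). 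The joint measurability in $(t,a,\omega)$, adaptedness, continuity and monotonicity in $t$, and right-continuity in $a$ are then exactly the standard regularity statements for semimartingale local time; I would cite these rather than reprove them, as they are genuinely classical and the excerpt is presenting this as a known ``Theorem 2.2 in \cite{Sh11}''.

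For \emph{part (2)}, the identity \eqref{proprieta-local-time} is the occupation times formula: for any non-negative Borel $h$,
\[
\int_{-\infty}^{+\infty}h(a)L_t^a\,{\rm d}a=\int_0^t h(y(s))\,{\rm d}\langle y\rangle_s .
\]
Substituting $\langle y\rangle_s=\sum_{j\in\mathbb Z_0}\int_0^s|\theta_j(r)|^2\,{\rm d}r$, i.e.\ ${\rm d}\langle y\rangle_s=\sum_{j\in\mathbb Z_0}|\theta_j(s)|^2\,{\rm d}s$, yields the right-hand side of \eqref{proprieta-local-time}; the factor $\tfrac12$ is precisely the discrepancy between the ``absolute-value'' normalization used in \eqref{def-local-time} and the symmetric normalization in which the occupation formula has no constant, so it must be tracked consistently with the convention fixed in part (1). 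I would prove the occupation formula in the usual way: verify it first for $h=\mathbf 1_{(-\infty,a]}$ (this is just the Tanaka formula again, rearranged), then extend to indicators of intervals, to simple functions by linearity, and to general non-negative Borel $h$ by monotone convergence, using the continuity of $a\mapsto L_t^a$ in the relevant (right-continuous) sense and Fatou/monotone convergence on both sides.

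\textbf{Main obstacle.} There is no deep difficulty here — the result is classical — so the real work, and the only place one can go wrong, is bookkeeping the normalization: the paper's $\Lambda_t(a)$ is defined via $|y(t)-a|$ with only the one-sided indicator $\mathbf 1_{(a,+\infty)}$ in the integrals, which is a slightly nonstandard presentation, and one must pin down exactly which local time (right, left, or symmetric) this equals so that the constant $\tfrac12$ in \eqref{proprieta-local-time} comes out correctly and the ``right-continuous in $a$'' clause in part (1) is the right one. The secondary technical point is justifying the convergence of the infinite series defining the martingale part and its quadratic variation under only the stated $L^1$-in-time-and-$\Omega$ bound on $\sum_j|\theta_j|^2$, which is routine via the Burkholder--Davis--Gundy inequality and completeness of the space of continuous $L^2$-martingales; I would dispatch this first so that all subsequent appeals to one-dimensional semimartingale calculus are legitimate.
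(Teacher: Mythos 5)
The paper itself offers no proof of this statement: it is quoted as Theorem~2.2 of \cite{Sh11} and used as a black box, so there is no internal argument to compare against. Your reduction --- sum the series into a continuous $L^2$-martingale via the It\^o isometry/BDG, identify $\langle y\rangle_t=\sum_{j}\int_0^t|\theta_j(s)|^2\,{\rm d}s$, then invoke the Tanaka--Meyer formula for part (1) and the occupation times formula for part (2) --- is the standard derivation and is the right route; in that sense your plan is sound and does more than the paper does.

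The one point you explicitly defer (``modulo the usual factor-of-two/right-continuity bookkeeping'') is exactly where the statement as printed is not internally consistent, and your suggestion that the complementary indicator is ``absorbed into $|y(t)-a|-|y(0)-a|$'' does not work. With $|y(\cdot)-a|$ and only the indicator $\mathbf 1_{(a,+\infty)}$, formula \eqref{def-local-time} literally defines
\[
\Lambda_t(a)=L^a_t-\int_0^t\mathbf 1_{(-\infty,a]}(y(s))\,{\rm d}y(s),
\]
where $L^a$ is the local time of the symmetric Tanaka formula $|y(t)-a|=|y(0)-a|+\int_0^t\operatorname{sgn}(y(s)-a)\,{\rm d}y(s)+L^a_t$; this retains a nontrivial martingale part, so it is neither non-negative nor non-decreasing and property (1) would fail. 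The version consistent with both claims is obtained by reading $(y(\cdot)-a)^+$ in place of $|y(\cdot)-a|$: then $\Lambda_t(a)=\tfrac12 L^a_t$ is one half of the right local time, it is continuous, non-decreasing and right-continuous in $a$, and the occupation formula $\int_{\mathbb R} h(a)L^a_t\,{\rm d}a=\int_0^t h(y(s))\,{\rm d}\langle y\rangle_s$ produces \eqref{proprieta-local-time} with exactly the factor $\tfrac12$. Your proof should make this identification explicit rather than leave it as bookkeeping; the remaining steps of your outline (convergence of the martingale series, monotone-class extension of the occupation formula from indicators to non-negative Borel $h$) are routine and correct.
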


Using different functionals $y$  of the stationary solution $U_\gamma$  of the stochastic and damped NLS equation,
Shirikyan proves three results (see Theorem 3.1 in  \cite{Sh11}). 
Now we state them and then we show  that with minimal changes they can be obtained in our setting, i.e. when the stochastic and damped NLS equation is
\eqref{eqS-sqrt-gamma}. 
We denote by $\mu_\gamma$ the   marginal law of the stationary process  $\hat U_\gamma$
given in Proposition \ref{Prop-no-gamma} and by  $l$ the Lebesgue measure on the real line.

%% TEO
\begin{theorem}
Let Assumptions \ref{ass-sigma} and  \ref{ass_G} be in force and fix  $\gamma>0$.
Let $\mu_\gamma$ be  the law of $U_\gamma(t)$ (at fixed time), where $U_\gamma$ is any stationary martingale solution of equation \eqref{eqS-sqrt-gamma}  as given in Proposition \ref{Prop-no-gamma}.
\begin{enumerate}
\item
If $\varphi_j\neq 0$ for some $j \in \mathbb{Z}_0$, then $\mu_\gamma$ has no atoms in zero.
\item
If   $\varphi_{j_1}\varphi_{j_2}\neq 0$ for some $j_1\neq j_2$,
then there exists a positive constant $c_\Phi$ such that 
\begin{equation}\label{stima-su-palla}
\mu_\gamma\left(\|u\|_H \le \delta \right)  \le\frac {1+ \|\Phi\|_{L_{HS}(Y,H)}^2}{c_\Phi} \ \delta \qquad \forall \delta \ge 0.
\end{equation}
\item  
if $\varphi_j \ne 0$ for all $j \in \mathbb{Z}_0$, 
then there exists a continuous increasing function $p(r)$ going to zero with $r$ such that 
\begin{equation*}
\mu_\gamma\left(\mathcal{M}(u) \in \Gamma \right) \le p(l(\Gamma))
\end{equation*}
for any Borel subset $\Gamma$ of $\mathbb{R}$.
\end{enumerate}
\end{theorem}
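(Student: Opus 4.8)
The plan is to prove all three assertions by the same device used by Shirikyan in \cite{Sh11}: apply the local-time theorem above to a well-chosen real scalar functional $y=y_\gamma$ of the stationary process $U_\gamma$, and read off the conclusion from the identity \eqref{proprieta-local-time}. The one point where our framework forces a (minimal) modification of \cite{Sh11} is the lower regularity of our solutions --- $U_\gamma$ has paths in $C_w([0,\infty);\EA)$ and $F(U_\gamma)\in\EAdual$ only --- so one must first verify that each $y_\gamma$ is a genuine It\^o semimartingale of the form \eqref{semimartingale} satisfying $\E\int_0^t(|b(s)|+\sum_j|\theta_j(s)|^2)\,\mathrm ds<\infty$. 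This is the case because every $y_\gamma$ below is obtained by pairing $U_\gamma(s)$, or $\mathcal M(U_\gamma(s))$, with a basis vector $e_j\in\EA$: it is adapted to the filtration of the martingale solution and driven by the real Wiener processes $W_j$; and the integrability follows from $|\langle F(u),e_j\rangle|\le\|F(u)\|_{\EAdual}\|e_j\|_{\EA}\lesssim\|e_j\|_{\EA}\|u\|_{\EA}^{1+2\sigma}$ (cf.\ \eqref{F-grandi}) together with the $\gamma$-fixed moment bounds \eqref{uniform-mass}--\eqref{sti_tight-gamma} of Proposition \ref{Prop-no-gamma}. Throughout I would exploit the stationarity of $U_\gamma$: it turns $\E\int_0^t\Psi(U_\gamma(s))\,\mathrm ds$ into $t\,\E\Psi(U_\gamma(0))$ and gives $\E|y(t)-a|=\E|y(0)-a|$ for every $a$. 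Inserting this into \eqref{def-local-time} (whose stochastic integral has zero mean) yields the basic uniform estimate
\begin{equation}\label{plan-ELambda}
\E\Lambda_t(a)=-\,\E\int_0^t\mathbbm{1}_{(a,+\infty)}\big(y(s)\big)\,b(s)\,\mathrm ds\ \le\ t\,\E|b(0)|,\qquad a\in\mathbb R,\ t>0.
\end{equation}

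For assertion~1, I would fix $j_0\in\mathbb{Z}_0$ with $\varphi_{j_0}\neq0$ and set $y(t)=\Real\skpH{U_\gamma(t)}{e_{j_0}}$ if $j_0>0$, resp.\ $y(t)=\operatorname{Im}\skpH{U_\gamma(t)}{e_{|j_0|}}$ if $j_0<0$; projecting \eqref{eqS-sqrt-gamma} onto $e_{|j_0|}$ and using \eqref{espressione-del-noise}--\eqref{laserie}, this $y$ is a semimartingale \eqref{semimartingale} whose only nonzero diffusion coefficient is the constant $\sqrt\gamma\,|\varphi_{j_0}|$. Applying \eqref{proprieta-local-time} with $h=\mathbbm{1}_{\{0\}}$ annihilates the left-hand side, so $\int_0^t\mathbbm{1}_{\{y(s)=0\}}\,\mathrm ds=0$ a.s.; Fubini and stationarity then give $\Prob(y(0)=0)=0$, whence $\mu_\gamma(\{0\})=\Prob(U_\gamma(0)=0)\le\Prob(y(0)=0)=0$. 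For assertion~2 one runs the same argument with $h=\mathbbm{1}_{[-\delta,\delta]}$ (the hypothesis guarantees $\varphi_{j_1}\neq0$, so the functional $y(t)=\Real\skpH{U_\gamma(t)}{e_{j_1}}$, or $\operatorname{Im}\skpH{U_\gamma(t)}{e_{|j_1|}}$ according to the sign of $j_1$, has constant nonzero noise --- one may equally use a functional of both modes $j_1,j_2$ as in \cite{Sh11}): \eqref{proprieta-local-time} together with \eqref{plan-ELambda} and stationarity yield
\begin{equation*}
\tfrac12\,\gamma\varphi_{j_1}^2\,t\;\Prob\big(|y(0)|\le\delta\big)=\E\!\int_{-\delta}^{\delta}\!\Lambda_t(a)\,\mathrm da\ \le\ 2\,\delta\,t\,\E|b(0)|,
\end{equation*}
hence $\mu_\gamma(\|u\|_H\le\delta)\le\Prob(|y(0)|\le\delta)\le 4(\gamma\varphi_{j_1}^2)^{-1}\E|b(0)|\,\delta$, and since $\E|b(0)|<\infty$ by Proposition \ref{Prop-no-gamma} the choice $c_\Phi:=\gamma\varphi_{j_1}^2\big(1+\|\Phi\|_{L_{HS}(Y,H)}^2\big)/(4\,\E|b(0)|)$ gives \eqref{stima-su-palla}.

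For assertion~3 I would take $y(t)=\mathcal M(U_\gamma(t))$. By the It\^o formula for the mass (the analogue of \eqref{mass-n} for \eqref{eqS-sqrt-gamma}, obtained by passing the Galerkin identity to the limit), $y$ is a semimartingale \eqref{semimartingale} with $b(s)=\gamma\|\Phi\|_{L_{HS}(Y,H)}^2-2\gamma\,\mathcal M(U_\gamma(s))$ and $\theta_j(s)=2\sqrt\gamma\,\Real\skpH{U_\gamma(s)}{\Phi e^Y_j}$, so $\sum_j|\theta_j(s)|^2=4\gamma\,Q(U_\gamma(s))$ with $Q(u):=\sum_{j\in\mathbb{Z}_0}\big|\Real\skpH{u}{\Phi e^Y_j}\big|^2\le\|\Phi\|_{L_{HS}(Y,H)}^2\,\mathcal M(u)$; the integrability hypothesis then holds because $\E\int_0^t\mathcal M(U_\gamma(s))\,\mathrm ds<\infty$. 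Since $\mathcal M\ge0$ we may assume $\Gamma\subseteq[0,+\infty)$; applying \eqref{proprieta-local-time} with $h=\mathbbm{1}_\Gamma$, taking expectations and using stationarity, \eqref{plan-ELambda} and $\E|b(0)|\le 2\gamma\|\Phi\|_{L_{HS}(Y,H)}^2$ (the latter from \eqref{uniform-mass}), the factor $\gamma$ cancels and one gets
\begin{equation}\label{plan-weighted}
\E\Big[\mathbbm{1}_{\{\mathcal M(U_\gamma(0))\in\Gamma\}}\,Q\big(U_\gamma(0)\big)\Big]\ \le\ \|\Phi\|_{L_{HS}(Y,H)}^2\,l(\Gamma).
\end{equation}
For any $\varepsilon>0$, splitting on $\{Q(U_\gamma(0))\ge\varepsilon\}$,
\begin{equation*}
\mu_\gamma\big(\mathcal M(u)\in\Gamma\big)\ \le\ \varepsilon^{-1}\,\E\big[\mathbbm{1}_{\{\mathcal M(U_\gamma(0))\in\Gamma\}}Q(U_\gamma(0))\big]+\Prob\big(Q(U_\gamma(0))<\varepsilon\big)\ \le\ \varepsilon^{-1}\|\Phi\|_{L_{HS}(Y,H)}^2\,l(\Gamma)+q(\varepsilon),
\end{equation*}
where $q(\varepsilon):=\Prob(Q(U_\gamma(0))<\varepsilon)$ is nondecreasing. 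Since $\varphi_j\neq0$ for \emph{every} $j$, one has $Q(u)\ge\varphi_1^2\big(\Real\skpH{u}{e_1}\big)^2$, so $\{Q(U_\gamma(0))<\varepsilon\}\subseteq\{|\Real\skpH{U_\gamma(0)}{e_1}|<\varepsilon^{1/2}|\varphi_1|^{-1}\}$, and by assertion~1 (applied to the mode $1$) the law of $\Real\skpH{U_\gamma(0)}{e_1}$ has no atom at $0$; therefore $q(\varepsilon)\downarrow0$ as $\varepsilon\downarrow0$. Optimizing over $\varepsilon$ gives $\mu_\gamma(\mathcal M(u)\in\Gamma)\le\tilde p(l(\Gamma))$ with $\tilde p(r):=\inf_{\varepsilon>0}(\varepsilon^{-1}\|\Phi\|_{L_{HS}(Y,H)}^2\,r+q(\varepsilon))$, which is finite, nondecreasing, concave and vanishes at $0$; taking $p(r):=\tilde p(r)+\sqrt r$ produces the continuous increasing function required.

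The hard part, as anticipated, is not the probabilistic mechanism --- which is essentially that of \cite{Sh11} --- but making it rigorous for our weak martingale solutions: showing that the $y_\gamma$ are bona fide semimartingales \eqref{semimartingale} with the stated integrability, and in particular that the It\^o formula for $\mathcal M(U_\gamma)$ holds when $U_\gamma$ only solves \eqref{eqn-ItoFormSolution} in $\EAdual$ (this is handled via the Galerkin approximation and \eqref{mass-n}, as elsewhere in the paper). A second point requiring care, in assertion~3, is the passage from the weighted occupation bound \eqref{plan-weighted} to a genuine control of the law of $\mathcal M$: it goes through precisely because the full degeneracy set $\{Q=0\}$ shrinks to $\{0\}$ once all $\varphi_j\neq0$, and $\{0\}$ is $\mu_\gamma$-negligible by assertion~1. (The corresponding statements for the inviscid-limit law $\mu$, i.e.\ Theorem \ref{mass_density}, would then be recovered by letting $\gamma_j\to0$ along the subsequence of Theorem \ref{teoUstaz}, using that \eqref{plan-weighted} is $\gamma$-independent.)
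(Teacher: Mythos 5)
Your proposal does establish the stated (fixed-$\gamma$) theorem, but by a route that differs from the paper's in parts (2) and (3), and the difference is not purely cosmetic. For part (1) you apply \eqref{proprieta-local-time} with $h=\mathbbm{1}_{\{0\}}$ to a single linear mode and read off $\int_0^t\mathbbm{1}_{\{y(s)=0\}}\,{\rm d}s=0$ a.s.; the paper instead bounds $\E\Lambda_t(a)$ to obtain absolute continuity of the law of $\Real(U_\gamma(0),v)_H$ for every admissible $v$. Your version is more economical and yields exactly what is claimed. For part (2) you again use a single linear mode, which needs only one nonzero coefficient and yields a constant of order $\gamma\varphi_{j_1}^2/\E|b(0)|$; the paper applies the local-time identity to $y=\|U_\gamma\|_H$ itself, and the two-mode hypothesis is used to bound the bracket in \eqref{stima-Phi_U} from below by $\min(\varphi_{j_1}^2,\varphi_{j_2}^2)\,\|U_\gamma(0)\|_H^2$, producing $c_\Phi=\min(\varphi_{j_1}^2,\varphi_{j_2}^2)$, independent of $\gamma$. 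For part (3) you derive the same weighted occupation bound as the paper's \eqref{4}, but you remove the weight $Q(U_\gamma(0))$ by the soft dichotomy on $\{Q\ge\varepsilon\}$ together with $\Prob(Q=0)=0$ from part (1); the paper instead lower-bounds the weight by an explicit $\eta(\delta)$ on the quantitative event $\{\|u\|_H\ge\delta,\ \|u\|_V\le\delta^{-1/2}\}$ via the spectral truncation \eqref{inverse-Poincare} (see \eqref{5}), and controls the complementary event using part (2) and \eqref{sti_tight-gamma}, obtaining an explicit modulus $p(r)$.

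What each approach buys: yours is shorter and requires weaker nondegeneracy in part (2), but your constant $c_\Phi$ and your modulus $q(\varepsilon)=\Prob(Q(U_\gamma(0))<\varepsilon)$ depend on $\gamma$ — the former degenerates like $\gamma$, the latter is defined through the law of $U_\gamma$ itself — whereas the paper's $c_\Phi$ and $p(r)$ are uniform over $\gamma\in(0,1]$. For the theorem as literally stated (fixed $\gamma>0$) this is harmless, but it undermines your closing parenthetical: Theorem \ref{mass_density} for the limit law $\mu$ cannot be recovered from your bounds by letting $\gamma_j\to 0$, because $q(\varepsilon)$ comes with no $\gamma$-uniform control; the $\gamma$-independence of the weighted occupation bound alone is not enough. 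This uniformity is precisely what the paper's quantitative detour through part (2) and the estimate \eqref{5} is designed to supply. If this appendix is to feed into Theorem \ref{mass_density}, you should either keep the paper's $\gamma$-free constants or prove $\sup_{0<\gamma\le 1}\Prob(Q(U_\gamma(0))<\varepsilon)\to 0$ separately, which essentially reproduces the paper's argument.
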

\begin{proof}
Following \cite{Sh11}, the strategy consists in using the identities \eqref{def-local-time} and \eqref{proprieta-local-time} for  stationary processes $y, b, \theta_j$ and  choosing $h=1_\Gamma$ so that taking the mathematical expectation in  \eqref{proprieta-local-time} we get 
\[
\int_\Gamma \mathbb E  \Lambda_t(a){\rm d}a=
\mathbb E  \int_{-\infty}^{+\infty} h(a) \Lambda_t(a){\rm d}a= 
\frac 12  t\mathbb E \left( 1_\Gamma(y(0)) \sum_{j \in{\mathbb Z}_0}  | \theta_j(0) |^2  \right)
\]
and from \eqref{def-local-time} we get
\[
\mathbb E \Lambda_t(a)= - \mathbb E\int_0^t \pmb{1}_{(a,+\infty)}(y(s)) b(s) {\rm d}s
   =-t \mathbb E\left( \pmb{1}_{(a,+\infty)}(y(0)) b(0) \right).
\]
Hence, integrating the latter identity on $\Gamma$ so to get 
$\int_\Gamma \mathbb E  \Lambda_t(a){\rm d}a$ in both left-hand sides, 
we obtain
\begin{equation}\label{identita-L-T}
\frac 12  \mathbb E \big[\pmb{1}_\Gamma(y(0))  \sum_{j \in{\mathbb Z}_0}   | \theta_j(0) |^2  \big]
=
-\int_\Gamma \mathbb E\big[ \pmb{1}_{(a,+\infty)}(y(0)) b(0) \big]{\rm d}a.
\end{equation}
{\bf Proof of 1.}
\\
Let $\varphi_j\neq 0$ and choose an element $v\in \EA$  such that $\text{Re} (e_j,v)_H\neq 0$ if $j>0$ or
 $\text{Im} (e_{-j},v)_H\neq 0$ if $j<0$.
 Set $y(t)=\text{Re} (U_\gamma(t),v)_H$.
The differential equation fulfilled by $y$ is of the form \eqref{semimartingale} with
\[
b(t)= \text{Re} \langle \im AU_\gamma(t)-\gamma U_\gamma(t)-i \alpha F(U_\gamma(t)), v \rangle
\]
and $\theta_j $, independent of time, given by
\[
\begin{split}
\theta_j=\sqrt \gamma \varphi_j Re (e_j,v)_H\qquad & \text{ for } j>0
\\
\theta_j=-\sqrt \gamma \varphi_j Im (e_{-j},v)_H\;& \text{ for } j<0
\end{split}
\]
so
\[
 \sum_{j \in{\mathbb Z}_0}   | \theta_j |^2 
 =\gamma\sum_{j=1}^\infty \left(\varphi_j^2[ \text{Re}  (e_j,v)_H]^2 + \varphi_{-j}^2[ \text{Im}  (e_j,v)_H]^2\right)>0.
\]
Therefore from \eqref{identita-L-T} we obtain
\[\begin{split}
\mathbb P(y(0)\in \Gamma)
&=-\frac 2 {\gamma\sum_{j=1}^\infty \left(\varphi_j^2[ \text{Re}  (e_j,v)_H]^2 + \varphi_{-j}^2[ \text{Im}  (e_j,v)_H]^2\right)}
  \int_\Gamma \mathbb E\big[ \pmb{1}_{(a,+\infty)}(y(0)) b(0) \big]{\rm d}a
\\
&\le \frac 2 {\gamma\sum_{j=1}^\infty \left(\varphi_j^2[ \text{Re}  (e_j,v)_H]^2 + \varphi_{-j}^2[ \text{Im}  (e_j,v)_H]^2\right)} \mathbb E(|b(0)|) l(\Gamma).
\end{split}
\]
Now,  exploiting estimates \eqref{tildeH-domina},\eqref{stimaF},  \eqref{finaleMp} and \eqref{finale-energy}, we deal with the term depending on $b$
\[\begin{split}
\mathbb{E}_\gamma \left[|b(0)|\right]
&=\mathbb{E}_\gamma\left[|\text{Re} \langle \im A^{\frac 12}U_\gamma(0), A^{\frac 12}v\rangle-\text{Re}\langle \gamma U_\gamma(0), v \rangle-\text{Re}\langle i \alpha F(U_\gamma(0)), v \rangle|\right]
\\
&\le \mathbb{E}_\gamma\left[\|U_\gamma(0)\|_V\|v\|_V+\gamma \|U_\gamma(0)\|_H\|v\|_H+\|F(U_\gamma(0)) \|_{L^{\frac{2\sigma+2}{2\sigma+1}}} \|v\|_{L^{2\sigma+2}}\right]
\\
&\lesssim \|v\|_\EA \left( \mathbb{E}_\gamma\left[\|U_\gamma(0)\|_V\right]
+\gamma\mathbb{E}_\gamma\left[ \|U_\gamma(0)\|_H\right] + \mathbb{E}_\gamma\left[\|U_\gamma(0)\|_{L^{2\sigma+2}}^{2\sigma+1}\right]\right)
\end{split}\]
which is bounded by $ \|v\|_\EA(C_1+\gamma C_2)$ according to Proposition \ref{Prop-no-gamma}.
Therefore
\[
\mathbb P(\text{Re} (U_\gamma(0),v)_H \in \Gamma)\le
\frac  { \|v\|_\EA(C_1+\gamma C_2)} {\gamma\sum_{j=1}^\infty \left(\varphi_j^2[ \text{Re}  (e_j,v)_H]^2 + \varphi_{-j}^2[ \text{Im}  (e_j,v)_H]^2\right)} l(\Gamma)
\]
and we conclude that the law of the  random variable
$\text{Re} (U_\gamma(t),v)_H$
 is absolutely continuous with respect to the Lebesgue measure. 
% {\color{blue}Similarly one argues for $y(t)=\text{Im} (U_\gamma(t),v)_H$.}
 In particular  $\mu_\gamma$  has no atom in $0$; indeed, if $\varphi_j\neq 0$ for $j>0$, by choosing $v=e_j$ 
 we obtain that  
 $\mathbb P_\gamma(U_\gamma(0)=0)\le \mathbb P_\gamma( (\text{Re }U_\gamma(t),e_j)_H=0)=0$ (and similarly we proceed if $\varphi_j\neq 0$ for $j<0$ by choosing $v=\im e_{-j}$ getting the result for $(\text{Im }U_\gamma(t),e_j)_H$).
  \\
{\bf Proof of 2.}\\
We deal with $y(t)=\|U_\gamma(t)\|_H$; 
therefore  equation \eqref{semimartingale}, under the condition that $y$ does not vanish,  is now
\begin{multline}
 y(t)=y(0)+\int_0^t[-\gamma  \|U_\gamma(s)\|_H+\gamma\frac{\|\Phi\|_{L_{HS}(Y,H)}^2}{2 \|U_\gamma(s)\|_H}] {\rm d}s 
 +\sqrt \gamma\int_0^t\frac{Re \langle  U_\gamma(s) ,\Phi dW(s)\rangle}{ \|U_\gamma(s)\|_H}
\\ 
-\frac \gamma{2}   \sum_{j=1}^\infty \int_0^t   \frac{
[ (\text{Re } U_\gamma(s),e_j)_H^2\varphi_j ^2+  (\text{Im } U_\gamma(s),e_j)_H^2\varphi_{-j} ^2]}{ \|U_\gamma(s)\|_H^3}{\rm d}s
\end{multline}
Choosing $\Gamma=(\alpha,\beta)$ with $0<\alpha<\beta$, relation \eqref{identita-L-T} becomes
\begin{multline}
- {\mathbb E}_\gamma \int_\alpha^\beta \pmb{1}_{(a,+\infty)}(\|U_\gamma(0)\|_H) 
 \Big( - \|U_\gamma(0)\|_H+\frac{\|\Phi\|_{L_{HS}(Y,H)}^2}{2 \|U_\gamma(0)\|_H} 
\\ - \frac 1{2 \|U_\gamma(0)\|_H^3}  \sum_{j=1}^\infty  [ (\text{Re } U_\gamma(0),e_j)_H^2\varphi_j ^2+  (\text{Im } U_\gamma(0),e_j)_H^2\varphi_{-j} ^2] \Big)
 {\rm d}a
\\=
\frac 12  \mathbb E \left[\pmb{1}_{(\alpha,\beta)}(\|U_\gamma(0)\|_H)
   \sum_{j=1}^\infty \frac{ (\text{Re } U_\gamma(0),e_j)_H^2\varphi_j ^2+  (\text{Im } U_\gamma(0),e_j)_H^2\varphi_{-j} ^2}{ \|U_\gamma(0)\|^2_H}\right]
\end{multline}
Hence, neglecting  the r.h.s. which is not negative, we get
\begin{multline*}
\mathbb E_\gamma \int_\alpha^\beta \frac{\pmb{1}_{(a,+\infty)}(\|U_\gamma(0)\|_H) }{2\|U_\gamma(0)\|_H^3}
 \left(\|\Phi\|_{L_{HS}(Y,H)}^2\|U_\gamma(0)\|_H^2- \sum_{j=1}^\infty   [ (\text{Re } U_\gamma(0),e_j)_H^2\varphi_j ^2+  (\text{Im } U_\gamma(0),e_j)_H^2\varphi_{-j} ^2] \right){\rm d}a
\\
 \le 
 (\beta-\alpha)\mathbb E_\gamma \|U_\gamma(0)\|_H .
\end{multline*}
We estimate the r.h.s. from above by using  \eqref{uniform-mass}
\[
\mathbb E_\gamma \|U_\gamma(0)\|_H\le \frac 12 +\frac 12 \mathbb E \|U_\gamma(0)\|_H^2=\frac 12 
+\frac 12 \|\Phi\|_{L_{HS}(Y,H)}^2,
\]
and the  l.h.s. from below by using
the assumption that $\varphi_{j_1}\ne 0 $ and $\varphi_{j_2}\ne 0$.
Indeed, recalling that $\|\Phi\|_{L_{HS}(Y,H)}^2=\sum_{j=1}^\infty [\varphi_j ^2+\varphi_{-j} ^2]$ and 
$\|U\|_H^2=\sum_{j=1}^\infty [(\text{Re }U,e_j)_H^2+(\text{Im }U,e_j)_H^2]$, we get
\begin{equation}\label{stima-Phi_U}\begin{split}
\|\Phi\|_{L_{HS}(Y,H)}^2 & \|U_\gamma(0)\|_H^2- \sum_{j=1}^\infty 
[ (\text{Re } U_\gamma(0),e_j)_H^2\varphi_j ^2+  (\text{Im } U_\gamma(0),e_j)_H^2\varphi_{-j} ^2]
\\ &=
(\sum_{j=1}^\infty \varphi_{j} ^2 ) (\sum_{j=1}^\infty  (\text{Im } U_\gamma(0),e_j)_H^2 )+
(\sum_{j=1}^\infty \varphi_{-j} ^2 ) (\sum_{j=1}^\infty  (\text{Re } U_\gamma(0),e_j)_H^2 )
\\&\quad 
+\left[(\sum_{j=1}^\infty \varphi_{j} ^2 ) (\sum_{j=1}^\infty  (\text{Re } U_\gamma(0),e_j)_H^2 ) 
-\sum_{j=1}^\infty 
 (\text{Re } U_\gamma(0),e_j)_H^2\varphi_j ^2\right]
 \\&\quad 
 +\left[(\sum_{j=1}^\infty \varphi_{-j} ^2 ) (\sum_{j=1}^\infty  (\text{Im } U_\gamma(0),e_j)_H^2 ) 
-\sum_{j=1}^\infty 
 (\text{Im } U_\gamma(0),e_j)_H^2\varphi_{-j}^2\right]
\end{split}\end{equation}

Both terms in square brackets are not negative. Moreover, 
 if $j_1>0$ and $j_2>0$ we get
\[\begin{split}
(\sum_{j=1}^\infty \varphi_{j} ^2 ) (\sum_{j=1}^\infty  (\text{Re } U_\gamma(0),e_j)_H^2 ) 
-\sum_{j=1}^\infty 
 (\text{Re } U_\gamma(0),e_j)_H^2\varphi_j ^2
&=\sum_{j=1}^\infty 
 (\text{Re } U_\gamma(0),e_j)_H^2(\sum_{h\neq j} \varphi_h ^2)
\\
&\ge( \min (\varphi^2_{j_1},\varphi^2_{j_2})) \sum_{j=1}^\infty 
 (\text{Re } U_\gamma(0),e_j)_H^2
\end{split}\]
Hence, neglecting the second and the forth addend in  the r.h.s of \eqref{stima-Phi_U} we get
\[\begin{split}
\|\Phi\|_{L_{HS}(Y,H)}^2 & \|U_\gamma(0)\|_H^2- \sum_{j=1}^\infty 
[ (\text{Re } U_\gamma(0),e_j)_H^2\varphi_j ^2+  (\text{Im } U_\gamma(0),e_j)_H^2\varphi_{-j} ^2]
\\ &\ge
( \min (\varphi^2_{j_1},\varphi^2_{j_2}))\|U_\gamma(0)\|_H^2.
\end{split}\]
Similarly if  $j_1<0$ and $j_2<0$.
However, if $j_1j_2<0$ we get the same estimate by neglecting the two latter addends in  the r.h.s of \eqref{stima-Phi_U} and estimating the two first ones in an elementary  way.

We set $c_\Phi= \min (\varphi^2_{j_1},\varphi^2_{j_2})>0$.
Summing up, we have obtained that
\[
\mathbb E_\gamma \int_\alpha^\beta \frac{1_{(a,+\infty)}(\|U_\gamma(0)\|_H) }{\|U_\gamma(0)\|_H}{\rm d}a
\le \frac{1+ \|\Phi\|_{L_{HS}(Y,H)}^2}{c_\Phi} (\beta-\alpha)
\]
 and letting  $\alpha$ vanish
\begin{equation} \label{stima-in-ii}
\mathbb{E}_\gamma\left[ \int_0^\beta \frac{\pmb{1}_{(a, + \infty)}(\|U_\gamma(0)\|_H)}{\|U_\gamma(0)\|_H}\, {\rm d}a\right] \le \frac{\left(1+ \|\Phi\|_{L_{HS}(Y,H)}^2\right)}{c_\Phi}  \beta.
\end{equation}
Now, for a fixed $\delta>0$   we have
\[
\int_0^\beta \mathbb{P}(a< \|U_\gamma\|_H\le \delta)\, {\rm d}a
=
\mathbb{E}_\gamma\left[ \int_0^\beta \pmb{1}_{(a, \delta]}(\|U_\gamma\|_H)\, {\rm d}a\right]
\le \delta
 \mathbb{E}_\gamma\left[ \int_0^\beta \frac{\pmb{1}_{(a, \delta]}(\|U_\gamma\|_H)}{\|U_\gamma\|_H}\, {\rm d}a\right]. 
\]
Estimating the r.h.s. by means of \eqref{stima-in-ii}
we get
\[
\frac 1\beta \int_0^\beta \mathbb{P}(a< \|U_\gamma\|_H\le \delta)\, {\rm d}a
\le \frac{1+ \|\Phi\|_{L_{HS}(Y,H)}^2}{c_\Phi} \ \delta .
\] 
Passing to the limit as $\beta \rightarrow 0^+$ and recalling from the previous point (1) that $\mu_\gamma$ has no atom  in $0$, we obtain \eqref{stima-su-palla}.
\\
{\bf Proof of 3.}
\\
We deal with $y(t)=\|U_\gamma(t)\|_H^2=\mathcal M(U_\gamma(t))$;
therefore  equation \eqref{semimartingale} is now
\begin{multline*}
{\rm d}y(t)=\gamma(- 2  y(t)+
\|\Phi \|^2_{L_{HS}(Y,H)})\, {\rm d}t
\\+ 2\sqrt \gamma\sum_j [ \varphi_j (\text{Re }U_\gamma(t), e_j)_H\,{\rm d}W_j(t)
+\varphi_{-j} (\text{Im }U_\gamma(t), e_j)_H\,{\rm d}W_{-j}(t)]
\end{multline*}
and relation \eqref{identita-L-T} becomes
\begin{multline}
 \frac 12   \mathbb{E}_\gamma \left[ \pmb{1}_\Gamma (\|U_\gamma(0)\|_H^2) 4 \sum_{j=1}^\infty 
    [ (\text{Re } U_\gamma(0),e_j)_H^2\varphi_j ^2+  (\text{Im } U_\gamma(0),e_j)_H^2\varphi_{-j} ^2]  \right] 
\\=-
\mathbb{E}_\gamma \left[\int_\Gamma\pmb{1}_{(a, +\infty)}(\|U_\gamma(0)\|_H^2)
  [- 2  \|U_\gamma(0)\|_H^2+
\|\Phi \|^2_{L_{HS}(Y,H)}]  {\rm d}a \right] .
\end{multline}
Hence 
\begin{equation}
\label{4}
\begin{split}
 \mathbb{E}_\gamma \left[ \pmb{1}_\Gamma (\|U_\gamma(0)\|_H^2) \sum_{j=1}^\infty 
      [ (\text{Re } U_\gamma(0),e_j)_H^2\varphi_j ^2+  (\text{Im } U_\gamma(0),e_j)_H^2\varphi_{-j} ^2] \right]
    &\le  l(\Gamma)\mathbb{E}_\gamma \left[\|U_\gamma(0)\|^2_H\right] 
\\&= \frac{l(\Gamma)\|\Phi\|^2_{L_{HS}(Y,H)}}{2}.
\end{split}\end{equation}

We estimate the l.h.s. of the above inequality from below. 
Recall from \eqref{productHb} that
\[
\|u\|_\EA^2= \|u\|_H^2+\|A^{\frac 12} u\|_H^2= \sum_{j=1}^\infty (1+\lambda_j^\beta) |(u,e_j)_H|^2.
\]
Hence
\begin{equation}\label{inverse-Poincare}
\sum_{j=1}^N |(u,e_j)_H|^2= \sum_{j=1}^\infty |(u,e_j)_H|^2 -  \sum_{j=N+1}^\infty|(u,e_j)_H|^2
\ge \|u\|_H^2 - \frac 1{1+\lambda_{N+1}^\beta} \|u\|_\EA^2.
\end{equation}

Therefore, 
given $\delta > 0$, if $\|U_\gamma\|_H \ge \delta$ and $\|U_\gamma\|_V\le \frac{1}{\sqrt \delta}$, 
by mean of \eqref{inverse-Poincare} we infer 
\begin{equation*}\begin{split}
 \sum_{j=1}^\infty &
      [ (\text{Re } U_\gamma(0),e_j)_H^2\varphi_j ^2+  (\text{Im } U_\gamma(0),e_j)_H^2\varphi_{-j} ^2]
\\&
\ge c_N^\Phi\sum_{k=1}^N|( U_\gamma, e_k)_H| ^2
%=b^2_{min}\left(\|U_\gamma\|^2_H-\sum_{ k > N}|\text{Re}\langle U_\gamma, e_k\rangle| ^2\right)
\ge c_N^\Phi \left(\|U_\gamma\|^2_H-\frac{1}{1+\lambda_{N+1}^\beta}\| U_\gamma \|^2_V\right) 
\ge c_N^\Phi\left(\delta^2-\frac{1}{1+\lambda_{N+1}^\beta}\frac 1 {\delta} \right),
\end{split}
\end{equation*}
where $c_N^\Phi:=\left(\min_{|k|\le N}\varphi_k^2 \right)$.  Notice that $c_N^\Phi\to 0 $ as $N\to+\infty$, since
from Assumption \ref{ass_G} we know that $(1+ \lambda_j^\beta) \varphi_j^2 \le \|\Phi\|^2_{L_{HS}(Y,\EA)}$ for any $j$ and therefore $c_N^\Phi \le  \varphi_N^2\le  \|\Phi\|_{L_{HS}(Y,\EA)}^2 \lambda_N^{-\beta}$ with $\lambda_N \to +\infty$ as $N \to +\infty$.

Choosing $N=N(\delta)$  such that $1+\lambda_{N(\delta)+1}^\beta> \frac 2{\delta^3}$ 
we find
\[
 \sum_{j=1}^\infty 
      [ (\text{Re } U_\gamma(0),e_j)_H^2\varphi_j ^2+  (\text{Im } U_\gamma(0),e_j)_H^2\varphi_{-j} ^2]
\ge c_{N(\delta)}^\Phi  \frac{\delta^2}2.
\]
Defining
$\eta(\delta)=\frac 12 c_{N(\delta)}^\Phi\delta^2$, we have that $\eta(\delta)$ goes to zero with $\delta$, and
when  $\|U_\gamma(0)\|_H \ge \delta$ and $ \|U_\gamma(0)\|_V \le \frac{1}{\sqrt \delta}$ we get
\begin{equation}
\label{5}
 \sum_{j=1}^\infty 
      [ (\text{Re } U_\gamma(0),e_j)_H^2\varphi_j ^2+  (\text{Im } U_\gamma(0),e_j)_H^2\varphi_{-j} ^2]
      \ge \eta(\delta).
\end{equation}

We now define the event $G_\delta:= \{\|U_\gamma\|_H \le \delta \ \text{or} \ \|U_\gamma\|_V \ge \frac{1}{\sqrt \delta}\}$. From the Chebychev inequality,  the previous point (2) and estimate \eqref{sti_tight-gamma}, we infer
\begin{align*}
\mathbb{P}(G_\delta)\le \mathbb{P}(\|U_\gamma\|_H \le \delta)+\mathbb{P}\left(\|U_\gamma\|_V\ge \frac{1}{\sqrt \delta}\right)
 \le \delta C_\phi+ \delta \mathbb{E}\left[\|U_\gamma\|^2_V\right]
 \\
  \lesssim \delta (C_\phi +1+ \phi_{1}(d,\beta, \sigma,1, \Phi)) =:\delta C.
\end{align*}
Combining the above estimate with \eqref{4} and \eqref{5}, by means of the Chebychev inequality we infer that, for any borelian set $\Gamma$ of $\mathbb{R}$,
\begin{align*}
\mathbb{P}(\mathcal{M}(U_\gamma) \in \Gamma)
&=\mathbb{P}(\{\mathcal{M}(U_\gamma) \in \Gamma\} \cap G_\delta) + \mathbb{P}(\{\mathcal{M}(U_\gamma) \in \Gamma\} \cap G_\delta^c)
\\
&\le C\delta +\frac{1}{\eta(\delta)}\mathbb{E}_\gamma \left[\pmb{1}_\Gamma(\mathcal{M}(U_\gamma))
 \sum_{j=1}^\infty 
      [ (\text{Re } U_\gamma(0),e_j)_H^2\varphi_j ^2+  (\text{Im } U_\gamma(0),e_j)_H^2\varphi_{-j} ^2]\right]
\\
&\le C\delta + \frac{l(\Gamma)\|\Phi\|^2_{L_{HS}(Y,H)}}{2 \eta(\delta)},
\end{align*}
and the thesis follows by setting 
\[
p(r)=\inf_{\delta>0} \left(  C\delta+\frac{\|\Phi\|^2_{L_{HS}(Y,H)}}{2 \eta(\delta)}  r\right).
\]
\end{proof}

\begin{remark}
The difference of our point (2)  with respect to that in \cite{Sh11} is that  our result  (2) holds true 
if  at least two coefficients do not vanish without any relation between $j_1$ and $j_2$, whereas Shirikyan \cite{Sh11} considered $j_2=-j_1$.
\end{remark}

%%%%%%%%%%%%%%%%%%%%%%%%%%%%%%%%%%%%
\subsection*{Acknowledgements}

The authors are members of Gruppo Nazionale per l’Analisi Matematica, la Probabilità e le loro Applicazioni (GNAMPA) of the Istituto Nazionale di Alta Matematica (INdAM), and of the Center CAMRisk at the Department of Economics and Management of the University of Pavia.
 Moreover they  gratefully acknowledge financial support through the projects CUP$-$E55F22000270001 %(Scarpa) 
and 
CUP$-$E53C22001930001. %(Zanella)

\subsection*{Declarations}
There is no conflict of interest. The authors have no relevant financial or non-financial interests to disclose.

%%%%%%%%%%%%%

\end{document}